\definecolor{grau}{rgb}{0.3,0.3,0.3}
\newtheorem{theorem}{Theorem}[section]
\newtheorem{corollary}[theorem]{Corollary}
\newtheorem{lemma}[theorem]{Lemma}
\newtheorem{proposition}[theorem]{Proposition}
\theoremstyle{definition}
\newtheorem{remark}[theorem]{Remark}
\numberwithin{equation}{section}
\title{Integral points on Hilbert moduli schemes} 
\author{  Rafael von K\"anel  \  \textnormal{ and } \   Arno Kret
}
\newcommand{\OL}{\mathcal O}
\newcommand{\iS}{J}
\newcommand{\Pic}{\textup{Pic}}
\newcommand{\uM}{\textup{M}}
\newcommand{\isomto}{\overset \sim \to}
\newcommand{\GL}{{\textup {GL}}}
\newcommand{\idf}{{\mathfrak f}} 
\newcommand{\ZZ}{\mathbb Z}
\newcommand{\Z}{\mathbb Z}
\newcommand{\Aut}{\textup{Aut}}
\newcommand{\Q}{\mathbb Q}
\newcommand{\R}{\mathbb R}
\newcommand{\Hom}{\textup{Hom}}
\newcommand{\spec}{\textnormal{Spec}}
\newcommand{\QQ}{\mathbb Q}
\newcommand{\End}{\textup{End}}
\newcommand{\ces}{(ES)}
\newcommand{\absg}{\underline{A}_g}
\newcommand{\hilbmod}{M}
\newcommand{\abomult}{\underline{M}}
\newcommand{\order}{\Gamma}
\newcommand{\sch}{\textnormal{(Sch)}}
\newcommand{\sets}{\textnormal{(Sets)}}
\newcommand{\SL}{\textnormal{SL}}
\newcommand{\CC}{\mathbb C}
\newcommand{\RR}{\mathbb R}
\definecolor{darkgreen}{rgb}{0.0, 0.4, 0.26}
\begin{document}

\maketitle

\vspace{0.5cm}

\begin{abstract}
We use the method of Faltings (Arakelov, Par\v{s}in, Szpiro) in order to explicitly study integral points on a class of varieties over $\ZZ$ called Hilbert moduli schemes. For instance, integral models of Hilbert modular varieties are classical examples of Hilbert moduli schemes. Our main result gives explicit upper bounds for the height and the number of integral points on Hilbert moduli schemes.
\end{abstract}

\vspace{1cm}

\tableofcontents
\vspace{0.5cm}

\section{Introduction}

In this paper we explicitly study integral points on a class of varieties over $\ZZ$ called Hilbert moduli schemes. We now to try to briefly explain and motivate the notion of Hilbert moduli schemes. Let $L/\QQ$ be a totally real number field of degree $g$ with ring of integers $\OL$.


\paragraph{Hilbert moduli schemes.}  Intuitively one can think of a Hilbert moduli scheme as a variety whose points parametrize pairs $(A,\alpha)$ where $A$ is a `polarized' abelian variety of dimension $g$ with $\OL$-multiplication and $\alpha$ lies in a certain set $\mathcal P(A)$ associated to $A$. For example, if we take the set $\mathcal P(n)(A)=\{(\OL/n\OL)^2\cong A_n\}$ of principal level $n$-structures on $A$ for some $n\in\ZZ_{\geq 3}$, then there is a quasi-projective scheme $Y(n)$ over $\ZZ[1/n]$ parametrizing $(A,\alpha)$ with $\alpha\in \mathcal P(n)(A)$. Moreover a connected component of $Y(n)(\CC)$ identifies with $\mathbb H^g/\Gamma(n)$, where $\Gamma(n)=\ker(\SL_2(\OL)\to \SL_2(\OL/n\OL))$ acts on $\mathbb H^g$ via the $g$ distinct real embeddings of $L$ and the action of $\SL_2(\RR)$ on $\mathbb H=\{z\in\CC; \, \textnormal{im}(z)>0\}$. 
There are many interesting examples of varieties in the literature which are (birationally equivalent to) Hilbert moduli schemes: For instance \cite{hiva:hmsclass,hiza:hmsclass,vandergeer:hilbertmodular} contain many surfaces of general type and also surfaces which are rational, elliptic over $\mathbb P^1$, or blown-up K3. 

Further, it was shown in \cite{rvk:intpointsmodellhms} for $g=1$  that the general moduli formalism allows to explicitly study  a class of Diophantine equations of interest which is a priori substantially more general than just defining equations\footnote{Here by defining equations we mean equations which define a variety over $\QQ$ whose set of complex points has a connected component which identifies with $\mathbb H^g/\Gamma$.} of $\mathbb H^g/\Gamma$ for certain subgroups $\Gamma$ of $\SL_2(\OL)$. This motivates to work for any $g\geq 1$ with the following general notion: Let $\mathcal M$ be the Hilbert moduli stack associated to $L$ by Rapoport~\cite{rapoport:hilbertmodular} and Deligne--Pappas~\cite{depa:hilbertmodular}, see Section~\ref{sec:hms}. We say that a scheme $Y$ is a Hilbert moduli scheme of a presheaf $\mathcal P$ on $\mathcal M$, and we write $Y=M_{\mathcal P}$, if $\mathcal P$ is representable by an object in $\mathcal M(Y)$. In the case $L=\QQ$, the Hilbert moduli stack $\mathcal M$ identifies with the moduli stack $\mathcal M_{1,1}$ of elliptic curves and the Hilbert moduli schemes are precisely the moduli schemes of elliptic curves in Katz--Mazur~\cite[4.3.1]{kama:moduli}. For example, if $Y\hookrightarrow \mathbb A^2_\ZZ$ is defined by $y^2=x^3+a$ with $a\in\ZZ$ nonzero, then $Y$ becomes over $\ZZ[1/\nu]$, $\nu=6a$, a Hilbert moduli scheme of a presheaf $\mathcal P_a$ on $\mathcal M_{1,1}$ such that  $|\mathcal P_a(A)|\leq 24$ for each $A\in\mathcal M_{1,1}(\CC)$; see \cite[$\mathsection 3.2.2$]{rvk:intpointsmodellhms}.



\subsection{Results} We continue our notation. Let $Y$ be a variety\footnote{A variety $Y$ over a ring $R$ is an $R$-scheme which is separated and of finite type.} over $\ZZ$, and let $\ZZ_S\supseteq \ZZ$ be the ring of $S$-integers in $\QQ$ where $S$ is a finite set of rational primes. 
Faltings~\cite{faltings:finiteness} gives  the following  finiteness result (see Section~\ref{sec:generalfin}): The set $Y(\ZZ_S)$ is finite if $Y$ becomes over a ring $\ZZ[1/\nu]$, $\nu\in\ZZ_{\geq 1}$,  a Hilbert moduli scheme of a presheaf $\mathcal P$ on $\mathcal M$ with $|\mathcal P|_{\CC}<\infty$, where $$|\mathcal P|_{\CC}=\sup_{A\in\mathcal M(\CC)}|\mathcal P(A)|$$ is the maximal number of $\mathcal P$-level structures over $\CC$. For those $\mathcal P$ of interest in arithmetic, it is usually not difficult to compute $\nu=\nu(Y)$ and to show $|\mathcal P|_{\CC}<\infty$. For instance it holds $|\mathcal P(n)|_{\CC}\leq n^{4g}$ and $|\mathcal P_a|_{\CC}\leq 24$, and we can take $\nu=n$  for $\mathcal P(n)$ and $\nu=6a$ for $\mathcal P_a$. 

In this paper we prove a fully explicit version of the above finiteness result for $Y(\ZZ_S)$. More precisely, if we write $N_S=\prod_{p\in S}p$ where $N_S=1$ when $S$ is empty, then our main result (Theorem~\ref{thm:main}) is a slightly more general version of the following theorem.

\vspace{0.3cm}
\noindent{\bf Theorem A.}
\emph{Suppose that $Y$ is a variety over $\ZZ$, which becomes over a ring $\ZZ[1/\nu]$, $\nu\in\ZZ_{\geq 1}$, a Hilbert moduli scheme of some presheaf $\mathcal P$ on $\mathcal M$. Then the following holds.
\begin{itemize}
\item[(i)] Any point $P\in Y(\ZZ_S)$ satisfies $h_\phi(P)\leq (3g)^{144g}(\nu N_S)^{24}.$ 
\item[(ii)] If $e_g=(8g)^{8}$ then $|Y(\ZZ_S)|\leq |\textnormal{Pic}(\OL)||\mathcal P|_\CC(2\nu N_S)^{e_g}.$ 
\end{itemize}}
\vspace{0.1cm}
\noindent Here $h_{\phi}:Y(\bar{\QQ})\to \RR$ is a height, defined in \eqref{def:height}, which depends on the choice of $\mathcal P$ and which has the Northcott property if $|\mathcal P|_\CC<\infty$, see Section~\ref{sec:hms2}. However the  bound for $h_\phi$ in (i) can also be useful when $|\mathcal P|_\CC=\infty$: For example cubic Thue--Mahler equations define   moduli schemes $Y=M_{\mathcal P}$ with $|\mathcal P|_\CC=\infty$, see \cite[p.60]{vkma:computation}. We remark that the case $g=1$ of Theorem~A was established in \cite[Thm 7.1]{rvk:intpointsmodellhms}. Further, we tried to simplify the form of our bounds. In fact the exponents in Theorem~A can be improved up to a certain extent, especially when one restricts to specific moduli schemes. 

\paragraph{Applications.}While there are many explicit finiteness results for integral points on curves, much less is known for higher dimensional varieties. In light of this, an interesting aspect of Theorem~A is that it can be applied to explicitly study integral points on certain classical varieties for which no effective method is known. 
To illustrate this, we take an ideal $\mathfrak n\subseteq \OL$ of norm $n\geq 1$ and  we consider the presheaf $\mathcal P_1(\mathfrak n)$ on $\mathcal M$ defined in Section~\ref{sec:p1n} following \cite{kama:moduli,pappas:hilbmod}: It parametrizes $\mathfrak n$-torsion points of `exact\footnote{An $\mathfrak n$-torsion point $P$ has exact order $\mathfrak n$ if its annihilator $\textnormal{Ann}_\OL (P)$ equals $\mathfrak n$.} order $\mathfrak n$',  and for $g=1$ it is the $\Gamma_1(n)$-moduli problem of \cite{kama:moduli}.  Suppose now that $\mathcal P_1(\mathfrak n)$ is representable over $\ZZ[1/n]$ with Hilbert moduli scheme $Y_1(\mathfrak n)=M_{\mathcal P_1(\mathfrak n)}$ a (quasi-projective) variety over $\ZZ[1/n]$ of relative dimension $g$. Theorem~A and $|\mathcal P_1(\mathfrak n)|_\CC\leq n^{2g}$ then lead to the following:


\vspace{0.3cm}
\noindent{\bf Corollary.}
\emph{Let $Y$ be a variety over $\ZZ$, which becomes over $\ZZ[1/n]$ isomorphic to $Y_1(\mathfrak n)$.
\begin{itemize}
\item[(i)] Any point $P\in Y(\ZZ_S)$ satisfies $h_\phi(P)\leq (3g)^{144g}(nN_S)^{24}$.
\item[(ii)] The cardinality of $Y(\ZZ_S)$ is at most $|\textnormal{Pic}(\OL)|(2nN_S)^{e_g}.$
\end{itemize} }
\vspace{0.1cm}

\noindent  As far as we know, this is the first explicit finiteness result for $Y_1(\mathfrak n)(\ZZ_S)$ when $Y_1(\mathfrak n)$ is a surface or has higher dimension. However when $Y_1(\mathfrak n)$ is a curve, other methods produced much stronger results (write $n=\mathfrak n$):  For example, if $n\geq 4$ then $\mathcal P_1(n)$ is representable over $\ZZ[1/n]$ and Bilu \cite{bilu:israel,bilu:modular} and Sha~\cite{sha:intmodimrn,sha:intmod2} obtained strong effective height bounds for the set $j^{-1}(\ZZ_S)\subseteq Y_1(n)(\QQ)$ which contains $Y_1(n)(\ZZ_S)$ for $j:Y_1(n)\to \mathbb A^1_{\ZZ[1/n]}$ the $j$-map. Their results hold moreover for points defined over arbitrary number fields $K$ and for quite general classes of modular curves. Furthermore, $Y_1(n)$ has no $\QQ$-rational point when $n=11$ or $n\geq 13$ by Mazur~\cite{mazur:eisenstein}.
Theorem~A holds only for $K=\QQ$, and a priori only for  $S$-integral points. However, it implies the corollary  more generally (\cite{vkkr:repcond}) for integral models $Y_\Gamma$ of any representable Hilbert modular variety of dimension $g$ associated to a congruence subgroup $\Gamma\subset \textnormal{SL}_2(\OL)$. This is particularly interesting for $g\geq 2$, 
since it is notoriously difficult to obtain effective finiteness for integral points on higher dimensional varieties; see for example Levin~\cite{levin:intpointsrunge,levin:intpointslogforms} and Le Fourn~\cite{lefourn:A2rungeparis,lefourn:A2runge,lefourn:tubularbaker}. 

Moreover, Theorem~A allows to explicitly study Diophantine equations which are not necessarily defining equations of $\mathbb H^g/\Gamma$. For example, it was shown in \cite{rvk:intpointsmodellhms} that the case $g=1$ of (i) gives explicit Weil height bounds for the solutions of several classical Diophantine equations defining moduli schemes of elliptic curves: Besides Mordell equations $y^2=x^3+a$ via $\mathcal P_a$ (\cite[Cor 7.4]{rvk:intpointsmodellhms}) and related equations such as cubic Thue, cubic Thue--Mahler and generalized Ramanujan--Nagell equations (K.--Matschke~\cite[$\mathsection$8,9]{vkma:computation}), this also works for $S$-unit equations (\cite[Cor 7.2]{rvk:intpointsmodellhms} and independently  Murty--Pasten~\cite[Thm 1.1]{mupa:modular}; see also Frey~\cite[p.544]{frey:ternary}). Furthermore, it was demonstrated in \cite{vkma:computation} that these explicit Weil height bounds combined with efficient sieves constructed in \cite{vkma:computation} allow to solve these classical equations in practice. We are currently trying to work out similar explicit applications of Theorem~A for equations defining Hilbert moduli schemes with $g\geq 2$. To this end, if $g$ is not too large, say $g\leq 100$, then our simplified height bounds in (i) are in fact already sufficiently strong for practical computations when combined with efficient sieves which usually can deal in practice with huge initial bounds.


\subsection{Idea of proofs}

We continue our notation. To prove our results, we use and generalize the strategy of \cite[Thm 7.1]{rvk:intpointsmodellhms} in which Theorem~A was obtained for $g=1$ and we apply the method of Faltings~\cite{faltings:finiteness} (Arakelov, Par\v{s}in, Szpiro) as follows. As in Theorem~A, let $Y$ be a variety over $\ZZ$ with $Y_{\ZZ[1/\nu]}=M_\mathcal P$  a Hilbert moduli scheme of some presheaf $\mathcal P$ on $\mathcal M$. We first use the moduli formalism to obtain a natural map induced by forgetting extra structures, 
$$\phi:Y(\ZZ_S)\to \absg(T),$$
which we call Par\v{s}in construction. Here $\absg(T)$ is the set of isomorphism classes of abelian schemes over $T=\spec(\ZZ_S[1/\nu])$ of relative dimension $g$; this set is finite by \cite{faltings:finiteness,zarhin:shafarevich}. Moreover, the effective Shafarevich conjecture  (see Section~\ref{sec:es}) allows to explicitly control $\absg(T)$ and  then it suffices to study the fibers of $\phi$ since $Y(\ZZ_S)=\phi^{-1}(\absg(T))$. While proving the effective Shafarevich conjecture is currently out of reach, this conjecture is known in important cases. For example, the case of abelian schemes of product $\GL_2$-type was established in \cite{rvk:modularhms} via Faltings method~\cite{faltings:finiteness} and Serre's modularity conjecture \cite{khwi:serre}, and this case is sufficient for our purpose. Indeed it turns out that
\begin{equation}\label{eq:gl2factorintro}
\phi:Y(\ZZ_S)\to M_{\GL_{2,g}}(T)\hookrightarrow \absg(T)
\end{equation}
factors through the subset $M_{\GL_2,g}(T)$ of all $A\in\absg(T)$ such that $\End(A)\otimes_\ZZ\QQ$ has a commutative semi-simple $\QQ$-subalgebra of degree $g$. Then the bound for $h_\phi(P)$ in Theorem~A~(i)  follows from the explicit bound in \cite{rvk:modularhms} for the stable Faltings height $h_F$ on $M_{\GL_2,g}(T)$, since $h_\phi=\phi^*h_F$. Further, we show that the height $h_\phi$ on $Y(\bar{\QQ})$ has the Northcott property if the presheaf $\mathcal P$ is finite over $\mathcal M(\bar{\QQ})$. Here the proof  combines the Northcott property of $h_F$ in \cite{fach:deg} with our decomposition (described below) of $\phi$.

To prove the explicit bound for $|Y(\ZZ_S)|$ in Theorem A (ii), we apply the bound for $|M_{\GL_2,g}(T)|$ in \cite{rvk:modularhms}. This reduces the problem to suitably controlling $\deg(\phi)$, where for any map $f:X\to Z$ of sets we write $\deg(f)=\sup_{z\in Z}|f^{-1}(z)|$, since \eqref{eq:gl2factorintro}  gives 
$$|Y(\ZZ_S)|\leq \deg (\phi) |M_{\GL_2,g}(T)|.$$  
The obvious approach to study $\phi$ would be to factor it through the finite map $\phi_{\lambda}:A_{g}\to \absg$ induced by forgetting polarizations, where $A_g=A_{g,1}$ for $A_{g,d}$ defined in Section~\ref{sec:parsin}. However, this approach is problematic for two reasons. Firstly, in general it is not clear how to map in a controlled way the set $Y(\ZZ_S)$ to $A_g(T)$ nor to some $A_{g,d}(T)$ with $d\geq 2$. Secondly, the forget polarization map $\phi_\lambda$ of $A_{g,d}$ has very complicated fibers. To circumvent these difficulties,  we develop a different approach to  study $\phi$ and we decompose $\phi$ as
\begin{equation}\label{eq:decompintro}
\phi:Y(\ZZ_S)\to^{\phi_\alpha} M(T)\to^{\phi_\varphi} \abomult(T)\to^{\phi_\iota} M_{\GL_{2,g}}(T).
\end{equation} 
Here $M(T)$ (resp. $\abomult(T)$) is the set of isomorphism classes of triples $(A,\iota,\varphi)$ (resp. pairs $(A,\iota)$) where $A\in \absg(T)$, $\iota:\OL\to \End(A)$ is a ring morphism and $\varphi$ is a `polarization' of $(A,\iota)$.  Now, the crucial advantage of \eqref{eq:decompintro} is that the natural map $\phi_\varphi:\hilbmod(T)\to \abomult(T)$ induced by forgetting the polarization $\varphi$ has much simpler fibers than $\phi_\lambda:A_g(T)\to \absg(T)$. Indeed, since $\varphi$ is compatible with the $\OL$-action $\iota$ and $L=\OL\otimes_\ZZ\QQ$ has degree $g$, we can reduce to the commutative case and then Dirichlet's unit theorem for orders in $L$ leads to 
\begin{equation*}
\deg(\phi_\varphi)\leq 2^g.
\end{equation*}
Here the reduction, done in Lemmas~\ref{lem:gl2avstructure}, \ref{lem:polformal} and \ref{lem:psipolbound}, consists mostly of formal computations but it also involves a Lie algebra argument requiring that the function field $k(T)=\QQ$. The map $\phi_\alpha$, defined in \eqref{def:forgetlvl}, is induced by forgetting $\mathcal P$-structures. For most $\mathcal P$ of interest in arithmetic, the fibers of $\phi_\alpha$ encapsulate deep arithmetic information and it is currently out of reach to explicitly compute these fibers. However, we only need to bound their size and the formal arguments in Lemmas~\ref{lem:degforgetlvl} and \ref{lem:geomlvlred}, using that $\mathcal M/\ZZ$ is separated, give  
\begin{equation*}
\deg(\varphi_\alpha)\leq |\mathcal P|_\CC.
\end{equation*} 
A disadvantage of our approach via the decomposition \eqref{eq:decompintro} is that we have to deal with the map $\phi_\iota$ induced by forgetting $\iota$. This map is finite, but it has very complicated fibers since $\iota$ is not anymore compatible with a fixed polarization. In fact for most base schemes $T$ it is currently impossible to obtain an explicit bound for $\deg(\phi_\iota)$ in terms of $T$ and $g$. However, for our $T$ with $k(T)=\QQ$, Theorem~\ref{thm:endobound} implies such a bound which then combined with the above displayed results proves Theorem~A (ii). 

We next discuss Theorem~\ref{thm:endobound}. An $\OL$-structure on an abelian scheme $A$ is an $\OL$-module structure on $A$ up to $\Aut(A)$-conjugation, that is a ring morphism $\OL\to R$ modulo conjugation action of $R^\times$ on $R=\End(A)$.   
On taking $\Gamma=\OL$ in Theorem~\ref{thm:endobound}, which holds more generally for any order $\Gamma$ of a general number field, we obtain the following result. 

\vspace{0.3cm}
\noindent{\bf Theorem B.}
\emph{For any finite set $S$ of rational primes, there exist at most $|\textnormal{Pic}(\OL)|(2N_S)^{e_g}$  distinct $\OL$-structures on any abelian scheme over $\ZZ_S$ of relative dimension $g$.}
\vspace{0.3cm}

\noindent  The Jordan--Zassenhaus theorem (JZ) implies finiteness of $\OL$-structures on a fixed abelian scheme $A$ in much more general situations, see Lemma~\ref{lem:generalgammastr}. However this finiteness via (JZ) is ineffective, see Section~\ref{sec:generalgammastr}. In view of this we developed a different approach which exploits that $R=\End(A)$ comes from geometry. 
The principal ideas are as follows: For any abelian scheme $A$ as in Theorem~B, we first make a reduction to the key case when $R=\uM_n(\OL_K)$ for $\OL_K$ the ring of integers of a subfield $K\subseteq L$ of relative degree $n$. This reduction consists of several steps and is quite involved. For example it requires to replace $\OL$ by the order $\Gamma=\ZZ[\delta\OL]$ of $L$, since we apply a carefully chosen isogeny $A\to B^n$ of degree $\delta$ where $\End(B)=\OL_K$. Here we can control the degree $\delta$ in terms of $g$ and $N_S$ by the uniform isogeny estimate in \cite{rvk:modularhms}, which in turn relies on the most recent version, due to Gaudron--R\'emond~\cite{gare:isogenies}, of the Masser--W\"ustholz isogeny theorem~\cite{mawu:abelianisogenies} based on transcendence. To prove the result in the key case, we decompose the set of $\Gamma$-structures on $R=\textnormal{M}_n(\OL_K)$ into $|\Hom(K,L)|$ distinct subsets $\Sigma_\varphi$ of $\varphi$-compatible $\order$-structures on $R$ for $\varphi:K\to L$. Then we identify $\Sigma_\varphi$ with a set $J_\varphi$ of isomorphism classes of certain $\Gamma$-modules and we construct an embedding of $J_\varphi$ into the monoid $C_\Gamma$ of equivalence classes of (not necessarily invertible) fractional ideals of $\Gamma$. To bound $|C_\Gamma|$, we apply an idea of Lenstra which computes $C_\Gamma=\textnormal{Pic}(\Gamma)\cdot \mathcal I$ with $\mathcal I$ a controlled finite set and we use algebraic number theory to relate $|\Pic(\Gamma)|$ with $|\Pic(\OL)|$. This then leads to Theorem~B.

Finally, we point out that the above described proofs crucially exploit (at several places) that the ground field is $\QQ$ and it is clear that substantially new ideas are required to generalize our explicit results in Theorem~A to arbitrary number fields.



\subsection{Organization of the paper}

\paragraph{Outline of the paper.} After recalling  basic properties of orders and abelian schemes in Section~\ref{sec:orders}, we review in Section~\ref{sec:hms} the construction of the Hilbert modular stack following \cite{depa:hilbertmodular}. Here we also define Hilbert moduli schemes and we discuss basic properties of a height on such schemes. Then we state our main result in Section~\ref{sec:results}. 

In Sections~\ref{sec:endopreliminaries}-\ref{sec:polarizations} we prove various statements which are used in the proofs of our theorems. First, we collect in Section~\ref{sec:endopreliminaries} some preliminary results for  endomorphisms of abelian schemes over Dedekind schemes. In particular, we review here a variant of the (Serre) tensor product of abelian schemes with certain not necessarily projective modules and we consider in more detail the endomorphism ring of an abelian scheme of $\GL_2$-type. In Section~\ref{sec:parsin} we use the natural forgetful map of a Hilbert moduli scheme as a Par\v{s}in construction and we decompose this forgetful map in a way which is useful for the explicit study of its fibers. In Section~\ref{sec:es} we  collect some known results concerning the effective Shafarevich conjecture, while in Section~\ref{sec:levelstructures} we prove useful formal properties of level structures. Then we study in Section~\ref{sec:polarizations} polarizations of abelian schemes with $\OL$-multiplication and we explicitly control the number of such polarizations over certain base schemes. 

In Section~\ref{sec:endo} we discuss and prove Theorem~\ref{thm:endobound}, which implies Theorem~B providing an explicit bound for the number of $\OL$-structures on certain abelian schemes. The proof of Theorem~\ref{thm:endobound} uses the strategy outlined above: In Section~\ref{sec:endokeycase} we first establish a sharper version of Theorem~\ref{thm:endobound} in the key case. Then  we show in Section~\ref{sec:endoreduction} how to reduce the problem to the key case, and finally we prove the theorem by putting everything together in Section~\ref{sec:endoproof}. We also discuss in Section~\ref{sec:generalgammastr} a more general finiteness result for $\OL$-structures, which we deduce from the Jordan--Zassenhaus theorem. 

Finally, in Section~\ref{sec:proofs} we  combine the results obtained in the previous sections in order to prove Theorem~\ref{thm:main}. As a byproduct we also obtain a proof of Proposition~\ref{prop:northcott} on the Northcott property. Further, we show in Section~\ref{sec:generalfin} how to modify our strategy in order to deduce in Proposition~\ref{prop:fin} a more general finiteness result for $S$-integral points on Hilbert moduli schemes. Here the main ingredient is the unpolarized Shafarevich conjecture for abelian varieties over number fields proven by Faltings~\cite{faltings:finiteness} and Zarhin~\cite{zarhin:shafarevich}.

\paragraph{Conventions and notation.} Unless mentioned otherwise, we shall use throughout this paper the following conventions and notation. We denote by $\sets$ and $\sch$ the categories of sets and schemes respectively. Let $S$ be a scheme and let $\mathcal C$ be a category. As usual, we say that a contravariant functor from $\mathcal C$ to $\sets$ is a presheaf on $\mathcal C$  
and we write $\Hom_S$ for $\Hom_{\mathcal C}$  when $\mathcal C$ is the category of $S$-schemes. Further, we shall often omit the subscript $\mathcal C$ of $\Hom_{\mathcal C}$  when it is clear from the context in which category we are working. For example if $A$ and $B$ are abelian schemes over $S$, then $\Hom(A,B)$ denotes the set of $S$-group scheme morphisms $A\to B$ and $\Hom^0(A,B)=\Hom(A,B)\otimes_\ZZ\QQ$. 

If $T$ and $Y$ are $S$-schemes, then we define $Y(T)=\Hom_S(T,Y)$  and we write $Y_T=Y\times_S T$ for the base change of $Y$ from $S$ to $T$. We often identity an affine scheme $S=\spec(R)$ with the ring $R$. For example, if $T=\spec(R)$ is  affine then we write $Y_R$ for $Y_T$ and $Y(R)$ for $Y(T)$. Following \cite{bolura:neronmodels}, we say that $S$ is a Dedekind scheme if $S$ is a normal noetherian scheme of dimension 0 or 1. Further, a variety $Y$ over $S$ is an $S$-scheme $Y$ whose structure morphism $Y\to S$ is separated and of finite type.

For any set $M$, we denote by $\lvert M\rvert$ the number of distinct elements of $M$.  Let $f:M'\to M$ be a map of sets. Then we define $\deg(f) = \sup_{m \in M} |f^{-1}(m)|$, and we say that the map $f$ is finite if for each $m\in M$ the fiber $f^{-1}(m)$ of $f$ over $m$ is finite. By $\log$ we mean the principal value of the natural logarithm and we define the product taken over the empty set as $1$. Finally, for any field $k$ we denote by $\bar{k}$ an algebraic closure of $k$.

\paragraph{Acknowledgements.} We would like to thank Shouwu Zhang for useful comments. Also we are grateful to Hendrik Lenstra for answering our question about  $\Gamma$-fractional ideals for orders $\Gamma$ of number fields; his explanations and ideas resulted in Lemma~\ref{lem:classnumberbound}.

This paper is the first in a series of papers containing the results obtained in our long term project on integral points of certain higher dimensional varieties. While working on this project over the last years, we were supported by the IH\'ES, IAS Princeton, MSRI, MPIM Bonn, Princeton University, University of Amsterdam, and IAS Tsinghua university. We would like to thank all of these institutions for their support. Further we were supported by an EPDI fellowship, NSF grant $300.154591$, and Veni grant.

\section{Abelian schemes and orders}\label{sec:orders}
In this section we introduce some notation and terminology which will be used throughout the paper. Let $S$ be a scheme and let $\mathcal O$ be a not necessarily commutative ring.

\paragraph{Category of $\mathcal O$-abelian schemes.}  An $\mathcal O$-abelian scheme  over $S$ is a left $\mathcal O$-module object in the category of abelian schemes over $S$: The objects of the category of $\OL$-abelian schemes over $S$ are given by pairs $(A, \iota)$ consisting of an abelian scheme $A$ over $S$ and a ring morphism $\iota \colon \OL \to \End(A)$. A morphism $f \colon (A, \iota) \to (A', \iota')$ of $\OL$-abelian schemes over $S$ is a morphism $f \colon A \to A'$ of abelian schemes over $S$ such that $f\iota(x)=\iota'(x)f$ for all $x\in \OL$. In situations where the specific choice of $\iota$ is not relevant, we usually omit $\iota$ from the notation  and we simply say $A$ is an $\mathcal O$-abelian scheme over $S$.

\paragraph{Tensor products of abelian schemes.} Let $A$ be an $\mathcal O$-abelian scheme over $S$ and let $I$ be a finite projective right $\mathcal O$-module. To define the (Serre) tensor product $I\otimes_\mathcal O A$, we observe that $T\mapsto I\otimes_{\mathcal O}A(T)$ defines a contravariant functor from the category of $S$-schemes to the category of abelian groups. This functor is represented by an abelian scheme $I\otimes_{\mathcal O}A$ over $S$, see for example \cite[Thm 7.2 and Thm 7.5]{conrad:gz}. In the case of an arbitrary base scheme $S$, the assumption that $I$ is projective is necessary to assure representability  by an abelian scheme over $S$. 
However in certain situations of interest one can  remove (see Section~\ref{sec:tensorprod}) the assumption that $I$ is projective. 

\paragraph{Orders.} 
Let $\Omega$ be a finite $\QQ$-algebra which is not necessarily commutative.  We say that $\mathcal O$ is an order  of $\Omega$ if it is a finite $\ZZ$-subalgebra of $\Omega$ and $\QQ\OL=\Omega$.  
Then $\OL$ is an order of $\Omega$ if and only if $\OL$ is a $\ZZ$-order in $\Omega$ in the sense of Reiner~\cite{reiner:orders}.

In this paper we mostly work with two classes of orders. To discuss the first class, let $K$ be a number field. Then our definition of an order $\OL$ of $K$ is equivalent to the usual definition \cite[Def 12.1]{neukirch:ant} saying that $\OL$ is a subring of the ring of integers $\OL_K$ of $K$ such that $\OL$ contains an integral basis of length $[K:\QQ]$. Indeed if $\OL$ is an order of $K$, then $\OL$ is a subring of its normalization in $K$ which is $\OL_K$ 
and thus any basis of the free $\ZZ$-module $\OL$ is an integral basis of the $\QQ$-vector space $K=\QQ\OL$ which automatically has length $[K:\QQ]$. 
 Another interesting class of orders is given by the endomorphism rings of abelian varieties: If $A$ is an abelian variety over an arbitrary field, then the endomorphism ring $\End(A)$ of $A$ is a finitely generated torsion-free abelian group  and thus $\End(A)$ identifies with an order of the $\QQ$-algebra $\End^0(A)$. Here $\End(A)$ is not necessarily commutative. 

We say that $\mathcal O$ is a maximal order of $\Omega$ if it is an order of $\Omega$ which is not strictly contained in any other order of $\Omega$. 
If our finite $\QQ$-algebra $\Omega$ is in addition semi-simple, then $\Omega$ is a separable $\QQ$-algebra in the sense of \cite{reiner:orders} and therefore \cite[Cor 10.4]{reiner:orders} gives the following: Any order of $\Omega$ is contained in a maximal order of $\Omega$, and there exists at least one maximal order of $\Omega$. 
In particular $\End(A)$ is always contained in a maximal order of $\End^0(A)$, since the finite $\QQ$-algebra $\End^0(A)$ is semi-simple.

\paragraph{Dual abelian scheme and polarizations.}Let $A$ be an abelian scheme over  $S$.  Its dual $A^\vee=\Pic^0(A)$ is also an abelian scheme over $S$, see for example \cite[Chap I]{fach:deg}. A morphism $\varphi:A\to A^\vee$ of abelian schemes over $S$ is called symmetric if $\varphi$ equals the composition of the dual $\varphi^\vee=\Pic^0(\varphi)$  with the canonical isomorphism $A\isomto (A^\vee)^\vee$. We denote by $\Hom(A,A^\vee)^\textnormal{sym}$ the set of symmetric morphisms from $A\to A^\vee$. A symmetric morphism $\varphi:A\to A^\vee$ is called a polarization of $A$ if the line bundle $(1,\varphi)^*\mathcal P_A$  is relatively ample on $A$.  Here $(1,\varphi)^*\mathcal P_A$ is the pullback by $(1,\varphi):A\to A\times_S A^\vee$ of the Poincar\'e line bundle $\mathcal P_A$ on $A\times_S A^\vee$. 
A principal polarization of $A$ is a polarization of $A$ of degree one. We denote by $\textnormal{Pol}(A)$ the set of polarizations of $A$. If $(A,\iota)$ is an $\OL$-abelian scheme over $S$, then sending $x\in\OL$ to the endomorphism $\Pic^0(\iota(x))$ of $A^\vee$ defines a ring morphism $\iota^\vee:\OL\to \End(A^\vee)$ and therefore $(A^\vee,\iota^\vee)$ is an $\OL$-abelian scheme over $S$. 

\paragraph{Dedekind base scheme.} Let $S$ be a connected Dedekind scheme. Any abelian scheme over $S$ is the N\'eron model of its generic fiber. Thus, if $A$ and $A'$ are abelian schemes over $S$, then base change from $S$ to the function field $k$ of $S$ induces canonical isomorphisms 
\begin{equation}\label{eq:neron}
\Hom(A,A')\isomto \Hom(A_k,A_{k}') \quad \textnormal{and} \quad \End(A)\isomto \End(A_k). 
\end{equation}
This is a formal consequence (see for example \cite[$\mathsection$9.3.1]{rvk:modularhms}) of the universal property of the N\'eron model. In what follows we will often exploit \eqref{eq:neron}  to reduce  statements over any connected Dedekind scheme $S$ to the case when $S$ is the spectrum of a field.

\section{Hilbert moduli schemes}\label{sec:hms}


\noindent In the first part of this section we review the Hilbert moduli stack  constructed by Rapoport~\cite{rapoport:hilbertmodular} and Deligne--Pappas~\cite{depa:hilbertmodular}. In the second part we define Hilbert moduli schemes and we discuss their natural forgetful maps. We also define a height on Hilbert moduli schemes by generalizing the construction of \cite{rvk:intpointsmodellhms}.

\subsection{The Hilbert moduli stack}

Let $L$ be a totally real number field of degree $g$ with ring of integers $\OL$, and let $I$ be a nonzero finitely generated $\OL$-submodule of $L$.                           For each real embedding $\sigma$ of $L$ we choose an orientation on the real line $I\otimes_{\OL,\sigma}\R$, that is a connected component $I_{+}^\sigma$ of this real line minus zero. We denote by $I_+$  the set of totally positive elements of $I$, where $\lambda\in I$ is totally positive if for each $\sigma$ the image of $\lambda$ lies in $I_+^\sigma$.   Following Deligne--Pappas~\cite{depa:hilbertmodular,pappas:hilbmod}, we equip the category $\sch$ with the \'etale topology and we denote by $\mathcal M^I$ the stack over $\sch$  whose objects over an arbitrary scheme  $S$ are given by triples $x=(A,\iota,\varphi)$:
\begin{itemize}
\item[(i)] An abelian scheme $A$ over $S$ of relative dimension $g$.
\item[(ii)] A ring morphism $\iota:\OL\to \End(A)$.
\item[(iii)] A morphism $\varphi:I\to \Hom_\OL(A,A^\vee)^\textnormal{sym}$ of $\OL$-modules such that the induced morphism $I\otimes_{\mathcal O}A \to A^\vee$ is an isomorphism and such that $\varphi(I_+)\subseteq \textnormal{Pol}(A)$.
\end{itemize}
Here $\Hom_\OL(A,A^\vee)^\textnormal{sym}$ denotes the $\OL$-module of symmetric morphisms $(A,\iota)\to(A^\vee,\iota^\vee)$, and $I\otimes_\OL A$ is the abelian scheme over $S$ which represents the tensor product  of $A$ with the finite projective $\OL$-module $I$ (see Section~\ref{sec:orders}). The fiber $\mathcal M^I(S)$ of $\mathcal M^I$ over $S$ is a groupoid. A morphism $f:x\to x'$ of objects of  $\mathcal M^I(S)$ is  an isomorphism $f:(A,\iota)\to (A',\iota')$ of the underlying $\OL$-abelian schemes over $S$ such that $\varphi(\lambda)=f^\vee\varphi'(\lambda)f$ for all $\lambda\in I$. 

We say that a stack is a Hilbert moduli stack if it is equal to the stack $\mathcal M^I$ associated to some $L$, $I$ and $I_+$ as above. An important example of a Hilbert moduli stack is given by the moduli stack $\mathcal M_{1,1}$ of elliptic curves, which was introduced and studied by Mumford~\cite[$\mathsection 4$]{mumford:picardmoduli} and Deligne--Rapoport~\cite{dera:modules}. If $I=D^{-1}$ is the $\ZZ$-dual of $\OL$ and $I_+$ is the (standard) positivity notion $(+,\dotsc,+)$ on $I$, then we call $\mathcal M^I$ the Hilbert moduli stack associated to $L$.  For example $\mathcal M_{1,1}$ identifies with the Hilbert moduli stack associated to $\QQ$. To simplify notation, we shall often omit $I$ from the notation and we  write $\mathcal M$ for $\mathcal M^I$.

\paragraph{Properties of $I$-polarizations.} To discuss some aspects of (iii), let $(A,\iota)$ be an $\OL$-abelian scheme of relative dimension $g$ over an arbitrary scheme $S$. A morphism $\varphi$ as in (iii) is called an $I$-polarization of $(A,\iota)$.  In the case when $\Delta=|\textnormal{disc}(L/\QQ)|$  is invertible on $S$, the existence of such an $I$-polarization assures (see \cite[Cor 2.9]{depa:hilbertmodular}) that the Lie algebra $\textnormal{Lie}(A)$ of $A$ is, locally on $S$, a free $\OL\otimes_\ZZ\mathcal O_S$-module of rank one.  In particular, if $\Delta$ is invertible on $S$ then $\mathcal M(S)$ identifies with $\mathcal M'(S)$ where we denote by $\mathcal M'$ the stack defined by Rapoport in~\cite[1.19]{rapoport:hilbertmodular}. 
  Further,  Yu~\cite{yu:hilbmod} and Vollaard~\cite{vollaard:hilbmod}  worked out in detail the relation of the existence of an $I$-polarization of $(A,\iota)$ to various conditions in the literature \cite{rapoport:hilbertmodular,kottwitz:shimvar,depa:hilbertmodular}, including the determinant condition. 


\paragraph{Geometry of $\mathcal M$.} We now record some useful properties of $\mathcal M$. Firstly $\mathcal M$ is a DM-stack, that is $\mathcal M$ is an algebraic stack in the sense of Deligne--Mumford~\cite[Def 4.6]{demu:stacks}. 
Furthermore $\mathcal M$ is separated and of finite type over $\ZZ$. 
 The stacks $\mathcal M$ and $\mathcal M'$ coincide over $\ZZ[1/\Delta]$, and therefore Rapoport~\cite[Thm 1.20]{rapoport:hilbertmodular} gives that $\mathcal M$ is smooth over $\ZZ[1/\Delta]$.   Deligne--Pappas~\cite{depa:hilbertmodular} studied the singularities of $\mathcal M$ over any rational prime $p$ dividing $\Delta$. In particular they showed that $\mathcal M$ is flat and relative local complete intersection over $\ZZ$, and that the fiber of $\mathcal M$ over $p$ is smooth outside a closed subset of codimension two. Indeed this follows from \cite[Thm 2.2]{depa:hilbertmodular} which says that for any integer $n\geq 3$ these statements hold over $\ZZ[1/n]$ for the finite \'etale cover $\mathcal M_n$ of $\mathcal M_{\ZZ[1/n]}$, where $\mathcal M_n$ is the (representable) Hilbert moduli stack over $\ZZ[1/n]$ with principal level $n$-structure.

\subsection{Moduli schemes}\label{sec:hms2}
Let $\mathcal M$ be a Hilbert moduli stack. We now define Hilbert moduli schemes. Let $Y$ be a scheme and let $\mathcal P$ be a presheaf on $\mathcal M$. We say that $Y$ is a Hilbert moduli scheme of $\mathcal P$, and we write $Y=M_{\mathcal P}$, if $\mathcal P$ is representable by an object in $\mathcal M(Y)$.  Sometimes we simply say that $Y=M_{\mathcal P}$ is a Hilbert moduli scheme when $Y$ is a Hilbert moduli scheme of a presheaf $\mathcal P$ on $\mathcal M$. When working with Hilbert moduli schemes it is often important to specify the involved presheaf,  since $Y$ can be a Hilbert moduli scheme of presheaves on $\mathcal M$ which are geometrically very different.   For example if $\mathcal M(Y)$ is nonempty, then  $Y$ is a Hilbert moduli scheme of the presheaf $\mathcal P=h_y$ on $\mathcal M$ for each object $y$ of $\mathcal M(Y)$. In the case when  $\mathcal M=\mathcal M_{1,1}$, the Hilbert moduli schemes are precisely the moduli schemes of elliptic curves defined in  Katz--Mazur~\cite[4.3.1]{kama:moduli} and \cite[$\mathsection$3.1]{rvk:intpointsmodellhms}.   

\paragraph{Moduli interpretation.} We call a presheaf $\mathcal P$ on $\mathcal M$ a moduli problem on $\mathcal M$. Further, for each object $x$ of $\mathcal M$, the elements of the set $\mathcal P(x)$ are called $\mathcal P$-level structures on $x$. The terminology moduli scheme is (partly) motivated by the following formal  lemma which gives a moduli intepretation for the points of any Hilbert moduli scheme. 

\begin{lemma}\label{lem:moduli}Suppose that $Y$ is a Hilbert moduli scheme of a presheaf $\mathcal P$ on $\mathcal M$. Then $Y$ represents the presheaf on $\sch$ which sends any scheme $S$ to the set of isomorphism classes of pairs $(x,\alpha)$ with $x$ an object of $\mathcal M(S)$ and $\alpha\in\mathcal P(x)$.
\end{lemma}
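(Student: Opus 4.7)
The plan is to prove the lemma by a direct Yoneda-style unpacking of what it means for $\mathcal{P}$ to be representable by an object $y_0 \in \mathcal{M}(Y)$. Let $F$ denote the target presheaf on $\sch$ sending $S$ to the set of isomorphism classes of pairs $(x,\alpha)$ with $x \in \mathcal{M}(S)$ and $\alpha \in \mathcal{P}(x)$; here two pairs $(x,\alpha)$ and $(x',\alpha')$ are identified when there is an isomorphism $x \isomto x'$ in the groupoid $\mathcal{M}(S)$ carrying $\alpha$ to $\alpha'$ under the induced pullback map of $\mathcal{P}$. The goal is to construct a natural isomorphism of presheaves $h_Y = \Hom_{\sch}(-,Y) \isomto F$.

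First I would unpack the representability hypothesis. By definition it supplies a natural isomorphism $\mathcal{P}(-) \cong \Hom_{\mathcal{M}}(-,y_0)$ of presheaves on $\mathcal{M}$, and applying this to $y_0$ itself produces a universal level structure $\alpha_{\textnormal{univ}} \in \mathcal{P}(y_0)$ corresponding to the identity morphism (the standard Yoneda argument). Next I would invoke the structure of $\mathcal{M}$ as a category fibered in groupoids over $\sch$: for any $x \in \mathcal{M}(S)$, a morphism $x \to y_0$ in $\mathcal{M}$ factors uniquely as an isomorphism $\varphi \colon x \isomto f^*y_0$ in $\mathcal{M}(S)$ followed by the cartesian morphism $f^*y_0 \to y_0$ lying over a uniquely determined scheme morphism $f \colon S \to Y$. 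Hence $\Hom_{\mathcal{M}}(x,y_0)$ identifies naturally with the set of such pairs $(f,\varphi)$.

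Having set up the data, I would define a natural transformation $\eta \colon h_Y \to F$ by sending $f \colon S \to Y$ to the class of $(f^*y_0, f^*\alpha_{\textnormal{univ}})$ in $F(S)$. Surjectivity of $\eta_S$ is then immediate: given $(x,\alpha) \in F(S)$, the element $\alpha \in \mathcal{P}(x) \cong \Hom_{\mathcal{M}}(x,y_0)$ decomposes as a pair $(f,\varphi)$ by the preceding step, and $\varphi$ exhibits an isomorphism $(x,\alpha) \cong (f^*y_0, f^*\alpha_{\textnormal{univ}})$ by naturality of the representability bijection. For injectivity, suppose $\eta_S(f_1) = \eta_S(f_2)$; then there is an isomorphism $\psi \colon f_1^*y_0 \isomto f_2^*y_0$ in $\mathcal{M}(S)$ carrying $f_1^*\alpha_{\textnormal{univ}}$ to $f_2^*\alpha_{\textnormal{univ}}$. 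Composing $\psi$ with the cartesian morphism over $f_2$ yields a morphism $f_1^*y_0 \to y_0$ in $\mathcal{M}$; comparing this with the cartesian morphism over $f_1$ through the representability bijection applied to $\alpha_{\textnormal{univ}}$ forces $f_1 = f_2$, where I use crucially that $Y$ is a scheme so its $S$-points form an honest set with no stacky ambiguity.

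The main obstacle I foresee is purely bookkeeping: making precise the compatibility between the action of isomorphisms in $\mathcal{M}(S)$ on $\mathcal{P}(x)$ and the choice of cartesian lifts in the fibered category $\mathcal{M}$, so that $\eta$ is well-defined on isomorphism classes and the various Yoneda identifications are coherent. Once these routine checks are dispatched, the lemma reduces to a formal consequence of the definitions of a Hilbert moduli scheme and of representability on a DM-stack, and requires no geometric input about abelian schemes, polarizations, or level structures.
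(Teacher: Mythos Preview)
Your proposal is correct and follows essentially the same Yoneda-style argument as the paper: both identify $\mathcal P$ with $h_{y_0}=\Hom_{\mathcal M}(-,y_0)$, send $f\in Y(S)$ to the class of $(f^*y_0,\text{pullback data})$, and verify bijectivity using only that $\mathcal M$ is fibered in groupoids over $\sch$. The paper's presentation is marginally more economical in that it treats a level structure $\alpha\in\mathcal P(x)=\Hom_{\mathcal M}(x,y_0)$ directly as a morphism and recovers the scheme map as $p(\alpha)$, whereas you route through a universal element $\alpha_{\textnormal{univ}}$; these are interchangeable formulations of the same argument.
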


Here pairs $(x,\alpha)$ and $(x',\alpha')$ are isomorphic if there exists an isomorphism $f:x\to x'$ in $\mathcal M(S)$ which satisfies $\alpha=\mathcal P(f)(\alpha')$. Further, formal computations show that sending $S$ to the set of isomorphism classes of pairs $(x,\alpha)$  as in Lemma~\ref{lem:moduli}  indeed defines a presheaf  on $\sch$. These computations only use that $\mathcal M$ is fibered in groupoids over $\sch$ in the sense of \cite[$\mathsection$4]{demu:stacks}. 
In fact the following proof shows that Lemma~\ref{lem:moduli} holds more generally when $\mathcal M$ is replaced by an arbitrary category fibered in groupoids over $\sch$. 

\begin{proof}[Proof of Lemma~\ref{lem:moduli}]
Let $p:\mathcal X\to \sch$ be a category fibered in groupoids and let $\mathcal P$ be a presheaf on $\mathcal X$ which is representable by an object $y$ in $\mathcal X(Y)$. We fix a choice of pullbacks in $\mathcal X$. Let $S$ be a scheme, let $\varphi$ be in $Y(S)$ and let $\alpha:\varphi^*y\to y$ be the pullback of $y$ along $\varphi:S\to Y$.  We identify $\mathcal P$ with the presheaf $h_y= \Hom_{\mathcal X}(-,y)$ on $\mathcal X$. It follows that $\varphi\mapsto  [(\varphi^*y,\alpha)]$
defines a map from $Y(S)$ to the set of isomorphism classes of pairs $(x,\alpha)$ with $x$ an object of $\mathcal X(S)$ and $\alpha$ an element of $\mathcal P(x)=h_y(x)$. 
This map is injective: If $\psi$ lies in $Y(S)$ such that $(\varphi^*y,\alpha)$ is isomorphic to $(\psi^*y,\alpha')$,  then there exists an isomorphism $f:\varphi^*y\isomto \psi^*y$ in $\mathcal X(S)$ with $\alpha=\alpha'f$ and thus $\varphi=p(\alpha)$ equals $p(\alpha'f)=\varphi'$ as desired.  
 Furthermore, the two axioms of a category fibered in groupoids assure that $\varphi\mapsto  [(\varphi^*y,\alpha)]$ is surjective (take $\varphi=p(\alpha)$) and is canonical in $S$ by compatibility of pullbacks. We conclude that $Y$ represents the presheaf as claimed in the lemma.
\end{proof}

We now consider an arbitrary moduli problem $\mathcal P$ on $\mathcal M$ and we let $S$ be a scheme. To control the number of distinct $\mathcal P$-level structures on any object in $\mathcal M(S)$, we define
\begin{equation}\label{def:maxlvl}
|\mathcal P|_S=\sup |\mathcal P(x)|
\end{equation}
with the supremum taken over all objects $x$ of $\mathcal M(S)$. 
We say that $\mathcal P$ is finite over $\mathcal M(S)$ if  $\mathcal P|_{\mathcal M(S)}$ is a finite presheaf. In particular  $\mathcal P$ is finite over $\mathcal M(S)$ if $|\mathcal P|_S<\infty$.

\paragraph{Forgetful map.} Suppose that $Y=M_{\mathcal P}$ is a Hilbert moduli scheme. To define its natural forgetful map, we identify the scheme $Y$ with its functor of points. We  denote by $\{(A,\iota,\varphi,\alpha)\}/\sim$ the presheaf on $\sch$ which sends any scheme $S$ to the set of isomorphism classes of quadruples $(A,\iota,\varphi,\alpha)$ over $S$. Here a quadruple over $S$ is a pair $(x,\alpha)$ given by an object $x=(A,\iota,\varphi)$  of $\mathcal M(S)$ together with  a $\mathcal P$-level structure $\alpha$ on $x$. In other words $\{(A,\iota,\varphi,\alpha)\}/\sim$ is the presheaf  in Lemma~\ref{lem:moduli} and thus Lemma~\ref{lem:moduli} gives
\begin{equation}\label{quadriso}
Y\isomto\{(A,\iota,\varphi,\alpha)\}/\sim.
\end{equation}
Let $g\geq 1$ be an integer. We denote by $\absg$ the presheaf on $\sch$ which sends a scheme $S$ to the the set of isomorphism classes of abelian schemes over $S$ of relative dimension $g$. This underlined $\absg$ should not be confused with the usual $A_g=A_{g,1}$ which classifies principally polarized abelian schemes $(A,\psi)$, see  the discussion of the forgetful maps $A_{g,d}\to \absg$ in \eqref{eq:forgetfulpolav}. Suppose now that $g$ is the relative dimension of the abelian schemes classified by the stack $\mathcal M$. Then, on projecting the isomorphism class of a quadruple to the isomorphism class of the underlying abelian scheme, we see that \eqref{quadriso} induces a morphism 
\begin{equation}\label{def:forgetfulmap}
\phi:Y\to \absg
\end{equation}
of presheaves on $\sch$. Intuitively one can think of $\phi$ as the map which forgets the extra structures $(\iota,\varphi,\alpha)$ on the abelian scheme $A$ underlying the quadruple $(A,\iota,\varphi,\alpha)$ defining a point of $Y$.  In light of this we call $\phi$ the (natural) forgetful map of $Y=M_{\mathcal P}$, and we write $\phi_\mathcal P$ for $\phi$ in situations where it is important to specify $\mathcal P$.


\paragraph{Height.} We now generalize to arbitrary Hilbert moduli schemes the height which was defined in \cite[(3.3)]{rvk:intpointsmodellhms} for moduli schemes of elliptic curves. Let $S$ be a connected Dedekind scheme whose function field $k$ is algebraic over $\QQ$. For any abelian scheme $A$ over $S$ we denote by 
$
h_F(A)
$
the stable Faltings height $h_F$  of the generic fiber of $A$, defined for example in \cite[$\mathsection$2.1]{rvk:modularhms}. Here we use Faltings' original normalization~\cite[p.354]{faltings:finiteness} of the metric involved in the definition of $h_F$. In fact we obtain a height function $h_F:\absg(S)\to \mathbb R$ since isomorphic abelian schemes over $S$ have the same stable Faltings height. Then for any Hilbert moduli scheme $Y=M_{\mathcal P}$ with forgetful map $\phi=\phi_{\mathcal P}$, we  define a height function
\begin{equation}\label{def:height}
h_\phi: Y(S)\to \mathbb R
\end{equation}
by setting $h_\phi=\phi^*h_F$. In other words $h_\phi$ is the pull back of the stable Faltings height $h_F:\absg(S)\to \mathbb R$ by the map $\phi(S):Y(S)\to \absg(S)$. The set $Y(S)$ might be  empty  and to cover this case we define $h_\phi=0$ on the empty set. The height $h_\phi$ in \eqref{def:height} is compatible with any base change $S'\to S$ for $S'$ a connected Dedekind scheme whose function field is algebraic over $\QQ$, since $\phi$ is a morphism of presheaves and since $h_F$ is invariant under $S'\to S$. Further, if $S$ and $T$ are nonempty open subschemes of $\spec(\ZZ)$, and if $Y$ is a scheme such that $Y_T=M_{\mathcal P}$ is a Hilbert moduli scheme, then we define the height $h_\phi$ on $Y(S)$ as follows:  For any $P\in Y(S)$ we put $h_{\phi}(P)=h_{\phi}(P'),$ where $P'$ is the canonical image of $P$ in $Y(U)\cong Y_T(U)$ for $U=S\cap T$ and where  $h_\phi:Y_T(U)\to \RR$ is the above height \eqref{def:height} involving the forgetful map $\phi=\phi_{\mathcal P}$ of $Y_T$.  We point out that the height $h_\phi$ depends on $\phi$ which in turn depends on the involved presheaf $\mathcal P$. 

\paragraph{Hilbert moduli schemes over $\ZZ[1/n]$.}Sometimes we shall work over an open substack $\mathcal M_T$ of $\mathcal M$ with $T\subseteq \spec(\ZZ)$ nonempty open: On replacing in the above definitions and constructions $\mathcal M$ by $\mathcal M_T$, we directly obtain the notion of a Hilbert moduli scheme over $T$ of a presheaf on $\mathcal M_T$ and then over $T$ we may and do define the forgetful map $\phi$ and the height $h_\phi$ analogously as in \eqref{def:forgetfulmap} and \eqref{def:height} respectively. We say that a presheaf $\mathcal P$ on $\mathcal M$ is representable over $T$ if its restriction to $\mathcal M_T$ is representable.


\paragraph{Northcott property.} Let $K$ be a number field with ring of integers $\OL_K$. Suppose that $Y$ is a variety over $\OL_K$ and assume that $Y$ is a Hilbert moduli scheme of some presheaf $\mathcal P$ on $\mathcal M$. Then we say that the height $h_\phi$ defined in \eqref{def:height} has the Northcott property on $Y(\bar{K})$ if the following holds\footnote{Here we view $Y$ as an $\OL_K$-scheme. Then $Y(\bar{K}):=\Hom_{\OL_K}(\bar{K},Y)$ by the usual convention and the height $h_\phi:\Hom_\sch(\bar{K},Y)\to\RR$ constructed in \eqref{def:height} is well-defined on the subset $Y(\bar{K})\subseteq \Hom_\sch(\bar{K},Y)$.}: For any real $c>0$ and each $d\in\ZZ_{\geq 1}$, there are only finitely many points $P$ in $Y(\bar{K})\cong Y_K(\bar{K})$ with $[K(P):K]\leq d$ and $h_\phi(P)\leq c$. 



\begin{proposition}\label{prop:northcott}
If $\mathcal P$ is  finite over $\mathcal M(\bar{K})$, then $h_\phi$ has the Northcott property on $Y(\bar{K})$.
\end{proposition}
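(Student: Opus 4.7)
The plan is to fix $c>0$ and $d\geq 1$ and show that
$B_{c,d}:=\{P\in Y(\bar K):[K(P):K]\leq d,\ h_\phi(P)\leq c\}$
is finite by combining the Northcott property for the stable Faltings height with finiteness of the fibres of $\phi=\phi_{\mathcal P}$.

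First I would apply the Northcott property of $h_F$ (proved in~\cite{fach:deg}) to the image $\phi(B_{c,d})\subseteq \absg(\bar K)$. By Lemma~\ref{lem:moduli}, each $P\in B_{c,d}$ corresponds to the isomorphism class of a quadruple $(A_P,\iota_P,\varphi_P,\alpha_P)$ defined over the residue field $K(P)$, whose underlying $g$-dimensional abelian variety $A_P$ has $h_F(A_P)=h_\phi(P)\leq c$ and is defined over a number field of degree at most $d[K:\QQ]$ over $\QQ$. The Northcott property of $h_F$ then forces $\phi(B_{c,d})$ to be a finite subset of $\absg(\bar K)$.

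Next I would bound each non-empty fibre $\phi^{-1}([A])\cap B_{c,d}$ using the decomposition $\phi=\phi_\iota\circ\phi_\varphi\circ\phi_\alpha$ of~\eqref{eq:decompintro}. For a fixed $A$ in the image, the ring maps $\iota:\OL\to \End(A)$ correspond to $\QQ$-algebra embeddings $L\hookrightarrow \End^0(A)$ whose image lies in $\End(A)$, and there are at most $[L:\QQ]=g$ such, since any such embedding sends a chosen primitive element of $L$ to a root of its minimal polynomial in the finite-dimensional semisimple $\QQ$-algebra $\End^0(A)$. For each such $\iota$, the set of $\OL$-compatible $I$-polarizations of $(A,\iota)$ modulo $\Aut(A,\iota)$ is finite; this is the qualitative content of the bound $\deg(\phi_\varphi)\leq 2^g$ developed in Section~\ref{sec:polarizations} and ultimately rests on Dirichlet's unit theorem applied to the totally positive units of $\OL$. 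Finally, for a fixed $(A,\iota,\varphi)$, the set $\mathcal P(A,\iota,\varphi)$ is finite by our hypothesis that $\mathcal P$ is finite over $\mathcal M(\bar K)$. Combining these three fibre-finiteness statements with the finiteness of $\phi(B_{c,d})$ yields the desired finiteness of $B_{c,d}$.

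The hard part, if one can call it that, is largely bookkeeping: one must verify that the convention $Y(\bar K)=\Hom_{\OL_K}(\bar K,Y)$ from the footnote makes Lemma~\ref{lem:moduli} yield exactly the moduli interpretation used above, and that the factorisation of $\phi$ constructed in Section~\ref{sec:parsin} descends correctly to $\bar K$-valued points. Once this compatibility is in place, the proof is a direct assembly of the Faltings--Chai Northcott property with qualitative versions of the bounds on $\deg(\phi_\iota)$, $\deg(\phi_\varphi)$ and $\deg(\phi_\alpha)$ already established (in much sharper form) in Sections~\ref{sec:endo}, \ref{sec:polarizations} and \ref{sec:levelstructures}.
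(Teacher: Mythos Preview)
Your overall architecture matches the paper's: reduce to finiteness of $\phi(B_{c,d})$ via the Northcott property of $h_F$ from \cite{fach:deg}, and then control the fibres of $\phi$ through the decomposition $\phi=\phi_\iota\phi_\varphi\phi_\alpha$. The paper packages the fibre-finiteness as Lemma~\ref{lem:finiteiff}, but the content is the same.

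There is, however, a genuine gap in your treatment of $\phi_\iota$. You claim that the ring maps $\iota:\OL\to\End(A)$ number at most $g$ because a primitive element of $L$ must go to a root of its minimal polynomial in $\End^0(A)$. This argument fails: $\End^0(A)$ is in general a noncommutative semisimple $\QQ$-algebra (think $A=E^g$ with $\End^0(A)\cong\uM_g(\QQ)$), and in such algebras a polynomial of degree $g$ can have infinitely many roots. Even after passing to $\Aut(A)$-conjugacy classes, which is what the fibre of $\phi_\iota$ actually parametrizes, one is counting $\End(A)^\times$-orbits of ring maps $\OL\to\End(A)$, and Skolem--Noether only gives conjugacy under $\End^0(A)^\times$; descending to the integral level is precisely the nontrivial step. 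The paper handles this via Lemma~\ref{lem:generalgammastr}, which reinterprets $\OL$-structures as $\Lambda$-lattices for $\Lambda=\OL\otimes_\ZZ\End(A)^{op}$ and invokes the Jordan--Zassenhaus theorem. You should replace your primitive-element argument by an appeal to that lemma (or reprove it).

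A minor secondary point: the bound $\deg(\phi_\varphi)\leq 2^g$ in Corollary~\ref{cor:forgetpol} is stated only for base schemes with function field $\QQ$; over $\bar K$ you need the extension noted at the end of the proof of Lemma~\ref{lem:psipolbound}, where $\End_{\mathcal C}^0(A)$ may be a CM quadratic extension of $L$ rather than $L$ itself. This is harmless for the qualitative finiteness you need, but worth flagging.
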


In particular $h_\phi$ has the Northcott property on $Y(\bar{K})$ if $|\mathcal P|_{\bar{K}}<\infty$. We shall deduce Proposition~\ref{prop:northcott} from the Northcott property \cite[p.169]{fach:deg} of $h_F$ on $\absg(\bar{\QQ})$ by exploiting the following: The  map $\phi(S)$ used in \eqref{def:height} has finite fibers if and only if $\mathcal P$ is finite over $\mathcal M(S)$. This statement, which uses inter alia the Jordan--Zassenhaus theorem, will be obtained in Lemma~\ref{lem:finiteiff} as a byproduct of the proofs of our main results.


\section{Statement of main result}\label{sec:results}
To state our main result we use the terminology introduced above. Let $L$ be a totally real number field  with ring of integers $\OL$, let $I$ be a nonzero finitely generated $\OL$-submodule of $L$ with a positivity notion $I_+$ and let $\mathcal M=\mathcal M^I$ be the associated Hilbert moduli stack. Write $g=[L:\QQ]$ and for any $U\subseteq \spec(\ZZ)$ denote by $N_U=\prod p$ the product of all rational primes $p$ not in $U$. Let $Y$ be a variety over $\ZZ$ and let $S\subseteq\spec(\ZZ)$ be nonempty open. 

\begin{theorem}\label{thm:main}
 Suppose that there is a nonempty open $T\subseteq \spec(\ZZ)$ such that $Y_T$ is a Hilbert moduli scheme of a presheaf $\mathcal P$ on $\mathcal M$. If $U=S\cap T$ then the following holds.
\begin{itemize}
\item[(i)]  Any $P\in Y(S)$ satisfies $h_\phi(P)\leq (3g)^{144g}N_U^{24}.$
\item[(ii)] If $e_g=(8g)^{8}$ then  
$|Y(S)|\leq |\textnormal{Pic}(\OL)||\mathcal P|_{U}(2N_U)^{e_g}.$
\end{itemize}
\end{theorem}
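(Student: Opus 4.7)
The proof will execute exactly the strategy outlined in the \emph{Idea of proofs}, so the task is to assemble the pieces established in Sections~\ref{sec:endopreliminaries}--\ref{sec:endo} rather than to introduce new ideas. Because $h_\phi$ and $Y(S)$ are defined via base change to $Y_U \cong (Y_T)_U$ for $U = S\cap T$, and because $Y$ is a separated $\ZZ$-scheme so that the restriction $Y(S) \to Y(U)$ is injective, I would first pass to $U$ and work with the Par\v{s}in forgetful map $\phi = \phi_{\mathcal P}\colon Y(U) \to \absg(U)$ from \eqref{def:forgetfulmap}. The plan is then to control both $h_\phi(P)$ and $|Y(S)| \leq |Y(U)|$ by factoring $\phi$ through the subset $M_{\GL_2,g}(U) \subseteq \absg(U)$ of abelian schemes whose endomorphism algebra contains a degree-$g$ commutative semi-simple $\QQ$-subalgebra, as in \eqref{eq:gl2factorintro}, and invoking the explicit effective Shafarevich estimates of \cite{rvk:modularhms} available for that subset.

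For (i), the factorization through $M_{\GL_2,g}(U)$ will be supplied by Lemma~\ref{lem:gl2avstructure}: the $\OL$-multiplication $\iota$ carried by the quadruple $(A,\iota,\varphi,\alpha)$ representing a point of $Y(U)$ automatically provides the required degree-$g$ subalgebra $L \subseteq \End^0(A)$. Since $h_\phi = \phi^\ast h_F$ by construction, the height bound will then follow immediately by pulling back the explicit bound for the stable Faltings height $h_F$ on $M_{\GL_2,g}(U)$ from \cite{rvk:modularhms}, after inserting the explicit numerical constants in a form that simplifies to $(3g)^{144g} N_U^{24}$.

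For (ii), I would apply submultiplicativity of $\deg$ along the decomposition \eqref{eq:decompintro} to obtain
\[
|Y(U)| \leq \deg(\phi_\alpha)\cdot \deg(\phi_\varphi)\cdot \deg(\phi_\iota)\cdot |M_{\GL_2,g}(U)|,
\]
and then estimate the four factors separately. The bound $\deg(\phi_\alpha) \leq |\mathcal P|_U$ will come from the formal arguments of Section~\ref{sec:levelstructures}, using separatedness of $\mathcal M/\ZZ$; the bound $\deg(\phi_\varphi) \leq 2^g$ will come from Section~\ref{sec:polarizations}, via the reduction to the commutative case and Dirichlet's unit theorem for orders of $L$; the bound $\deg(\phi_\iota) \leq |\Pic(\OL)|(2N_U)^{e_g'}$ with $e_g' < e_g$ will come from Theorem~\ref{thm:endobound} (which is Theorem~B); and $|M_{\GL_2,g}(U)|$ will be bounded in terms of $N_U$ via the explicit counting half of \cite{rvk:modularhms}. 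Multiplying these four inputs and absorbing the factor $2^g$ together with the exponent coming from $|M_{\GL_2,g}(U)|$ into $(2N_U)^{e_g}$, which is comfortable because $e_g = (8g)^8$ dominates the other exponents, will yield the stated bound $|\Pic(\OL)|\,|\mathcal P|_U (2N_U)^{e_g}$.

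The genuine difficulty lies not in this assembly, which is essentially bookkeeping on exponents, but in the two black boxes it relies on: the effective Shafarevich estimate of \cite{rvk:modularhms} (which uses Faltings' method together with Serre's modularity theorem), and the endomorphism-structure count of Theorem~\ref{thm:endobound} (which will require the Masser--W\"ustholz--Gaudron--R\'emond isogeny theorem to reduce to the key case $\End(A)=\uM_n(\OL_K)$, together with a Lenstra-type argument on equivalence classes of not-necessarily-invertible fractional ideals of non-maximal orders). The main subtlety to watch for when gluing the pieces together will be that the reduction in $\phi_\varphi$ used to obtain the $2^g$ bound requires $k(U)=\QQ$, which is precisely the hypothesis under which Theorem~\ref{thm:main} is formulated.
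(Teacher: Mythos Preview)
Your proposal is correct and follows essentially the same route as the paper's proof: reduce from $Y(S)$ to $Y_T(U)$ via separatedness, then for (i) pull back the height bound \eqref{eq:esgl2} along the factorization $\phi\colon Y_T\to M_{\GL_2,g}$ from \eqref{eq:forgetfuldecomp}, and for (ii) combine \eqref{eq:proofofcardbound} with Lemma~\ref{lem:degforgetlvl}, Corollary~\ref{cor:forgetpol}, Corollary~\ref{cor:forgetendo} and \eqref{eq:qesgl2} before collecting exponents. The only minor inaccuracy is that the factorization through $M_{\GL_2,g}$ comes directly from the embedding $\iota\otimes\QQ\colon L\hookrightarrow\End^0(A)$ (see \eqref{def:forgetendo}) rather than from Lemma~\ref{lem:gl2avstructure}.
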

It holds that $N_U=\textnormal{rad}(N_T N_S)$, and  $|\mathcal P|_{U}\leq |\mathcal P|_\QQ\leq |\mathcal P|_{\bar{\QQ}} \leq |\mathcal P|_\CC$ by Lemma~\ref{lem:geomlvlred}. Proposition~\ref{prop:northcott} gives that the height $h_\phi$ on $Y(S)$, which depends on $\mathcal P$,  has the Northcott property on $Y(\bar{\QQ})$ if $|\mathcal P|_{\bar{\QQ}}<\infty$. However the bound for $h_\phi$ in (i) can also be useful when $h_\phi$ has no Northcott property; see for example the introduction where we discuss various aspects of Theorem~\ref{thm:main}. We obtain Theorem A in the introduction by applying Theorem~\ref{thm:main} with $I=D^{-1}$ the inverse different of $\OL$ and $I_+$ the standard positivity notion. To see this take $T=\spec(\ZZ[1/\nu])$ for $\nu=N_T$ and consider the complement of $S\subseteq \spec(\ZZ)$ which is a finite set of rational primes.  
The proof of Theorem~\ref{thm:main}  will be given in Section~\ref{sec:proofs}, and we refer to the introduction for an outline of the principal ideas used in the proof.



\subsection{The Hilbert moduli scheme $Y_1(\mathfrak n)$}\label{sec:p1n}

We continue our notation. To discuss a first application of Theorem~\ref{thm:main}, we take an ideal $\mathfrak n\subseteq \OL$ of norm $n\geq 1$ and we consider the  moduli problem $\mathcal P_1(\mathfrak n)$ on $\mathcal M$. Intuitively this moduli problem parametrizes $\mathfrak n$-torsion points of exact order $\mathfrak n$. For example, if $\mathcal M=\mathcal M_{1,1}$ then  $\mathcal P_1(\mathfrak n)$ identifies with the $\Gamma_1(n)$-moduli problem of Katz--Mazur~\cite[p.99]{kama:moduli} which sends an elliptic curve $E\in\mathcal M_{1,1}(\CC)$ to its set of points of exact order $n$. 

\paragraph{The moduli problem $\mathcal P_1(\mathfrak n)$.}  We now give the formal definition of the presheaf $\mathcal P_1(\mathfrak n)$ on $\mathcal M$ following Drinfeld, Katz--Mazur and Pappas~\cite[$\mathsection$2.3.2]{pappas:hilbmod}. Let $S$ be a scheme,  let $x=(A,\iota,\varphi)$ be in $\mathcal M(S)$ and let $y=(A',\iota',\varphi')$ be in $\mathcal M^{I'}(S)$ for $I'=\mathfrak nI$. An $\mathfrak n$-isogeny $f:x\to y$ is a morphism of $\OL$-abelian schemes $f:(A,\iota)\to (A',\iota')$ which is an isogeny of (constant) degree $n$ such that $\ker(f)$ is annihilated by $\mathfrak n$ and such that $f^\vee\varphi'(\lambda)f=\varphi(\lambda)$ for each $\lambda\in I'\subseteq I$. Further, an $\OL/\mathfrak n$-generator of $G=\ker(f)$ is a section $P\in G(S)$ such that the $\OL/\mathfrak n$-linear morphism $\epsilon:(\OL/\mathfrak n)_S\to G$, induced by $1\mapsto P$, is an $\OL/\mathfrak n$-structure on $G$ in the sense of \cite[$\mathsection$1.10]{kama:moduli} or \cite[5.1.1]{pappas:hilbmod};  in the case when $G$ is \'etale over $S$, the morphism $\epsilon$ is an $\OL/\mathfrak n$-structure on $G$ if and only if $\epsilon$ is an isomorphism. We define
\begin{equation}\label{def:p1n}
\mathcal P_1(\mathfrak n)(x)=\{P\in A(S); \, P \textnormal{ an }  \OL/\mathfrak n\textnormal{-generator of }\ker(f)\}
\end{equation}
where $f:x\to y$ is an $\mathfrak n$-isogeny with $y\in \mathcal M^{I'}(S)$. Being an $\mathfrak n$-isogeny and being an $\OL/\mathfrak n$-generator are both properties which are stable under arbitrary base change. Therefore we see that sending the object $x$ of $\mathcal M$ to the set $\mathcal P_1(\mathfrak n)(x)$ defines a presheaf $\mathcal P_1(\mathfrak n)$ on $\mathcal M$. An ideal of $\OL$ is called square-free if it is the product of distinct prime ideals of $\OL$. In the case when the ideal $\mathfrak n$ is square-free, Pappas~\cite{pappas:hilbmod} studied the singularities of the separated finite type Deligne--Mumford stack $\mathcal H_{00}(\mathfrak n)$ over $\ZZ$ whose objects over $S$ are $\mathfrak n$-isogenies  $f:x\to y$ together with an $\OL/\mathfrak n$-generator $P$ of $\ker(f)$. For example, Pappas showed that $\mathcal H_{00}(\mathfrak n)$ is Cohen--Macaulay and flat over $\ZZ$ if the ideal $\mathfrak n$ is square-free, and  he proved that $\mathcal H_{00}(\mathfrak n)$ is regular if $g=2$ and the norm $n$ is square-free. 

\paragraph{Integral points.}To apply Theorem~\ref{thm:main}, we need that the moduli problem  $\mathcal P_1(\mathfrak n)$ on $\mathcal M$ is representable over some nonempty open of $\spec(\ZZ)$. In \cite{vkkr:repcond} we will conduct some effort to work out in detail explicit conditions on $\mathfrak n$ which assure the representability of $\mathcal P_1(\mathfrak n)$. Suppose now that $\mathcal P_1(\mathfrak n)$ is representable over $\ZZ[1/n]$ with Hilbert moduli scheme $Y_1(\mathfrak n)=M_{\mathcal P_1(\mathfrak n)}$ a (quasi-projective) variety over $\ZZ[1/n]$. Then we shall deduce in Section~\ref{sec:proofp1n} the following result in which we assume again that $S\subseteq \spec(\ZZ)$ is nonempty open.

\begin{corollary}\label{cor:p1n}
Let $Y$ be a variety over $\ZZ$, which becomes over $\ZZ[1/n]$ isomorphic to $Y_1(\mathfrak n)$. 
\begin{itemize}
\item[(i)]  Any point $P\in Y(S)$ satisfies $h_\phi(P)\leq (3g)^{144g}(nN_S)^{24}.$
\item[(ii)] The cardinality of  
$Y(S)$ is at most $|\textnormal{Pic}(\OL)|(2nN_S)^{e_g}.$
\end{itemize}
\end{corollary}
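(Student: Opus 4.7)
The plan is to apply Theorem~\ref{thm:main} directly with $T = \spec(\ZZ[1/n])$, since by hypothesis $Y_T \cong Y_1(\mathfrak n) = M_{\mathcal P_1(\mathfrak n)}$ is a Hilbert moduli scheme of the presheaf $\mathcal P = \mathcal P_1(\mathfrak n)$ on $\mathcal M$. Setting $U = S \cap T$, the primes absent from $T$ are exactly those dividing $n$, so $N_T = \textnormal{rad}(n)$ and hence $N_U = \textnormal{rad}(N_T N_S) = \textnormal{rad}(nN_S) \leq nN_S$. Part~(i) is then immediate from Theorem~\ref{thm:main}(i): every $P \in Y(S)$ satisfies
\[
h_\phi(P) \leq (3g)^{144g} N_U^{24} \leq (3g)^{144g}(nN_S)^{24}.
\]

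For part~(ii), Theorem~\ref{thm:main}(ii) yields $|Y(S)| \leq |\textnormal{Pic}(\OL)| \, |\mathcal P_1(\mathfrak n)|_U \, (2N_U)^{e_g}$. The one additional input I need is a uniform bound on the number of $\mathcal P_1(\mathfrak n)$-level structures. Using Lemma~\ref{lem:geomlvlred} (which gives $|\mathcal P|_U \leq |\mathcal P|_\CC$ in general), it suffices to show $|\mathcal P_1(\mathfrak n)|_\CC \leq n^{2g}$. This is immediate from the moduli description \eqref{def:p1n}: any $\mathcal P_1(\mathfrak n)$-level structure on $x = (A,\iota,\varphi) \in \mathcal M(\CC)$ is in particular a section of $A[\mathfrak n]$, whose geometric fiber has cardinality at most $n^{2g}$ (the $\OL$-action in fact makes it a rank-two $\OL/\mathfrak n$-module, but this cruder bound suffices).

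Combining the preceding inequalities with $N_U \leq nN_S$ and absorbing the factor $n^{2g}$ into $(2nN_S)^{e_g}$, which is harmless because the exponent $e_g = (8g)^8$ comfortably dominates $2g$, yields the stated bound $|\textnormal{Pic}(\OL)|(2nN_S)^{e_g}$. No genuinely new obstacle arises: Theorem~\ref{thm:main} and Lemma~\ref{lem:geomlvlred} do all the heavy lifting, and the only real computation is the elementary count of $\OL/\mathfrak n$-generators inside $A[\mathfrak n]$. The most delicate point is simply the bookkeeping that combines inverting $n$ (to pass to $Y_T \cong Y_1(\mathfrak n)$) with inverting the primes outside $S$ before invoking the main theorem, ensuring that $N_U$ correctly records both contributions.
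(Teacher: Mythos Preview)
Your approach for part~(i) is essentially the paper's and is correct. However, there is a genuine gap in your part~(ii): the absorption of $n^{2g}$ into $(2nN_S)^{e_g}$ fails as stated. Take $n$ squarefree and coprime to $N_S$; then $N_U = \textnormal{rad}(nN_S) = nN_S$, so Theorem~\ref{thm:main}(ii) gives only $|\textnormal{Pic}(\OL)|\, n^{2g}(2nN_S)^{e_g}$, which exceeds the target $|\textnormal{Pic}(\OL)|(2nN_S)^{e_g}$ by a factor of $n^{2g}$. The fact that $e_g \gg 2g$ does not help, because once $N_U = nN_S$ there is no slack left in the base to swallow the extra $n^{2g}$. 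The paper avoids this by invoking not Theorem~\ref{thm:main}(ii) itself but the sharper intermediate estimate \eqref{eq:mainthmii} obtained in its proof, namely $|Y_T(U)| \leq |\textnormal{Pic}(\OL)||\mathcal P|_U (4g)^{(8g)^7} N_U^{(12g)^5}$. Since $(12g)^5 + 2g < (8g)^8$ and $(4g)^{(8g)^7} \leq 2^{(8g)^8}$, one can then legitimately absorb both $n^{2g}$ and the constant into $(2nN_S)^{e_g}$.

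A second, more minor point: Theorem~\ref{thm:main} requires $Y_T$ to be a Hilbert moduli scheme of a presheaf on $\mathcal M$, not merely on $\mathcal M_T$. The hypothesis only says that $\mathcal P_1(\mathfrak n)$ is representable over $\ZZ[1/n]$, i.e.\ that its restriction to $\mathcal M_T$ is represented by some $y \in \mathcal M_T(Y_T)$. The paper handles this by passing to the presheaf $\mathcal P' = h_y$ on all of $\mathcal M$, which agrees with $\mathcal P_1(\mathfrak n)$ over $\mathcal M_T$; Theorem~\ref{thm:main} and Lemma~\ref{lem:geomlvlred} are then applied to $\mathcal P'$, and one uses $|\mathcal P'|_\CC = |\mathcal P_1(\mathfrak n)|_\CC$ since $\spec(\CC)$ is a $T$-scheme. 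This is a formal but necessary step that your write-up elides.
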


In the case when $\mathcal M$ is the Hilbert moduli stack associated to $L$, this result is precisely the corollary discussed in the introduction.  If $\mathcal M=\mathcal M_{1,1}$ and $n\geq 4$, then one can compute explicit equations of the curve $Y_1(n)$ over $\ZZ[1/n]$; see for example Baaziz~\cite{baaziz:x1nmodels}, Sutherland~\cite{sutherland:torsion} and Jin~\cite{jin:torsion}. We are currently trying to find some `interesting' Diophantine equations which define a Hilbert moduli scheme $Y_1(\mathfrak n)$ over $\ZZ[1/n]$ when $g\geq 2$.

\section{Endomorphisms of abelian schemes}\label{sec:endopreliminaries}

In this section we collect preliminary results for  endomorphisms of abelian schemes over Dedekind schemes. We first give explicit relations between the endomorphism rings of two isogenous abelian schemes. Then we review a variant of the (Serre) tensor product of abelian schemes with certain not necessarily projective modules. In the last part we consider in more detail the endomorphism ring of an abelian scheme of $\GL_2$-type.

\subsection{Endomorphism rings of isogenous abelian schemes}\label{sec:endorelation}

Let $S$ be a connected Dedekind scheme. While the endomorphism algebras of isogenous abelian schemes $A$ and $A'$ over $S$ coincide up to isomorphism, the relation between the endomorphism rings encapsulates interesting arithmetic information.  In this section, we review explicit constructions of isomorphisms $\End^0(A)\cong\End^0(A')$ in order to obtain basic results (used in Section~\ref{sec:endo}) relating $\End(A)$ with $\End(A')$.  These constructions and results should be all well-known. However, in several cases we are not aware of suitable references and then we will give the arguments for the sake of completeness.


\paragraph{Quotients and isogenies.} Forming quotients of group schemes is a delicate process in general. However, in what follows we only consider quotients in the following special situation. Let $G\subseteq A$ be a closed subgroup scheme which is finite flat over $S$. 
It follows for example from results in \cite[$\mathsection$4]{anantharaman:groupes} that the fppf quotient of $A$ by $G$ is represented by a commutative $S$-group scheme $A/G$.
The quotient morphism $A\to A/G$ is finite and faithfully flat, since  $G$ is finite flat over $S$. 
Then, on using that  the structure morphism $A\to S$ is smooth proper, we obtain that $A/G$ is smooth proper over $S$.
In particular $A/G$ is an abelian scheme over $S$ and the quotient morphism $A\to A/G$ is an isogeny.

Let $\varphi \colon A\to A'$ be a morphism of abelian schemes over $S$. The following three statements are equivalent: (i)  $\varphi$ is finite surjective; (ii)  $\varphi_s:A_s\to A'_s$ is finite surjective for each $s\in S$; (iii) $\varphi_k:A_k\to A'_k$ is finite surjective for $k$ the function field of $S$. In particular, $\varphi$ is an isogeny  if and only if $\varphi_k$ is an isogeny. For the readers convenience we now explain why (iii) implies (i). For each nonzero $n\in\ZZ$ the multiplication by $n$ morphism $[n]:A\to A$ is finite flat by \cite[$\mathsection$7.3]{bolura:neronmodels}. If $d=\deg(\varphi_k)$ then (iii) assures that $[d]_k$ factors through $\varphi_k$ and thus \eqref{eq:neron} shows that the finite morphism $[d]$ has to factor through the unique extension $\varphi$ of $\varphi_k$. This implies that $\varphi$ is finite, since any morphism between proper $S$-schemes is proper. In particular each $\varphi_s$ is finite and hence surjective, which proves (i).  Further,  (i) implies that $\varphi$ is flat. Therefore, if $\varphi$ is an isogeny then its kernel $\ker(\varphi)$ is a closed subgroup scheme of $A$ which is finite flat over $S$, there is an isomorphism $A/\ker(\varphi)\isomto A'$ and the rank $\deg(\varphi)$ of $\ker(\varphi)$ is constant on the connected scheme $S$.

\paragraph{Endomorphism rings.} Let $\varphi \colon A\to A'$ be an isogeny of abelian schemes over $S$ and write $d=\deg(\varphi)$. For each nonzero $n\in\ZZ$, we denote by $A_n$ the kernel of the isogeny $[n]:A\to A$. 
The above discussion shows that $\ker(\varphi)\subseteq A_d$ and there is a natural projection $A/\ker(\varphi)\to A/A_d$. 
We consider the isogeny $\varphi':A'\to A$ given by 
\begin{equation}\label{eq:naturalconverse}
\varphi': A' \isomto A/\ker(\varphi) \to A/A_d  \isomto A.
\end{equation} 
Here the first morphism is the inverse of the natural quotient map $A/\ker(\varphi)\isomto A'$ 
and the third morphism is $[d]$. If $S$ is the spectrum of a field, then \cite[Lem~7.3.5]{bolura:neronmodels} implies that $\varphi'\varphi=[d]$. Moreover, on using the above arguments and elementary formal computations, we see that the isogeny $\varphi'$ is an inverse of $\varphi$ in the following sense.

\begin{lemma}\label{lem:dmult}
It holds
$\varphi'\varphi = [d]$ inside $\End(A)$ and $\varphi\varphi' = [d]$ inside $\End(A')$.
\end{lemma}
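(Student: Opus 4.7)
The plan is to reduce both identities to the case when $S$ is the spectrum of a field, and then to invoke the hint from the excerpt together with a cancellation argument. Since $A$ and $A'$ are abelian schemes over the connected Dedekind scheme $S$, they are the N\'eron models of their generic fibres, and hence \eqref{eq:neron} gives canonical ring isomorphisms $\End(A)\isomto\End(A_k)$ and $\End(A')\isomto\End(A'_k)$ with $k$ the function field of $S$. Consequently, to prove $\varphi'\varphi=[d]$ in $\End(A)$ and $\varphi\varphi'=[d]$ in $\End(A')$ it will suffice to establish these identities after base change to $k$.

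First I would check that the construction of $\varphi'$ in \eqref{eq:naturalconverse} is compatible with the base change from $S$ to $\spec(k)$. Here the flatness hypotheses do all the work: $\ker(\varphi)\subseteq A$ is finite flat over $S$, so its formation commutes with base change and the quotient $A\to A/\ker(\varphi)$ restricts to the analogous quotient of $A_k$; the subgroup $A_d=\ker([d])$ is likewise finite flat over $S$; and the two isomorphisms $A/\ker(\varphi)\isomto A'$ and $A/A_d\isomto A$ appearing in \eqref{eq:naturalconverse} are canonical, so they restrict to their counterparts over $k$. Therefore $(\varphi')_k$ coincides with the isogeny built from $\varphi_k$ via the analogue of \eqref{eq:naturalconverse} over $k$, and we may assume $S=\spec(k)$.

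Over $k$ the first identity $\varphi'\varphi=[d]_A$ is precisely \cite[Lem~7.3.5]{bolura:neronmodels}, as noted in the excerpt. To deduce the companion identity formally, I would use that $[d]$ is central among morphisms of abelian varieties to write
\[
(\varphi\varphi')\circ\varphi \;=\; \varphi\circ(\varphi'\varphi) \;=\; \varphi\circ[d]_A \;=\; [d]_{A'}\circ\varphi,
\]
so $(\varphi\varphi'-[d]_{A'})\circ\varphi=0$ in $\Hom(A,A')$. Since $\varphi$ is faithfully flat and surjective, it is an epimorphism of $k$-schemes, so post-composition with $\varphi$ is injective on $\Hom(A',A')$ and we conclude $\varphi\varphi'=[d]_{A'}$. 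The only step requiring any care is the base-change compatibility of \eqref{eq:naturalconverse}, but this follows immediately from the finite flatness of $\ker(\varphi)$ and $A_d$; I do not anticipate any serious obstacle.
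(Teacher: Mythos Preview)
Your proof is correct and follows essentially the same approach as the paper: reduce to the function field via \eqref{eq:neron}, invoke \cite[Lem~7.3.5]{bolura:neronmodels} for $\varphi'\varphi=[d]$, and derive the companion identity by a formal computation. The paper does not spell out the details (it only says ``on using the above arguments and elementary formal computations''), and your cancellation argument via the epimorphism property of $\varphi$ is a clean way to fill that gap; one minor phrasing nit is that the map $g\mapsto g\circ\varphi$ on $\Hom(A',A')$ is usually called \emph{pre}-composition with $\varphi$, not post-composition.
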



\noindent To define  an explicit isomorphism $\End^0(A) \isomto \End^0(A')$, we introduce more notation.  Let $B$, $B'$ and $B''$ be abelian schemes over $S$. Composition induces a pairing $\circ$ from $\Hom^0(B,B')\times\Hom^0(B',B'')$ to $\Hom^0(B,B'')$. 
For any nonzero $m\in\ZZ$ and each $\psi\in\Hom(B,B')$, we denote by $\psi/m$ the element $\psi\otimes m^{-1}$ of $\Hom^0(B,B').$ It follows for example from \eqref{eq:neron} and \cite[p.176]{mumford:av} that $\Hom(B,B')$ is a finitely generated free abelian group. Thus $\psi\mapsto \psi\otimes 1$ embeds  $\Hom(B,B')$  into $\Hom^0(B,B')$. 
In what follows we shall often identify a morphism $B\to B'$ with its image in $\Hom^0(B,B')$. We now define 
\begin{equation}\label{def:phiupstar}
\varphi^* \colon \End^0(A) \isomto \End^0(A'), \quad f \mapsto \varphi \circ f \circ \varphi'/d. 
\end{equation}
To prove that $\varphi^*$ is an isomorphism of $\QQ$-algebras, we use that the endomorphism $[d]$ of $A$ is invertible inside the $\QQ$-algebra $\End^0(A)$ with inverse $1 \otimes d^{-1}$. Then Lemma~\ref{lem:dmult} implies that $\varphi'/d\circ \varphi$ and $\varphi\circ \varphi'/d$ are the multiplicative units of $\End^0(A)$ and $\End^0(A')$ respectively. This shows that $\varphi^*$ is an isomorphism of $\QQ$-algebras, 
with inverse
\begin{equation}\label{def:philowstar}
\varphi_* \colon \End^0(A') \isomto \End^0(A), \quad g \mapsto \varphi'/d \circ g \circ \varphi.
\end{equation}
On using these explicit constructions of the isomorphisms $\varphi_*$ and $\varphi^*$ between the endomorphism algebras of $A$ and $A'$, we now can relate the endomorphism rings as follows. 

\begin{lemma}\label{lem:endorelation}
Suppose that $\varphi:A\to A'$ is an isogeny of abelian schemes over $S$ of degree $d=\deg(\varphi)$.  Then the following two statements hold.
\begin{itemize}
\item[(i)] Inside $\End^0(A')$ we have $d\cdot \End(A') \subseteq \varphi^*(\End(A))$.
\item[(ii)] Inside $\End^0(A)$ it holds $d\cdot \End(A) \subseteq \varphi_*(\End(A'))$. 
\end{itemize}
\end{lemma}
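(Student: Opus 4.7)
The plan is to verify both inclusions by exhibiting, for each target element, a concrete preimage which lies in the integral endomorphism ring. The key point is that the isogeny $\varphi'$ constructed in \eqref{eq:naturalconverse} is an \emph{honest} morphism of abelian schemes over $S$, not merely an element of $\Hom^0(A',A)$, so any composition involving $\varphi$ and $\varphi'$ lands in the integral $\Hom$-group before we invert anything. The proof should therefore be a short, essentially formal computation once Lemma~\ref{lem:dmult} is invoked.

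For part (i), I would proceed as follows. Take an arbitrary $g\in\End(A')$ and set $f=\varphi'\circ g\circ \varphi$; since $\varphi$, $g$ and $\varphi'$ are all morphisms of abelian schemes over $S$, their composition $f$ is literally an element of $\End(A)$ (no tensoring with $\QQ$ required). It then suffices to compute $\varphi^*(f)\in\End^0(A')$ using \eqref{def:phiupstar}, namely
\[
\varphi^*(f)=\varphi\circ(\varphi'\circ g\circ\varphi)\circ\varphi'/d=(\varphi\varphi')\circ g\circ(\varphi\varphi')/d,
\]
and apply Lemma~\ref{lem:dmult}, which gives $\varphi\varphi'=[d]$ inside $\End(A')$. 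The right-hand side therefore collapses to $[d]\circ g\circ[d]/d=d\cdot g$ in $\End^0(A')$, which shows $d\cdot g\in\varphi^*(\End(A))$ as required.

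Part (ii) is symmetric: given $f\in\End(A)$, I would set $g=\varphi\circ f\circ \varphi'\in\End(A')$ and compute $\varphi_*(g)$ via \eqref{def:philowstar}, using the other identity $\varphi'\varphi=[d]$ of Lemma~\ref{lem:dmult}, to obtain $\varphi_*(g)=d\cdot f$ inside $\End^0(A)$.

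There is essentially no obstacle here beyond bookkeeping: the nontrivial content has already been packaged into Lemma~\ref{lem:dmult} and into the fact that $\varphi'$ from \eqref{eq:naturalconverse} is an integral isogeny. The only point worth being careful about is the distinction between compositions performed inside $\End(A)$ (or $\End(A')$) and inside the respective rationalizations; since $\psi\mapsto\psi\otimes 1$ embeds $\End(A)$ into $\End^0(A)$, identifying $f$ with its image causes no harm, and the equalities $\varphi^*(f)=d\cdot g$ and $\varphi_*(g)=d\cdot f$ hold in $\End^0(A')$ and $\End^0(A)$ respectively, which is exactly what the statement asserts.
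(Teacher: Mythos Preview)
Your proposal is correct and follows essentially the same approach as the paper: for (i) you both take $g\in\End(A')$, set $f=\varphi' g\varphi\in\End(A)$, and verify $\varphi^*(f)=dg$, and (ii) is the symmetric computation. The only cosmetic difference is that the paper organizes the calculation via the identity $\varphi^*\varphi_*=\mathrm{id}$ (observing $f=d\varphi_*(g)$ first) rather than expanding $\varphi^*(f)$ directly using $\varphi\varphi'=[d]$, but the content is identical.
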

\begin{proof}
To prove (i) we take $g \in \End(A')$. Then $f = \varphi' g \varphi$ lies in $\End(A)$ and we compute that  $f= d\varphi_*(g)$ inside $\End^0(A)$. 
It follows that $\varphi^*(f) = d\varphi^*\varphi_*(g) = dg$ inside $\End^0(A')$, since $\varphi^*$ is an isomorphism of $\QQ$-algebras with inverse $\varphi_*$. In particular, for each  $g \in \End(A')$ there exists an $f \in \End(A)$ such that $\varphi^*(f) = d g$ inside $\End^0(A')$. We deduce that $d \cdot \End(A') \subseteq \varphi^*(\End(A))$ inside $\End^0(A')$. This proves assertion (i). 

To show (ii) we take $f\in \End(A)$. Then $g = \varphi f \varphi'$ lies in $\End(A')$ and we see that $\varphi_*(g)=\varphi_*\varphi^*(fd)=fd=df$ inside $\End^0(A)$. Hence $d \cdot \End(A) \subseteq \varphi_*(\End(A'))$ inside $\End^0(A)$ 
as claimed in assertion (ii). This completes the proof.
\end{proof}

\subsection{Tensor product of abelian schemes}\label{sec:tensorprod}

Let $A$ be a nonzero abelian scheme over an arbitrary connected Dedekind scheme $S$ and let $\mathcal O$ be a commutative subring of $\End(A)$. 
Suppose that $\mathcal O'$ is a $\ZZ$-flat $\OL$-algebra that is finitely generated as an $\OL$-module and $\OL\otimes_\ZZ\QQ\isomto \OL'\otimes_\ZZ\QQ$ is an isomorphism. The following lemma is (a consequence of a construction) due to Chai--Conrad--Oort~\cite{chcooo:cmbook}.


\begin{lemma}\label{lem:abtensorproduct}
There is an abelian scheme $A'$ over $S$, isogenous to $A$, with $\mathcal O'\hookrightarrow \End(A')$.
\end{lemma}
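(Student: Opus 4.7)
The plan is to realize $A'$ as a quotient of $A$ by a carefully chosen finite flat subgroup scheme, and then to transport the rational $\mathcal{O}'$-action on $A$ to a genuine action on $A'$. Since $\mathcal{O}'$ is $\mathbb{Z}$-finite and $\mathcal{O}\otimes_{\mathbb{Z}}\mathbb{Q}\isomto\mathcal{O}'\otimes_{\mathbb{Z}}\mathbb{Q}$, the additive group $\mathcal{O}'/\mathcal{O}$ is finite, so I first pick $n\in\mathbb{Z}_{\geq 1}$ with $n\mathcal{O}'\subseteq\mathcal{O}$ and set $I := n\mathcal{O}'$. Commutativity of $\mathcal{O}$ (and of $\mathcal{O}'$, which sits inside $\mathcal{O}\otimes_{\mathbb{Z}}\mathbb{Q}$) makes $I$ an ideal of $\mathcal{O}$ containing $n\mathcal{O}$ and satisfying the crucial identity $I^2 = nI$.

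Next, working over the function field $k$ of $S$, I define $K_k := \bigcap_{a\in I}\ker\bigl([a]_{A_k}\bigr) \subseteq A_k[n]$ and let $K\subseteq A$ be its scheme-theoretic closure in $A$. Since $K$ is closed in the finite flat $S$-group scheme $A[n]$, and scheme-theoretic closures of generic-fiber subschemes are flat over a Dedekind base, $K$ is a finite flat $S$-subgroup scheme annihilated by every $a\in I$; commutativity of $\mathcal{O}$ ensures that $K$ is also $\mathcal{O}$-stable. The material reviewed in Section~\ref{sec:endorelation} then produces an abelian scheme $A':=A/K$ with an isogeny $q\colon A\to A'$, and the $\mathcal{O}$-action descends to a ring map $\mathcal{O}\to\End(A')$. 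For $\alpha\in\mathcal{O}'$, the element $n\alpha$ lies in $I\subseteq\mathcal{O}$ and so induces a descended endomorphism $\widetilde{n\alpha}\in\End(A')$; the central claim is that $\widetilde{n\alpha}$ kills $A'[n]$, whence it factors uniquely as $[n]\circ\widetilde{\alpha}$ for some $\widetilde{\alpha}\in\End(A')$, using torsion-freeness of $\End(A')$ via \eqref{eq:neron}. A short check then shows that $\alpha\mapsto\widetilde{\alpha}$ is a ring map $\mathcal{O}'\hookrightarrow\End(A')$ extending the $\mathcal{O}$-action, and $q\colon A\to A'$ is the required isogeny.

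The main obstacle is proving the central claim $\widetilde{n\alpha}|_{A'[n]} = 0$. By \eqref{eq:neron} it suffices to work on the generic fiber, and then on $\bar{k}$-points (in characteristic zero this is enough by \'etaleness of $A'_k[n]$; in positive characteristic one runs the same calculation fppf-locally). A geometric point $y\in A'[n](\bar{k})$ lifts through the surjective isogeny $q$ to some $x\in A(\bar{k})$ with $nx\in K(\bar{k})$, equivalently $nIx=0$. The relation $I^2 = nI$ now pays off: for every $b\in I$ one has $I(bx)\subseteq I^2 x = nIx = 0$, so $bx\in A[I](\bar{k}) = K(\bar{k})$. Commutativity of $\mathcal{O}'$ together with $I\cdot K = 0$ then give
$$b(n\alpha\cdot x) = n\alpha\cdot(bx) = 0 \quad \text{for every } b\in I,$$
whence $n\alpha\cdot x\in K$, and finally $\widetilde{n\alpha}(y) = q(n\alpha\cdot x) = 0$, as required.
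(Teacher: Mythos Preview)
Your proof is correct and takes a genuinely different route from the paper. The paper's argument is essentially a citation: it invokes \cite[1.7.4.5]{chcooo:cmbook} to obtain the fppf-sheafified tensor product $A'_k=\mathcal O'\otimes_{\mathcal O}A_k$ as an abelian variety over the function field $k$, with the desired $\mathcal O'$-action and isogenous to $A_k$, and then spreads out to $S$ via good reduction and \eqref{eq:neron}. Your argument instead gives an explicit, self-contained construction: you realize $A'$ as $A/K$ for the finite flat $S$-subgroup scheme $K$ obtained as the flat closure of the $I$-torsion $A_k[I]$, and the algebraic identity $I^2=nI$ (coming from $\mathcal O'$ being a ring) is exactly what forces the rational $\mathcal O'$-action to be integral on the quotient. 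The paper's approach is shorter and packages the construction as a black box; yours is more elementary, avoids the external reference, and makes the mechanism transparent---in effect you are unwinding what the Chai--Conrad--Oort tensor construction does in this particular case. One small redundancy: once you have shown $bx\in K(\bar k)$ for all $b\in I$, you can take $b=n\alpha\in I$ directly and conclude $n\alpha\cdot x\in K(\bar k)$ without the extra commutation step; but your longer argument is also valid.
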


Here we can take $A'=\OL'\otimes_\OL A$ if the $\OL$-module $\OL'$ is projective. However, since  $\mathcal O'$  is not necessarily projective, we can in general not apply the usual (Serre) tensor product construction for projective $\mathcal O$-modules. To prove Lemma~\ref{lem:abtensorproduct}, we use instead that $A$ is the N\'eron model of its generic fiber and we work with the fppf sheaf $A_k'=\mathcal O'\otimes_{\mathcal O} A_k$ which has the desired properties for any abelian variety $A_k$ over a field $k$ by \cite[$\mathsection$1.7.4]{chcooo:cmbook}.

\begin{proof}[Proof of Lemma~\ref{lem:abtensorproduct}]
Let $k$ be the function field of our connected Dedekind scheme $S$ and let $A_k$ be the generic fiber of $A$. On using the isomorphism $\End(A)\isomto \End(A_k)$ in \eqref{eq:neron} and the subring $\mathcal O$ of $\End(A)$, we equip $A_k$ with an $\OL$-abelian scheme structure. Now we see that   $T\mapsto \mathcal O'\otimes_{\mathcal O} A_k(T)$ defines a contravariant functor  from the category of $k$-schemes to the category of abelian groups.   Then  \cite[1.7.4.5]{chcooo:cmbook} gives that the fppf sheafification of this functor  is represented by an abelian variety $A'_k=\mathcal O'\otimes_{\mathcal O}A_k$ over $k$ which is isogenous to $A_k$ and $\mathcal O'\hookrightarrow \End(A'_k)$. 
 The generic fiber $A_k$ of the abelian scheme $A$ over $S$ has good reduction at all closed points of $S$. Hence the isogenous abelian variety $A'_k$ has good reduction at all closed points of $S$ and thus it extends to an abelian scheme $A'=\mathcal O'\otimes_{\mathcal O}A$ over $S$ with $\End(A')\cong \End(A'_k)$.  Furthermore any isogeny between $A_k$ and $A'_k$ extends to an isogeny between $A$ and $A'$. Thus $A'$ has the desired properties.
\end{proof}
For non-projective modules $\OL'$ and a general base scheme $S$, the construction of a `tensor product' of  $A$ with $\OL'$ is a complicated problem; see  \cite[1.7.4.3]{chcooo:cmbook}.


\subsection{Endomorphism algebras of abelian schemes of $\GL_2$-type}
In this section we collect useful results on  abelian schemes of $\GL_2$-type. In particular we review and prove some properties of the endomorphism algebras of such abelian schemes. Throughout this subsection we let $S$ be a scheme, we denote by $g$ a positive rational integer and we let $A$ be an abelian scheme over $S$ of relative dimension $g$.

\paragraph{Abelian schemes of $\GL_2$-type.} We say\footnote{Some authors use a more restrictive definition. For example, they assume in addition simplicity or they make extra assumptions on the Lie algebra. Assuming simplicity would be too restrictive for our purpose. On the other hand, we do not make extra assumptions on the Lie algebra because either we do not need them or these assumptions are automatically satisfied in the situations under consideration.} that $A$ is of $\GL_2$-type if the $\QQ$-algebra  $\End^0(A)$ contains a number field of degree $g$ over $\QQ$.  The terminology is motivated as follows: If $\End^0(A)$ contains a number field $L$ of degree $g$ over $\QQ$, then for each $s\in S$ the field $L$ embeds into $\End^0(A_s)$ 
 and thus the action of the absolute Galois group of $k(s)$ on the rational Tate module $V_\ell(A_s)$ defines a representation with values in $\GL_2(L\otimes_\QQ \QQ_\ell)$ where $\ell\neq \textnormal{char} \, k(s)$ is a rational prime. 
More generally, we say that $A$ is of product $\GL_2$-type if $A$ is isogenous to a product $\prod A_i$ of abelian schemes $A_i$ over $S$ such that each $A_i$ is of $\GL_2$-type.  
We denote by $M_{\GL_2,g}(S)$ the set of isomorphism classes of abelian schemes over $S$ of relative dimension $g$ which are of product $\GL_2$-type.
Then we claim that 
\begin{equation}\label{def:gl2functor}
S\mapsto M_{\GL_2,g}(S)
\end{equation}
defines a presheaf $M_{\GL_2,g}$ on $\sch$, 
where for any morphism $S'\to S$ of schemes the map $M_{\GL_2,g}(S'\to S)$ sends the isomorphism class of $A$ in $M_{\GL_2,g}(S)$ to the isomorphism class of $A_{S'}$ in $M_{\GL_2,g}(S')$.  If $\End^0(A)$ contains a subring $L$, then any base change $S'\to S$ induces a morphism of rings $L\to\End^0(A_{S'})$ 
 which is injective when $L$ is a field. We conclude that being of $\GL_2$-type is a property which is stable under any base change.  Then the same holds for the property of being of product $\GL_2$-type, since finite and faithfully flat (and thus being an isogeny) are properties of morphisms which are stable under any base change. This shows that \eqref{def:gl2functor} indeed defines a presheaf on $\sch$.


\paragraph{Endomorphisms.} We start with an elementary observation which we shall use several times in what follows in this paper. Let $n\geq 1$ be a rational integer. For any ring $R$ we denote by $\uM_n(R)$ the endomorphism ring of the free $R$-module $R^n$. 

\begin{lemma}\label{lem:Centralizer}
Let $K$ and $L$ be number fields such that $[L:\QQ]=n[K:\QQ]$. If $L$ is a subring of  $\uM_n(K)$, then $L$ is the centralizer of $L$ in $\uM_n(K)$ and thus $K\subseteq L$. 
\end{lemma}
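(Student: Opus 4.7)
The plan is to reduce the claim to the double centralizer theorem for central simple algebras. Once we have established $K\subseteq L$, the rest is formal: $L$ becomes a subfield of the central simple $K$-algebra $\uM_n(K)$ with $[L:K]=n$, and the double centralizer theorem yields $[C:K]\cdot[L:K]=\dim_K\uM_n(K)=n^2$ for $C$ the centralizer of $L$ in $\uM_n(K)$, hence $\dim_K C=n=[L:K]$. Since $L$ is commutative we have $L\subseteq C$, and the equality of $K$-dimensions forces $L=C$. So the heart of the proof is to show $K\subseteq L$.

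For this I would introduce the subring $A\subseteq\uM_n(K)$ generated by $L$ and $K$. Because $K$ is the center of $\uM_n(K)$, elements of $K$ commute with elements of $L$, so $A$ is a commutative $K$-subalgebra of $\uM_n(K)$. Setting $F=L\cap K$, the natural $\QQ$-algebra map $L\otimes_F K\to A$ is surjective. Since $F$ has characteristic zero, both $L$ and $K$ are finite separable extensions of $F$, so $L\otimes_F K$ is a finite product of fields, hence commutative semisimple; the same therefore holds for its quotient $A$.

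The key input is the following bound: any commutative semisimple $K$-subalgebra $A$ of $\uM_n(K)=\End_K(K^n)$ satisfies $\dim_K A\leq n$. To prove it, I would decompose $A=\prod_i F_i$ into its simple factors via the primitive idempotents and split $K^n=\bigoplus_i V_i$ accordingly; each $V_i$ is an $F_i$-module, and is nonzero since $A$ acts faithfully on $K^n$, so $\dim_K V_i=[F_i:K]\cdot\dim_{F_i}V_i\geq[F_i:K]$. Summing gives $n=\sum_i\dim_K V_i\geq\sum_i[F_i:K]=\dim_K A$. Applied to our $A$, this yields $\dim_\QQ A=[K:\QQ]\dim_K A\leq n[K:\QQ]=\dim_\QQ L$, and combined with $L\subseteq A$ we conclude $L=A$; in particular $K\subseteq L$.

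The only conceptually substantive step is the dimension bound for commutative semisimple subalgebras in the previous paragraph, which is classical. Everything else is structural bookkeeping (commutativity from centrality of $K$, semisimplicity from separability of $K/F$, dimension comparison, double centralizer), and I expect the write-up to be short once that bound is recorded.
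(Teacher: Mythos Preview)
Your argument is correct, but it takes a noticeably heavier route than the paper's. The paper proceeds in a single direct stroke: view $V=K^n$ as an $L$-vector space via the given embedding $L\hookrightarrow\uM_n(K)=\End_K(V)$, and compare $\QQ$-dimensions to get $[K:\QQ]\cdot n=\dim_\QQ V=[L:\QQ]\cdot\dim_L V$, whence $\dim_L V=1$ by the hypothesis $[L:\QQ]=n[K:\QQ]$. Then any element of the centralizer $C$ of $L$ in $\uM_n(K)$ is an $L$-linear endomorphism of the one-dimensional $L$-space $V$, so $C\subseteq L$; commutativity of $L$ gives $L\subseteq C$, hence $C=L$, and $K\subseteq L$ drops out because $K$ is the center of $\uM_n(K)$.

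By contrast, you first prove $K\subseteq L$ via the auxiliary algebra $A=KL$, the semisimplicity of $L\otimes_F K$, and the classical bound on commutative semisimple subalgebras of $\uM_n(K)$, and only then invoke the double centralizer theorem. This works, but it imports structure theory that the statement does not actually need: the paper's dimension count is doing the same job as your ``$\dim_K A\leq n$'' bound, but tailored to the situation and without any detour through idempotent decompositions or the double centralizer theorem. What your approach buys is a template that would generalize to other central simple algebras in place of $\uM_n(K)$; what the paper's approach buys is a two-line proof.
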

\begin{proof}
The abelian group $V=K^n$ becomes a $L$-vector space via  the inclusion $L \subseteq \uM_n(K)$. Linear algebra gives that $[K:\QQ]\dim_K V=\dim_\QQ V=[L:\QQ]\dim_L V$, and our assumption $[L:\QQ]=n[K:\QQ]$ then implies that $\dim_L V=1$. 
Each element in the centralizer $C$ of $L$ in $\uM_n(K)$ is an endomorphism of the $L$-vector space $V$, 
and the endomorphism ring of any one-dimensional $L$-vector space is equal to $L$. It follows that $C\subseteq L$. On the other hand $L$ is contained in its centralizer $C$, since $L$ is commutative. We conclude that $C=L$ as claimed. This implies that $K\subseteq L$, since $K$ is the center of $\uM_n(K)$. 
\end{proof}

We denote by $\dim B$ the relative dimension of an abelian scheme $B$ over a connected scheme $S$. The first three statements in the following lemma are (consequences of)  results of Ribet~\cite{ribet:gl2} for endomorphism algebras of abelian varieties over $\QQ$ of $\GL_2$-type.


\begin{lemma}\label{lem:gl2avstructure}
Assume that $S$ is a connected Dedekind scheme with function field $\QQ$, and suppose that $A$ is an abelian scheme over $S$ of $\GL_2$-type. Then the following holds.
\begin{itemize}
\item[(i)] There exist $n\in\ZZ_{\geq 1}$ and an abelian scheme $B$ over $S$ with the following property: $(*)$  $A$ is isogenous  to $B^n$ and  the generic fiber of $B$  is simple.
\item[(ii)] Let $L/\QQ$ be a number field of degree $g$ which is a subring of $\End^0(A)$, and let $B$ be an abelian scheme over $S$ with $(*)$. Then the $\QQ$-algebra $\End^0(B)$ is isomorphic to a subfield $K$ of $L$ such that $K/\QQ$ has degree $\dim B$ and $L/K$ has  degree $n$. 
\item[(iii)] If $L$ is as in (ii), then $L$ is the centralizer of $L$ inside $\End^0(A)$.
\item[(iv)]There is an abelian scheme $B$ over $S$ with $(*)$ such that $\End(B)$ is Dedekind.
\end{itemize}
\end{lemma}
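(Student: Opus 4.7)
The plan is to prove (i)--(iv) roughly in the order (i), (iii), (ii), (iv), leveraging that since the function field of $S$ is $\QQ$ the N\'eron-model identification \eqref{eq:neron} reduces all endomorphism statements to the generic fiber $A_\QQ$, and then exploiting that $\textup{Lie}(A_\QQ)$ is a one-dimensional $L$-vector space.

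For (i), I would apply Poincar\'e complete reducibility over $\QQ$ to write $A_\QQ \sim \prod_i B_i^{n_i}$ with $B_i$ pairwise non-isogenous simple abelian varieties over $\QQ$, giving $\End^0(A_\QQ) \cong \prod_i \textup{M}_{n_i}(D_i)$ for division algebras $D_i = \End^0(B_i)$. Since in characteristic zero the action of $\End^0(A_\QQ)$ on $\textup{Lie}(A_\QQ)$ is faithful (visible over $\CC$ via the uniformization), the embedding $L\hookrightarrow \End_\QQ(\textup{Lie}(A_\QQ))$ equips $\textup{Lie}(A_\QQ)$ with the structure of an $L$-module of $\QQ$-dimension $\dim A = [L:\QQ]$, hence of $L$-dimension one. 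The central idempotents $e_i$ of $\prod_j \textup{M}_{n_j}(D_j)$ are $L$-linear $\QQ$-endomorphisms of $\textup{Lie}(A_\QQ)$ summing to the identity on a one-dimensional $L$-module, so exactly one, $e_{i_0}$, acts as the identity and the rest as zero; by faithfulness any such vanishing $e_i$ must itself be zero in $\End^0(A_\QQ)$, ruling out every other factor. Thus $A_\QQ \sim B^n$ for a single simple $B$. Good reduction of $A$ at every closed point of $S$ descends to $B$ along this isogeny, so $B$ extends to an abelian scheme over $S$ via its N\'eron model, and the isogeny extends by \eqref{eq:neron}.

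Next, for (iii), I would write $\End^0(A) \cong \textup{M}_n(D)$ with $D = \End^0(B)$ and again use that $\textup{Lie}(A)$ is one-dimensional over $L$. Lemma~\ref{lem:Centralizer} applied with $K=\QQ$ and matrix size $[L:\QQ]$ identifies the centralizer of $L$ in $\End_\QQ(\textup{Lie}(A)) \cong \textup{M}_{[L:\QQ]}(\QQ)$ with $L$ itself, and restricting along the faithful inclusion $\textup{M}_n(D) \hookrightarrow \End_\QQ(\textup{Lie}(A))$ shows the centralizer of $L$ in $\textup{M}_n(D)$ is contained in $L$, with equality by commutativity of $L$. For (ii) I would combine (iii) with the theory of central simple algebras: the center $K_0 = Z(D)$ of $\textup{M}_n(D)$ lies in the centralizer of $L$, hence $K_0 \subseteq L$, and $L$ is a maximal commutative $K_0$-subalgebra of the central simple $K_0$-algebra $\textup{M}_n(D)$, so $[L:K_0]^2 = \dim_{K_0}\textup{M}_n(D) = n^2 d^2$ for $d^2 = [D:K_0]$, giving $[L:K_0] = nd$. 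Combined with $[L:\QQ] = \dim A = n\dim B$ this yields $\dim B = d\,[K_0:\QQ]$. On the other hand $\textup{Lie}(B)$ is a free $D$-module, so $[D:\QQ] = d^2[K_0:\QQ]$ divides $\dim B$, and comparing the two relations forces $d=1$. Hence $D = K_0 =: K$ is a number field with $[K:\QQ] = \dim B$ and $[L:K] = n$, and $K$ is identified with a subfield of $L$ via $K_0 \subseteq L$, equivalently via the scalar matrices in $\textup{M}_n(K) = \End^0(B^n)$.

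Finally, for (iv) I would apply the Serre-tensor construction of Lemma~\ref{lem:abtensorproduct} to the $B$ of (ii), with $\mathcal{O} = \End(B)$ and $\mathcal{O}' = \OL_K$: both are orders of the number field $K = \End^0(B)$, so the hypotheses are met, and the resulting $B' = \OL_K \otimes_{\End(B)} B$ is isogenous to $B$ (hence also satisfies $(*)$) with $\OL_K \hookrightarrow \End(B') \subseteq \End^0(B') = K$. Since $\End(B')$ is an order of $K$ containing $\OL_K$, equality $\End(B') = \OL_K$ holds, which is Dedekind. The main obstacles I anticipate are justifying the Lie-algebra one-dimensionality in (i) and the clean deduction that $D$ is commutative in (ii); once those are in hand the remainder is essentially formal bookkeeping.
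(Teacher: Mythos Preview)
Your proposal is correct and arrives at the same conclusions, but the route differs from the paper's in two notable ways.

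First, for (i) and (ii) the paper simply cites Ribet's \cite[Thm~2.1]{ribet:gl2} as a black box, whereas you reconstruct that argument from scratch via the faithful action of $\End^0(A_\QQ)$ on the one-dimensional $L$-vector space $\textup{Lie}(A_\QQ)$: the central idempotents of $\prod_i \textup{M}_{n_i}(D_i)$ are $L$-linear idempotents on a line, hence all but one vanish, and the divisibility $[D:\QQ]\mid\dim B$ forces the Schur index $d=1$. This is exactly Ribet's reasoning unpacked, so what you gain is self-containment rather than a genuinely new mechanism.

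Second, and more interestingly, you invert the logical dependence between (ii) and (iii). The paper proves (ii) first (via Ribet) to obtain $\End^0(A)\cong\textup{M}_n(K)$ with $[L:K]=n$, and then deduces (iii) by applying Lemma~\ref{lem:Centralizer} inside $\textup{M}_n(K)$. You instead prove (iii) directly by embedding $\End^0(A)$ into $\End_\QQ(\textup{Lie}(A_\QQ))\cong\textup{M}_g(\QQ)$ and applying Lemma~\ref{lem:Centralizer} with $K=\QQ$; then (ii) falls out of (iii) via the double centralizer theorem for central simple algebras. Your route makes (iii) independent of the structure of $D$, at the cost of invoking the Lie-algebra faithfulness (which you also need for (i) anyway) and a standard fact about maximal subfields of central simple algebras. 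The paper's route keeps (iii) purely algebraic once (ii) is in hand, but leans on an external reference for (ii).

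For (iv) the two arguments coincide: both apply Lemma~\ref{lem:abtensorproduct} with $\mathcal O'=\mathcal O_K$, and you are slightly more explicit than the paper in spelling out why $\mathcal O_K\hookrightarrow\End(B')$ forces equality (since $\End(B')$ is an order of $K$ and $\mathcal O_K$ is maximal).
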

\begin{proof}
We first prove  (i). The generic fiber $A_\QQ$ of $A$ is an abelian variety over $\QQ$ of $\GL_2$-type by \eqref{eq:neron}. Hence \cite[Thm 2.1]{ribet:gl2} gives $n\in\ZZ_{\geq 1}$ together with a simple abelian variety $B_\QQ$ over $\QQ$  such that there is an isogeny $f_\QQ:A_\QQ\to B_\QQ^n$. Then, on using that $A_\QQ$ extends to $A$ over $S$,  results in \cite{bolura:neronmodels} imply the following: The direct factor $B_\QQ$ of $B_\QQ^n$  extends to an abelian scheme $B$ over $S$ and $f_\QQ$ extends to an isogeny  $f:A\to B^n$. This proves  (i). 

To show (ii) we take an abelian scheme $B$ over $S$ with $(*)$ and we consider a number field  $L/\QQ$ of degree $g$ which is a subring of $\End^0(A)$. The simple factors of an abelian variety over a field are unique up to isogeny, the endomorphism algebras of isogenous abelian schemes over $S$ are isomorphic by \eqref{def:phiupstar}, and the endomorphism ring of an abelian scheme over $S$ identifies via \eqref{eq:neron} with the endomorphism ring of its generic fiber. Hence the arguments in the proof of \cite[Thm 2.1]{ribet:gl2} give that the $\QQ$-algebra $\End^0(B)$ is isomorphic to a subfield $K$ of $L$ with $[L:K]=n$ and $\dim B=[K:\QQ]$; here we note that our fields $K$ and $L$ are denoted by $F$ and $E$ in  \cite{ribet:gl2} respectively. This shows (ii).


We next prove  (iii). Assertion (i) gives $n\in\ZZ_{\geq 1}$ together with an abelian scheme $B$ over $S$ with $(*)$. In particular $A$ is isogenous to $B^n$ and thus the $\QQ$-algebra  $\End^0(A)$ is isomorphic to $\uM_n\bigl(\End^0(B)\bigl)$ by \eqref{def:phiupstar}. Now, we assume that $L/\QQ$ is a number field of degree $g$ which is a subring of $\End^0(A)$.   Then assertion (ii) provides an isomorphism of $\QQ$-algebras $\iota:\End^0(A)\isomto\uM_n(K)$ for some subfield $K$ of $L$ with $L/K$ of degree $n$. By transport of structure, the subring $\iota(L)$  of $\uM_n(K)$ becomes a $K$-vector space of dimension $n$. Therefore an application of Lemma~\ref{lem:Centralizer} with the fields $K$ and $\iota(L)$ implies (iii).

To show (iv) we use assertion (i) which gives an abelian scheme $B$ over $S$ with $(*)$. Our $A$ is of $\GL_2$-type and thus (ii) shows that $\End^0(B)$ is a number field. Let $\mathcal O'$ be the normalization  of the Noetherian ring $\OL=\End(B)$ in $\End^0(B)$. Then $\OL'$ is a Dedekind ring 
 and it is a maximal order of  $\OL\otimes_\ZZ\QQ=\End^0(B)$. 
Therefore  Lemma~\ref{lem:abtensorproduct}  gives an abelian scheme $C$ over $S$, isogenous to $B$, such that $\End(C)$  is isomorphic to the Dedekind ring $\mathcal O'$. Furthermore  $C$ has property $(*)$, since it is isogenous to $B$ which satisfies $(*)$.  This proves (iv) and thus completes the proof of Lemma~\ref{lem:gl2avstructure}.
\end{proof}

In Lemma~\ref{lem:gl2avstructure} one can not drop the assumption that the function field of $S$ is $\QQ$.

\section{Par\v{s}in constructions: Forgetting extra structures}\label{sec:parsin}


\noindent Roughly speaking a Par\v{s}in construction for a Diophantine set is a finite map from this set into a certain set of integral points of a moduli scheme of abelian varieties. Faltings' proof \cite{faltings:finiteness} of the Mordell conjecture uses the original Par{\v{s}}in construction~\cite{parshin:construction} for rational points of  curves of genus at least two. In \cite{rvk:intpointsmodellhms} the natural forgetful map was used as a Par\v{s}in construction in order to prove explicit bounds for the number and the height of integral points on any moduli scheme of elliptic curves. 

In the present paper we use the natural forgetful map of a Hilbert moduli scheme as a Par\v{s}in construction. We now introduce some notation and then we decompose the forgetful map in a way which will be very useful for the explicit study of its fibers. 

\subsection{Par\v{s}in construction for Hilbert moduli schemes} 
Let $Y$ be a scheme and let $\mathcal M$ be a Hilbert moduli stack. Suppose that $Y=M_\mathcal P$ is a Hilbert moduli scheme of some presheaf $\mathcal P$ on $\mathcal M$. Then we denote by $$\phi:Y\to \absg $$  the natural forgetful map of $Y=M_{\mathcal P}$ defined in \eqref{def:forgetfulmap}, which is induced by forgetting the extra structures. We shall use this map $\phi=\phi_{\mathcal P}$ as a Par\v{s}in construction for the points of $Y$. To study its fibers, we factor $\phi$ into maps which can be described in quite explicit terms. Before we describe our factorization of $\phi$, we introduce some terminology.

\paragraph{Presheaves.} Suppose that $\mathcal M=\mathcal M^I$ is the Hilbert moduli stack associated to some $L$, $I$ and $I_+$ as in Section~\ref{sec:hms}. In particular $g$ is the degree of $L/\QQ$. For any positive $d\in\ZZ$, we denote by $\mathcal A_{g,d}$ the moduli stack over $\ZZ$ parametrizing abelian schemes of relative dimension $g$  with a polarization of degree $d$. 
Let $M$ and $A_{g,d}$ be the presheaves on $\sch$ which send a scheme $T$ to the set of isomorphism classes of objects of the small categories $\mathcal M(T)$ and $\mathcal A_{g,d}(T)$ respectively. 
The sheafifications of the presheaves $M$ and $A_{g,d}$ on $\sch$ with the \'etale topology identify with the coarse moduli spaces of the separated finite type stacks $\mathcal M/\ZZ$ and $\mathcal A_{g,d}/\ZZ$ respectively, see \cite[Rem 3.19]{lamo:stacks}. Let
\begin{equation}\label{eq:forgetfulpolav}
\phi_\lambda:A_{g,d}\to \absg
\end{equation}
be the morphism of presheaves on $\sch$ which is induced by forgetting the polarization, where $\absg$ is as in \eqref{def:forgetfulmap}.  Narasimhan--Nori~\cite[Thm 1.1]{nano:polarizations} proved that  $\psi$ is finite over any (algebraically closed) field. 
However the fibers of $\phi_\lambda$ are very complicated,  and it seems to be difficult to explicitly control these fibers even over $\QQ$ or $\CC$. 
In view of this we do not factor our Par\v{s}in construction $\phi$ through $\phi_\lambda$. Instead we work with the morphism 
\begin{equation}\label{def:forgetpol}
\phi_\varphi:M\to \abomult
\end{equation}
of presheaves on $\sch$ which is induced by forgetting the $I$-polarization $\varphi$ of an object $(A,\iota,\varphi)$ of $\mathcal M$. Here $\abomult$ denotes the presheaf on $\sch$ which sends a scheme $T$ to the set of isomorphism classes of $\OL$-abelian schemes over $T$ of relative dimension $g$, where $\OL$ is the ring of integers of $L$. Now, the advantage is that the fibers of $\phi_\varphi$ are much simpler than those of $\phi_\lambda$. The main reason for this is that $I$-polarizations are compatible with the involved $\OL$-structure. In fact we shall show in Section~\ref{sec:polarizations} that this compatibility allows to explicitly control the fibers of $\phi_\varphi$ by working with orders which are all commutative.

\paragraph{Decomposition of the Par\v{s}in construction.}
We are now ready to decompose the Par\v{s}in construction $\phi:Y\to\absg$  of the Hilbert moduli scheme $Y=M_{\mathcal P}$. 
In view of Lemma~\ref{lem:moduli}, forgetting the involved $\mathcal P$-level structure $\alpha$ induces a morphism
\begin{equation}\label{def:forgetlvl}
\phi_\alpha: Y\to \hilbmod   
\end{equation}
of presheaves on $\sch$. 
Then we  map $\hilbmod$ to $\abomult$ via the morphism $\phi_\varphi:\hilbmod\to \abomult$ defined in \eqref{def:forgetpol}. Next, we consider the natural forgetful map $\abomult\to \absg$ and we denote by $(A,\iota)$ an $\OL$-abelian scheme of relative dimension $g$. Tensoring the ring morphism $\iota:\OL\to \End(A)$ with $\QQ$ gives an embedding $\iota\otimes\QQ$ of $L$ into $\End^0(A)$ and thus $A$ is of $\GL_2$-type. Hence forgetting the $\OL$-module structure $\iota$ induces a morphism
\begin{equation}\label{def:forgetendo}
\phi_\iota: \abomult\to M_{\GL_2,g}\hookrightarrow \absg
\end{equation}
of presheaves on $\sch$, 
 where $M_{\GL_2,g}$ is the presheaf on $\sch$ which we defined in \eqref{def:gl2functor}. Here we may and do naturally identify $M_{\GL_2,g}$ with the subfunctor  $M_{\GL_2,g}\hookrightarrow\absg$ formed by those abelian schemes which are of product $\GL_2$-type. Finally we observe that the Par\v{s}in construction $\phi:Y\to \absg$ decomposes into the following morphisms 
\begin{equation}\label{eq:forgetfuldecomp}
\phi: Y\to^{\phi_\alpha} \hilbmod\to^{\phi_\varphi} \abomult\to^{\phi_\iota} M_{\GL_2,g}\hookrightarrow \absg.
\end{equation}
In the coming sections, we study in detail the fibers of the morphisms $\phi_\alpha$, $\phi_\varphi$ and $\phi_\iota$. In particular we explicitly control the fibers over any open subscheme  of $\spec(\ZZ)$.





\section{Effective Shafarevich conjecture}\label{sec:es}

In this section we review various explicit finiteness results obtained in \cite{rvk:modularhms}, including the effective Shafarevich conjecture for abelian varieties of $\GL_2$-type.

Let $S$ be a nonempty open subscheme of $\spec(\ZZ)$ and let $g\geq 1$ be an integer. Shafarevich conjectured that the set $\absg(S)$ of isomorphism classes of abelian schemes over $S$ of relative dimension $g$ is finite.  Faltings~\cite{faltings:finiteness} proved Shafarevich's conjecture for polarized abelian schemes, and  Zarhin~\cite{zarhin:shafarevich} showed that one can get rid of the polarizations.

\paragraph{Height.}  To state the effective Shafarevich conjecture~$\ces$, we denote by $h_F(A)$ the stable Faltings height of (the generic fiber of) an arbitrary abelian scheme $A$ over $S$ of relative dimension $g$. We refer to Section~\ref{sec:hms} for the definition of $h_F$.

\vspace{0.3cm}
\noindent{\bf Conjecture~$\ces$.}
\emph{There exists an effective constant $c$, depending only on $S$ and $g$, such that any abelian scheme $A$  over $S$ of relative dimension $g$ satisfies $h_F(A)\leq c.$}
\vspace{0.3cm}

\noindent This conjecture has interesting Diophantine applications. For example, it implies the effective Mordell conjecture for curves of genus at least two defined over arbitrary number fields; see \cite[Prop 9.1]{rvk:modularhms} which uses the explicit Kodaira construction of R\'emond~\cite{remond:construction}.  Conjecture~$\ces$ was established in \cite{rvk:modularhms} for abelian schemes over $S$ of product $\GL_2$-type; the proof combines Faltings' method \cite{faltings:finiteness} with Serre's modularity conjecture~\cite{khwi:serre}, isogeny estimates in \cite{faltings:finiteness,raynaud:abelianisogenies} or \cite{mawu:abelianisogenies,gare:isogenies} and explicit results based on Arakelov theory~\cite{bost:lowerbound,javanpeykar:belyi}. More precisely, if $A$ is an abelian scheme over $S$ of relative dimension $g$ which is of product $\GL_2$-type, then  \cite[Thm 9.2]{rvk:modularhms} gives
\begin{equation}\label{eq:esgl2}
h_F(A)\leq (3g)^{144g}N_S^{24}.
\end{equation}
Here $N_S=\prod p$ is the product of all rational primes $p$ not in $S$. The function field of $S$ is $\QQ$. Hence an abelian scheme $A$ over $S$ of relative dimension $g$ is of product $\GL_2$-type if and only if $\End^0(A)$ contains a commutative semi-simple $\QQ$-subalgebra of degree $g$.

\paragraph{Number of isomorphism classes.}
Further, the set $M_{\GL_2,g}(S)$  of isomorphism classes of abelian schemes over $S$ of relative dimension $g$ which are of product $\GL_2$-type satisfies 
\begin{equation}\label{eq:qesgl2}
|M_{\GL_2,g}(S)|\leq (14g)^{(9g)^6}N_S^{(18g)^4}.
\end{equation}
This bound was established in \cite[Thm 9.6]{rvk:modularhms} by using a variant of the proof of \eqref{eq:esgl2}. 


\paragraph{Uniform isogeny estimates.} The proofs of \eqref{eq:esgl2} and \eqref{eq:qesgl2} use among other things the most recent version, due to Gaudron--R\'emond~\cite{gare:isogenies}, of the Masser--W\"ustholz isogeny estimates \cite{mawu:abelianisogenies,mawu:factorization} for abelian varieties over number fields. These isogeny estimates involve the height $h_F$ and on combining them with the height bound \eqref{eq:esgl2} one obtains the following result (see \cite[Cor 9.5]{rvk:modularhms}): If $A$ and $A'$ are isogenous abelian schemes over $S$ of relative dimension $g$ which are of product $\GL_2$-type, then there exists an isogeny $\psi:A\to A'$ whose degree $\deg(\psi)$  is bounded by 
\begin{equation}\label{eq:mindegboundshaf}
(14g)^{(12g)^5}N_S^{(37g)^3}.
\end{equation} 
We point out that  the above estimate is uniform in $A$ in the sense that it only depends on $S$ and $g$.  This will be crucial for our proofs given in Section~\ref{sec:endo} where we use \eqref{eq:mindegboundshaf} in order to reduce to the key case of Theorem~\ref{thm:endobound}. 

\section{Level structures}\label{sec:levelstructures}

Let $\mathcal M$ be a Hilbert moduli stack, let $\mathcal P$ be a moduli problem on $\mathcal M$ and let $S$ be a scheme.  In this section we prove basic properties of the quantity $|\mathcal P|_S$, defined in \eqref{def:maxlvl}, which is the maximal number  of $\mathcal P$-level structures on any object of $\mathcal M(S)$.

Let $Y$ be a scheme. Suppose that $Y=M_{\mathcal P}$ is a Hilbert moduli scheme of $\mathcal P$ and  let $\phi_\alpha:Y\to \hilbmod$ be the morphism \eqref{def:forgetlvl} of presheaves on $\sch$ induced by forgetting $\mathcal P$-level structures. The following result is a formal generalization to arbitrary Hilbert moduli schemes of the lemma stated in \cite[Lem 3.1]{rvk:intpointsmodellhms} for moduli schemes of elliptic curves.

\begin{lemma}\label{lem:degforgetlvl}
For any scheme $S$, the degree of $\phi_\alpha(S)$ is at most $|\mathcal P|_S$.
\end{lemma}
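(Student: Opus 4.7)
\textbf{Proof plan for Lemma~\ref{lem:degforgetlvl}.} The plan is to invoke Lemma~\ref{lem:moduli} to give a concrete description of the fibers of $\phi_\alpha(S)$, after which the claim reduces to unwinding the definition of an isomorphism between pairs.

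First, I would use Lemma~\ref{lem:moduli} to identify $Y(S)$ with the set of isomorphism classes $[(x,\alpha)]$, where $x$ is an object of $\mathcal M(S)$ and $\alpha\in\mathcal P(x)$. Under this identification the forgetful morphism $\phi_\alpha(S):Y(S)\to \hilbmod(S)$ sends the class of $(x,\alpha)$ to the isomorphism class $[x]$ of $x$ in $\hilbmod(S)$, which is precisely the description furnished by \eqref{def:forgetlvl} combined with Lemma~\ref{lem:moduli}.

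Next, for a fixed $[x]\in \hilbmod(S)$ let $F=\phi_\alpha(S)^{-1}([x])$. I plan to exhibit a surjection $\mathcal P(x)\twoheadrightarrow F$ given by $\alpha\mapsto[(x,\alpha)]$. To verify surjectivity, take any $[(x',\alpha')]\in F$; by definition of $F$ there exists an isomorphism $f:x\to x'$ in $\mathcal M(S)$, and setting $\alpha=\mathcal P(f)(\alpha')\in\mathcal P(x)$ the defining relation $\alpha=\mathcal P(f)(\alpha')$ shows directly that $(x,\alpha)$ and $(x',\alpha')$ represent the same class in $Y(S)$ (see the paragraph following Lemma~\ref{lem:moduli}). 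This surjection yields $|F|\leq|\mathcal P(x)|\leq|\mathcal P|_S$, and the bound on $\deg(\phi_\alpha(S))$ follows from the definition of $\deg$.

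The argument is essentially formal and only uses that $\mathcal M$ is fibered in groupoids over $\sch$; no quotient by $\Aut(x)$ is even needed to obtain the bound $|\mathcal P|_S$. The only bookkeeping point to get right is the contravariant functoriality of the presheaf $\mathcal P$ on $\mathcal M$ when transporting a level structure $\alpha'\in\mathcal P(x')$ to an element of $\mathcal P(x)$ along an isomorphism $f:x\to x'$, so I do not anticipate any genuine obstacle.
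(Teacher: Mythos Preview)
Your proposal is correct and follows essentially the same approach as the paper: both use Lemma~\ref{lem:moduli} to identify the fiber over $[x]$ with isomorphism classes of pairs, then transport each representative $(x',\alpha')$ along an isomorphism $x\to x'$ to obtain a pair of the form $(x,\alpha)$ with $\alpha\in\mathcal P(x)$, yielding $|\phi_\alpha(S)^{-1}([x])|\leq |\mathcal P(x)|\leq |\mathcal P|_S$. The only cosmetic difference is that the paper phrases the transport step as ``after applying suitable isomorphisms we may assume all $x_i=x$,'' whereas you package it as an explicit surjection $\mathcal P(x)\twoheadrightarrow F$.
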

\begin{proof}
Lemma~\ref{lem:moduli} identifies the set $Y(S)$ with the set of isomorphism classes of pairs $(x,\alpha)$ with $x\in \mathcal M(S)$ and $\alpha\in\mathcal P(x)$. Further we recall that $M(S)$ is the set of isomorphism classes of objects of the small category $\mathcal M(S)$ and that $|\mathcal P|_S=\sup |\mathcal P(x)|$ with the supremum taken over all objects $x\in\mathcal M(S)$. Now, we proceed as in the proof of \cite[Lem 3.1]{rvk:intpointsmodellhms} and we suppose that $\{[(x_i ,\alpha_i)]\}$ is the fiber of $\phi_\alpha(S)$ over a point $[x]$ in $\hilbmod(S)$. Then all $x_i$ are isomorphic to $x$ in $\mathcal M(S)$. Therefore, after applying suitable isomorphisms in $\mathcal M(S)$, we may and do assume that all $x_i=x$ coincide. But then we conclude that $|\phi_\alpha(S)^{-1}([x])|\leq |\mathcal P(x)|\leq |\mathcal P|_S$, which proves Lemma~\ref{lem:degforgetlvl}. 
\end{proof}

The degree of $\phi_\alpha(S)$ is not necessarily finite. However, for those $\mathcal P$ of interest in arithmetic one can usually combine Lemma~\ref{lem:degforgetlvl} with geometric arguments to show that the degree of $\phi_\alpha(S)$ is finite and can be controlled explicitly. This normally involves a reduction to the geometric case when $S$ is the spectrum of an algebraically closed field, and this reduction can be done by using the following lemma. 

\begin{lemma}\label{lem:geomlvlred}If, moreover, $Y=M_{\mathcal P}$ is a variety over $\ZZ$,  then for each integral scheme $B$, any field $F$ containing the function field of $B$ and  all nonempty open  $T\subseteq S\subseteq B$, it holds $$\deg \phi_\alpha(S)\leq |\mathcal P|_S\leq |\mathcal P|_T\leq |\mathcal P|_F.$$
\end{lemma}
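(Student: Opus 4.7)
The first inequality is exactly Lemma~\ref{lem:degforgetlvl}. For the remaining two, my strategy is to establish, for any $x \in \mathcal M(S')$ and for each of the three morphisms $\pi \colon S'' \to S'$ listed below, the injectivity of the pullback map $\mathcal P(x) \to \mathcal P(x_{S''})$; taking cardinalities and then suprema over $x$ yields $|\mathcal P|_{S'} \leq |\mathcal P|_{S''}$, and chaining these with $\pi$ running through the open inclusion $T \hookrightarrow S$, then the inclusion of the generic point $\eta = \spec k(B) \hookrightarrow T$, and finally the field extension $\spec F \to \spec k(B)$ will give the full chain.

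The main tool is to reinterpret $\mathcal P(x)$ schematically. Since $\mathcal P \cong h_y$ is represented by $y \in \mathcal M(Y)$, the morphism $Y \to \mathcal M$ is representable by schemes, so for any $x \in \mathcal M(S')$ the $2$-fibre product $Y_x := Y \times_{\mathcal M, x} S'$ is a scheme over $S'$ and $\mathcal P(x) = \Hom_\mathcal M(x,y)$ is canonically identified with $Y_x(S')$. I will also need that $Y_x \to S'$ is separated; this follows from separatedness of $Y \to \mathcal M$, which holds because the composition $Y \to \mathcal M \to \spec \ZZ$ is separated (by hypothesis $Y$ is a variety over $\ZZ$) and $\mathcal M \to \spec \ZZ$ is separated (Section~\ref{sec:hms}); base change then gives separatedness of $Y_x \to S'$.

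With these preparations the three injections are immediate. For $T \hookrightarrow S$ with $S$ open in the integral scheme $B$, the scheme $S$ is integral hence reduced and $T$ is schematically dense in $S$; any two sections of the separated morphism $Y_x \to S$ that agree on $T$ therefore have a closed equalizer containing the dense $T$, forcing equality and yielding $Y_x(S) \hookrightarrow Y_x(T) = \mathcal P(x_T)$. The same argument applied to $Y_{x_T} \to T$ and the dense generic point $\eta \hookrightarrow T$ produces $\mathcal P(x_T) \hookrightarrow \mathcal P(x_\eta)$. Finally, $\mathcal P(x_\eta) \hookrightarrow \mathcal P(x_F)$ follows from the injectivity of $k(B) \hookrightarrow F$: a morphism $\spec k(B) \to Y_{x_\eta}$ is a point $p$ together with a ring map $\kappa(p) \to k(B)$, and this datum is recovered from its postcomposition with $\spec F \to \spec k(B)$ because $k(B) \hookrightarrow F$ is a monomorphism of rings. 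There is no genuine obstacle in this argument; the whole proof is formal once the identification $\mathcal P(x) = Y_x(S')$ and the separatedness of $Y_x \to S'$ are in place, with the most delicate point being the separatedness of $Y \to \mathcal M$ which one must justify carefully in the stack setting.
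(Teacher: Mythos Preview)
Your proof is correct and follows essentially the same route as the paper's. Both identify $\mathcal P(x)$ with the $S'$-points of the fibre product $Y_x = Y \times_{\mathcal M} S'$ (the paper writes this as the $\textnormal{Isom}$ scheme), establish that $Y_x \to S'$ is separated, and then use the equalizer/density argument to get the injections; the only cosmetic differences are that the paper handles the step from $T$ to $F$ in one go via the dense generic point rather than splitting off the field-extension injection, and that the paper proves separatedness of $Y_x\to S$ via the explicit factorization through $S\times_\ZZ Y$ and the properness of the diagonal of $\mathcal M$, which is exactly what underlies your cancellation argument for $Y\to\mathcal M$.
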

\begin{proof}
Lemma~\ref{lem:degforgetlvl} gives the first inequality. To prove the remaining inequalities, we freely use the terminology and results of \cite[$\mathsection 4$]{demu:stacks}. By assumption $Y$ is a Hilbert moduli scheme of $\mathcal P$. Hence there exists $y\in\mathcal M(Y)$ which represents the presheaf $\mathcal P$ on $\mathcal M$. Let $S$ be an arbitrary scheme, let $x\in\mathcal M(S)$ and consider the presheaf $Y_x$ on $\sch/S$ which sends an $S$-scheme $\varphi:S'\to S$ to the set $\mathcal P(\varphi^*x)$. 
The diagonal of the algebraic stack $\mathcal M/\ZZ$ is representable 
 and hence $Y_x$ is represented by the $S$-scheme $Y_x=\textnormal{Isom}(U,p_1^*x,p_2^*y)$,
where $p_1:U\to S$ and $p_2:U\to Y$ are the natural projections of $U=S\times_\ZZ Y$.  In particular, for any $S$-scheme $\varphi:S'\to S$ the pullback $x'=\varphi^*x$ of $x$ lies in $\mathcal M(S')$ and satisfies
\begin{equation}\label{eq:yxcomp}
Y_x(S')\cong \mathcal P(x').
\end{equation}
The structure morphism  of the $S$-scheme $Y_x$ factors as $p_1p_U$ for $p_U:Y_x\to U$ the projection. The morphism $p_1:Y_S\to S$ is separated finite type, since $Y$ is a variety over $\ZZ$.  Further, $p_U:Y_x\to U$ is a pullback of the diagonal $\mathcal M\to \mathcal M\times_\ZZ \mathcal M$ and this diagonal is proper since $\mathcal M/\ZZ$ is separated. 
It follows that $Y_x$ is a variety over $S$.

From now on we assume that $T$, $S$, $B$ and $F$ are as in the statement of the lemma. We first show that $|\mathcal P|_S\leq |\mathcal P|_T$. To obtain an inclusion $Y_x(S)\hookrightarrow Y_x(T),$  
we use that $Y_x$ is a separated $S$-scheme which assures that the equalizer of two $S$-scheme morphisms $S'\to Y_x$ is a closed subscheme of $S'$.  Further, any nonempty open subscheme of an irreducible scheme is dense and again irreducible, and $B$ is irreducible by assumption. Thus restriction from the irreducible $S$ to the open $T\subseteq S$ gives an inclusion $Y_x(S)\hookrightarrow Y_x(T)$ as desired.  Then applications of \eqref{eq:yxcomp} with $S'=S$ and $S'=T$ give that $|\mathcal P(x)|=|Y_x(S)|$ is at most $|Y_x(T)|=|\mathcal P(x')|$. In other words, for any object $x\in\mathcal M(S)$ there exists an object $x'\in\mathcal M(T)$ such that $|\mathcal P(x)|\leq |\mathcal P(x')|$. This implies that $|\mathcal P|_S\leq |\mathcal P|_T$ as claimed.  

 The proof of the inequality $|\mathcal P|_T\leq |\mathcal P|_F$ is essentially the same. Indeed an application  of the above  arguments, with $T$ and $F$ in place of $S$ and $T$ respectively, gives for any object $x\in\mathcal M(T)$ that $|\mathcal P(x)|=|Y_x(T)|$ is at most $|Y_x(F)|=|\mathcal P(x')|$. Here $x'=\varphi^*x$ lies in $\mathcal M(F)$ where $\varphi:\spec(F)\to T$ is induced by the inclusion $k(T)=k(B)\subseteq F$, and to see that  $Y_x(T)\to Y_x(F)$ is injective we now use that the image of $\varphi$ is the generic point of $T$ which is dense in the irreducible scheme $T$. 
This completes the proof of the lemma.
\end{proof}

\section{Polarizations}\label{sec:polarizations}

Let $L$ be a totally real number field of degree $g$ over $\QQ$ with ring of integers $\OL$. As in Section~\ref{sec:hms}, we denote by  $I$  a nonzero finitely generated $\OL$-submodule of $L$ together with a positivity notion $I_+$. In this section we study equivalence classes of $I$-polarizations on $\OL$-abelian schemes of relative dimension $g$. In particular, we uniformly control the number of such equivalence classes over certain base schemes by combining formal computations with Lemma~\ref{lem:gl2avstructure} and Dirichlet's unit theorem for orders in number fields. 

Let $S$ be a scheme and let $A$ be an $\OL$-abelian scheme over $S$ of relative dimension $g$. We recall that an $I$-polarization  on $A$ is an $\OL$-module morphism from $I$ to the $\OL$-module $\Hom_\OL(A,A^\vee)^{\textnormal{sym}}$, which maps $I_+$ to polarizations  and which  induces an isomorphism $I \otimes_{\OL} A \isomto A^\vee$. Let $\Phi$ be the set of $I$-polarizations on $A$. We say that  $\varphi,\varphi'\in\Phi$ are equivalent  if there exists an automorphism $\sigma$ of the $\OL$-abelian scheme $A$ such that $\varphi(\lambda)=\sigma^\vee\varphi'(\lambda)\sigma$ for all $\lambda\in I$.  The following result bounds the number of equivalence classes.

\begin{proposition}\label{prop:polbound}
Suppose that $S$ is a connected Dedekind scheme with function field $\QQ$. Then the set $\Phi$ decomposes into at most $2^g$ equivalence classes.  
\end{proposition}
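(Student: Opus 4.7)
The plan is to parametrize $\Phi$ by totally positive units of $\OL$, identify the equivalence classes with the quotient by squares, and apply Dirichlet's unit theorem. The first step is to identify the relevant endomorphism ring. Since $L\hookrightarrow\End^0(A)$ has degree $g=\dim A$, the abelian scheme $A$ is of $\GL_2$-type, and because the function field of $S$ is $\QQ$, Lemma~\ref{lem:gl2avstructure}(iii) gives that the centralizer of $L$ inside $\End^0(A)$ equals $L$ itself. Hence $\End^0_\OL(A)=L$, and since $\End_\OL(A)$ is an order of $L$ containing $\iota(\OL)$ while $\OL$ is the maximal order of $L$, one obtains $\End_\OL(A)=\iota(\OL)$ and $\Aut_\OL(A)=\iota(\OL^\times)$; the same conclusion holds for $A^\vee$ by duality.

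Assuming $\Phi\neq\emptyset$, fix a base point $\varphi\in\Phi$. For any other $\varphi'\in\Phi$, the induced maps $\varphi_*,\varphi'_*\colon I\otimes_\OL A\isomto A^\vee$ are $\OL$-linear isomorphisms of abelian schemes, so their composite $\varphi'_*\circ\varphi_*^{-1}$ lies in $\Aut_\OL(A^\vee)=\iota^\vee(\OL^\times)$ and produces a unique $u\in\OL^\times$ satisfying $\varphi'(\lambda)=\iota^\vee(u)\varphi(\lambda)=\varphi(u\lambda)$ for every $\lambda\in I$. The compatibility $\varphi'(I_+)\subseteq\textnormal{Pol}(A)$, combined with the standard correspondence between polarizations of $(A,\iota)$ and the totally positive cone $L_+$ (which I will draw from Lemma~\ref{lem:polformal} and Lemma~\ref{lem:psipolbound}), forces $u$ to be totally positive; conversely every $u\in\OL^\times_+$ yields a valid $I$-polarization via $\lambda\mapsto\varphi(u\lambda)$, giving a bijection $\Phi\cong\OL^\times_+$.

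An $\OL$-automorphism of $A$ is necessarily of the form $\sigma=\iota(v)$ with $v\in\OL^\times$ and has dual $\sigma^\vee=\iota^\vee(v)$; applying $\OL$-linearity of $\varphi'$ twice gives $\sigma^\vee\varphi'(\lambda)\sigma=\varphi'(v^2\lambda)=\varphi(uv^2\lambda)$. Equating this with $\varphi(\lambda)$ and using the injectivity of $\varphi$ on $I$ (a consequence of $\varphi_*$ being an isomorphism) yields $uv^2=1$. Consequently $\varphi\sim\varphi'$ if and only if $u\in(\OL^\times)^2$, so $\Phi/\!\sim$ is in bijection with $\OL^\times_+/(\OL^\times)^2\subseteq\OL^\times/(\OL^\times)^2$. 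Dirichlet's unit theorem for the totally real field $L$ of degree $g$ gives $\OL^\times\cong\{\pm 1\}\times\ZZ^{g-1}$, so $|\OL^\times/(\OL^\times)^2|=2^g$ and the claimed bound follows. The main obstacle in making this rigorous is the positivity step: verifying that the scalar $u$ produced by comparing two polarizations is automatically totally positive rather than merely a unit, which requires carefully unpacking the relative ampleness of $(1,\varphi(\lambda))^*\mathcal P_A$ and its translation into positivity of $u\in L$; this is precisely what the preceding Lemmas~\ref{lem:polformal} and~\ref{lem:psipolbound} of Section~\ref{sec:polarizations} are designed to package.
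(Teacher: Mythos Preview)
Your core argument is correct and follows the same strategy as the paper: embed $\Phi/{\sim}$ into units modulo squares and apply Dirichlet. A few remarks on the comparison and one genuine issue.

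Your approach is actually slightly more direct than the paper's. The paper first translates $\Phi$ into the set $\Psi$ of isomorphisms $A\isomto I^{-1}\otimes A^\vee$ via Lemma~\ref{lem:polformal}, and then in Lemma~\ref{lem:psipolbound} embeds $\Psi/{\sim}\hookrightarrow \Gamma^\times/\Gamma^{\times 2}$ for $\Gamma=\End_\OL(A)$ some unspecified order of $L$. To verify that $\psi\sim\psi'$ corresponds to a square, the paper invokes the Rosati involution and the fact that it acts trivially on the totally real field $L$. You bypass both detours: you work with $\Phi$ directly, you observe (correctly) that $\End_\OL(A)$ must equal $\OL$ since it is an order of $L$ containing the maximal order, and your computation $\sigma^\vee\varphi'(\lambda)\sigma=\varphi'(v^2\lambda)$ via $\OL$-linearity of $\varphi'(\lambda)$ is cleaner than the Rosati argument.

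The one genuine problem is your closing paragraph. You flag the total positivity of $u$ as the ``main obstacle'' and defer it to Lemmas~\ref{lem:polformal} and~\ref{lem:psipolbound}. This is doubly wrong. First, it is circular: Lemma~\ref{lem:psipolbound} \emph{is} the paper's proof of the proposition, not an auxiliary positivity lemma. Second, and more importantly, positivity is irrelevant to the bound. You only need the injection $\Phi/{\sim}\hookrightarrow \OL^\times/(\OL^\times)^2$, which your argument already establishes without knowing anything about the sign of $u$; the stronger bijection $\Phi\cong\OL^\times_+$ is unnecessary. Drop the positivity claim and the circular citation, and your proof stands on its own.
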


Let $\mathcal M$ be the Hilbert moduli stack associated to $L$, $I$ and $I_+$. Before we give a proof of the above proposition, we deduce a corollary which explicitly controls the degree of the natural forgetful map $\phi_\varphi \colon \hilbmod \to \abomult$ defined in \eqref{def:forgetpol} with respect to $\mathcal M$.

\begin{corollary}\label{cor:forgetpol}
For any $S$  as in Proposition~\ref{prop:polbound}, the degree of $\phi_\varphi(S)$ is at most $2^g$.
\end{corollary}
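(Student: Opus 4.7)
The strategy is to unpack definitions and identify the fiber of $\phi_\varphi(S)$ over a given point of $\abomult(S)$ with the set of equivalence classes of $I$-polarizations on a fixed $\OL$-abelian scheme, at which point Proposition~\ref{prop:polbound} finishes the job. So first I would fix a class $[(A,\iota)] \in \abomult(S)$ and consider an arbitrary element of the fiber $\phi_\varphi(S)^{-1}([(A,\iota)])$; by definition such an element is represented by a triple $(A',\iota',\varphi')$ in $\mathcal M(S)$ such that $(A',\iota')$ is isomorphic to $(A,\iota)$ as an $\OL$-abelian scheme over $S$.

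Next I would transport structures along such an isomorphism $(A',\iota') \isomto (A,\iota)$ in order to replace each representative by one of the form $(A,\iota,\varphi)$; this is a purely formal move and is allowed since we only record isomorphism classes in $M(S)$. After this reduction, two triples $(A,\iota,\varphi)$ and $(A,\iota,\varphi')$ define the same class in $M(S)$ precisely when there is an isomorphism $f \colon (A,\iota,\varphi) \to (A,\iota,\varphi')$ in $\mathcal M(S)$. Recalling from Section~\ref{sec:hms} that morphisms in $\mathcal M(S)$ are isomorphisms of $\OL$-abelian schemes intertwining the $I$-polarizations, the underlying map $f$ is an automorphism of $(A,\iota)$ and it satisfies $\varphi(\lambda) = f^{\vee} \varphi'(\lambda) f$ for every $\lambda \in I$. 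This is exactly the equivalence relation on the set $\Phi$ of $I$-polarizations on $(A,\iota)$ introduced before Proposition~\ref{prop:polbound}.

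Consequently the fiber $\phi_\varphi(S)^{-1}([(A,\iota)])$ is in canonical bijection with the set of equivalence classes in $\Phi$. Since $S$ is a connected Dedekind scheme with function field $\QQ$, Proposition~\ref{prop:polbound} applies and yields that this set has cardinality at most $2^{g}$. Taking the supremum over all classes $[(A,\iota)] \in \abomult(S)$ gives $\deg \phi_\varphi(S) \leq 2^{g}$.

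The only non-trivial input is Proposition~\ref{prop:polbound} itself; once that is available the corollary is a direct unpacking of the definition of $M(S)$ as a set of isomorphism classes. The main potential subtlety is simply making sure the reduction to a common underlying $(A,\iota)$ is carried out cleanly, but this is formal because isomorphism classes of triples are, by construction, what the presheaf $M$ records on $S$-points.
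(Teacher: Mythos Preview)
Your proof is correct and follows essentially the same approach as the paper: identify the fiber of $\phi_\varphi(S)$ over a class $[(A,\iota)]$ with the set $\Phi/{\sim}$ of equivalence classes of $I$-polarizations on $(A,\iota)$, then apply Proposition~\ref{prop:polbound}. The paper's version is slightly more terse and phrases the bijection starting from $M(S)$ rather than from the fiber, but the content is identical.
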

\begin{proof}
To prove the statement, we may and do assume that $\hilbmod(S)$ is nonempty. Take a class $[(A, \iota,\varphi)] \in \hilbmod(S)$ and let $\Phi$ be the set of $I$-polarizations on $(A,\iota)$. We observe that two $I$-polarizations  $\varphi,\varphi'\in \Phi$ are equivalent if and only if $(A,\iota,\varphi)$ and $(A,\iota,\varphi')$ define the same class in $\hilbmod(S)$. Hence mapping the class of $(A,\iota,\varphi)$ in $\hilbmod(S)$ to the equivalence class of $\varphi$ induces a bijection between the fiber of $\phi_\varphi$ over the class of $(A,\iota)$ in $\abomult(S)$ and the set $\Phi/\sim$. 
Thus the bound for $|\Phi/\sim|$ in Proposition~\ref{prop:polbound} implies the corollary.
\end{proof}

In what follows in this section we write $\otimes=\otimes_\OL$ for simplicity.
To prove Proposition~\ref{prop:polbound}, we compute the set $\Phi/\sim$ in terms of certain isomorphisms $A\isomto A^\vee\otimes I^{-1}$. Multiplication defines an isomorphism $\mu:I^{-1}\otimes I\isomto \OL$ of $\OL$-modules, since $I$ is invertible. Further, in the category of $\OL$-modules it holds: If $X,Y,Z$ are $\OL$-modules, then mapping any morphism $\varphi:X\to \Hom(Y,Z)$  to the morphism $X\otimes Y\to Z$ defined by $x\otimes y\mapsto \varphi(x)(y)$ gives a bijection $\Hom(X,\Hom(Y,Z))\isomto \Hom(X\otimes Y,Z)$ which is functorial. 
Thus, on using the isomorphism $I^{-1}\otimes I\otimes A\isomto A$ induced by $\mu$ and tensoring with $I^{-1}$, we obtain a bijection 
$$\tau:\Hom(I,\Hom_{\mathcal C}(A,A^\vee))\isomto \Hom_{\mathcal C}(A,I^{-1}\otimes A^\vee),$$
where $\mathcal C$ denotes the category of $\OL$-abelian schemes over $S$. To compute the set $\tau(\Phi)$  we take $\lambda\in I$. If $T$ is an $S$-scheme, then sending $j\otimes x\in I^{-1}\otimes A^{\vee}(T)$ to the element $j\lambda \otimes x$ in $\OL\otimes A^{\vee}(T)\cong A^{\vee}(T)$ defines a morphism $m_\lambda: I^{-1}\otimes A^{\vee}\to  A^{\vee}$ of $\OL$-abelian schemes over $S$.
We denote by $\Psi$ the set of isomorphisms $\psi: A\isomto I^{-1}\otimes A^\vee$ of $\OL$-abelian schemes over $S$ such that for each $\lambda\in I$ the morphism $m_\lambda \psi:A\to A^\vee$ is symmetric and moreover a polarization if $\lambda\in I_+$.  
The following formal lemma computes $\tau(\Phi)$.

\begin{lemma}\label{lem:polformal}
It holds $\tau(\Phi)=\Psi$.  
\end{lemma}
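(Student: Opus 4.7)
The plan is to verify $\tau(\Phi) = \Psi$ by making $\tau(\varphi)$ explicit and then translating the conditions defining $\Phi$ into those defining $\Psi$ via the key identity $m_\lambda \circ \tau(\varphi) = \varphi(\lambda)$, valid for every $\lambda \in I$.

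First I would unravel $\tau$ on the level of functors of points. The tensor–Hom adjunction of $\OL$-modules, applied functorially in $T$, identifies $\varphi: I \to \Hom_{\mathcal C}(A,A^\vee)$ with a morphism $\tilde\varphi: I \otimes A \to A^\vee$ of $\OL$-abelian schemes, given on $T$-points by $\tilde\varphi(\lambda \otimes x) = \varphi(\lambda)(x)$; $\OL$-linearity of $\tilde\varphi$ follows from that of $\varphi$. The morphism $\psi = \tau(\varphi)$ is then the composition
\begin{equation*}
A \xleftarrow{\sim} I^{-1} \otimes I \otimes A \xrightarrow{\mathrm{id} \otimes \tilde\varphi} I^{-1} \otimes A^\vee,
\end{equation*}
where the left arrow inverts the isomorphism induced by $\mu: I^{-1} \otimes I \isomto \OL$. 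Fixing a representation $\mu^{-1}(1) = \sum_i j_i \otimes \lambda_i$ (so that $\sum_i j_i \lambda_i = 1$ in $\OL$), on $T$-points this reads $\psi(x) = \sum_i j_i \otimes \varphi(\lambda_i)(x)$.

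Second, I would establish $m_\lambda \psi = \varphi(\lambda)$. Unwinding the definition of $m_\lambda$ and using $\OL$-linearity of $\varphi$, on $T$-points
\begin{equation*}
m_\lambda \psi(x) = \sum_i (j_i \lambda) \cdot \varphi(\lambda_i)(x) = \sum_i \varphi(j_i \lambda \lambda_i)(x) = \varphi\Bigl(\lambda \sum_i j_i \lambda_i\Bigr)(x) = \varphi(\lambda)(x).
\end{equation*}
This identity matches the defining conditions term by term: for each $\lambda \in I$, symmetry of $\varphi(\lambda)$ is equivalent to symmetry of $m_\lambda \psi$, and $\varphi(\lambda)$ being a polarization for $\lambda \in I_+$ is equivalent to $m_\lambda \psi$ being one. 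Finally, $\tilde\varphi: I \otimes A \to A^\vee$ is an isomorphism if and only if $\psi$ is, since tensoring with the invertible $\OL$-module $I^{-1}$ is an autoequivalence of $\OL$-abelian schemes (with inverse given by tensoring with $I$), hence preserves and reflects isomorphisms. Combined with the previous observation, this produces a bijection between the conditions defining $\Phi$ and those defining $\Psi$, proving $\tau(\Phi) = \Psi$. The proof is entirely formal; the only subtlety to watch for is that all identities must be interpreted functorially in $T$-points, since the $\OL$-action on $I^{-1} \otimes A^\vee$, the symmetry condition, and the polarization condition are all stated at that level.
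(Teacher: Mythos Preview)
Your proof is correct and follows essentially the same route as the paper: both establish the key identity $m_\lambda\circ\tau(\varphi)=\varphi(\lambda)$ on $T$-points and then use it to match the three defining conditions (symmetry, positivity, isomorphism) of $\Phi$ with those of $\Psi$. The only cosmetic difference is that the paper verifies the identity by evaluating on a general simple tensor $j\otimes i\otimes x\in I^{-1}\otimes I\otimes A(T)$ and then argues the two inclusions $\tau(\Phi)\subseteq\Psi$ and $\Psi\subseteq\tau(\Phi)$ separately, whereas you fix a decomposition $1=\sum_i j_i\lambda_i$ and appeal directly to the fact that $\tau$ is already known to be a bijection on the ambient Hom-sets.
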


\begin{proof}

Suppose that $\varphi:I\to \Hom_{\mathcal C}(A, A^\vee)$ is a morphism of $\OL$-modules and denote by $\rho:I \otimes A \to A^\vee$ the induced morphism. Then the construction of $\tau$ shows that $\psi=\tau(\varphi)$ is the morphism $A\isomto I^{-1}\otimes A^\vee$ given by $\psi=( I^{-1}\otimes \rho)\mu_A^{-1}$, where  $\mu_A:I^{-1}\otimes I\otimes A\isomto A$ is induced by $\mu:I^{-1}\otimes I \isomto \OL$. We claim that each $\lambda\in I$ satisfies
\begin{equation}\label{eq:mlambda}
m_\lambda\psi=\varphi(\lambda).
\end{equation}
To verify this claim, we consider an $S$-scheme $T$ and we take $j\otimes i\otimes x$ in $I^{-1}\otimes I\otimes A(T)$. On using that $j\lambda\in\OL$ and that $\varphi(i):A(T)\to A^\vee(T)$ is a morphism of $\OL$-modules, we see that $m_\lambda\psi\mu_A(j\otimes i\otimes x)=\varphi(i)(j\lambda  x)$. Further, we compute that $\varphi(i)(j\lambda  x)=\varphi(\lambda)\mu_A(j\otimes i\otimes x)$ since $\varphi$ is a morphism of $\OL$-modules and since $ji\in \mathcal O$. This shows that $m_\lambda\psi\mu_A=\varphi(\lambda)\mu_A$, and then applying the inverse $\mu_A^{-1}$ of the isomorphism $\mu_A$ implies our claim \eqref{eq:mlambda}. 

Now, we see that $\tau(\Phi)$ is contained in $\Psi$. Indeed, if $\varphi\in \Phi$ then \eqref{eq:mlambda} implies that $\tau(\varphi)\in \Psi$ by using the properties of $\varphi\in\Phi$. 
To show the converse, that $\Psi$ is contained in $\tau(\Phi)$, we may and do assume that there exists $\psi\in\Psi$. Then the construction of $m_\lambda$ shows that $\lambda\mapsto m_\lambda\psi$ defines a morphism $\varphi:I\to \Hom_{\mathcal C}(A, A^\vee)$ of $\OL$-modules.  Furthermore, on using again that $ji\in \OL$ and that $\psi$ is a morphism of $\OL$-abelian schemes, we directly compute that $\tau(\varphi)\mu_A=\psi\mu_A$ and hence we obtain that $\tau(\varphi)=\psi$. Then an application of \eqref{eq:mlambda} with $\varphi$ and $\psi=\tau(\varphi)$ gives that $m_\lambda\psi=\varphi(\lambda)$ for each $\lambda\in I$, and thus $\varphi\in \Phi$ since $\psi$ lies in $\Psi$.  
This proves that $\Psi\subseteq\tau(\Phi)$, and we conclude that $\tau(\Phi)=\Psi$ as desired.\end{proof}

On using the canonical bijection $\tau:\Phi\isomto \Psi$ and the above defined equivalence relation on $\Phi$, we obtain an equivalence relation on $\Psi$ by transport of structure. The following result uses Dirichlet's unit theorem to bound the number of equivalence classes.

\begin{lemma}\label{lem:psipolbound}
Suppose that $S$ is a connected Dedekind scheme with function field $\QQ$. Then the set $\Psi$ decomposes into at most $2^g$ equivalence classes.
\end{lemma}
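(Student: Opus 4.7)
The plan is to introduce a base point $\psi_0\in\Psi$ (the statement is trivial if $\Psi=\emptyset$) and to encode each $\psi\in\Psi$ by the morphism $u_\psi=\psi_0^{-1}\circ\psi\colon A\to A$. Since both $\psi$ and $\psi_0$ are $\OL$-equivariant isomorphisms $A\isomto I^{-1}\otimes A^\vee$, the composite $u_\psi$ lies in $\Aut_{\OL}(A)$, giving an injection $\Psi\hookrightarrow\Aut_{\OL}(A)$.

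The heart of the argument is to show $\Aut_{\OL}(A)=\iota(\OL^\times)$. Because $\iota\colon\OL\hookrightarrow\End(A)$ provides a number field $L$ of degree $g=\dim A$ inside $\End^0(A)$, our $A$ is an abelian scheme of $\GL_2$-type in the sense of Section~\ref{sec:endopreliminaries}. Since $S$ is a connected Dedekind scheme with function field $\QQ$, Lemma~\ref{lem:gl2avstructure}(iii) then asserts that $L$ is its own centralizer in $\End^0(A)$, so that $\End^0_{\OL}(A)=L$. Hence $\End_{\OL}(A)=L\cap\End(A)$ is an order of $L$ containing the maximal order $\iota(\OL)$, and must therefore equal $\iota(\OL)$.

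Next, I would rewrite the equivalence relation on $\Psi$ (transported from $\Phi$ via the bijection $\tau$ of Lemma~\ref{lem:polformal}) in terms of the above injection. For $\sigma=\iota(s)\in\Aut_{\OL}(A)$ with $s\in\OL^\times$, unwinding the condition $\varphi(\lambda)=\sigma^\vee\varphi'(\lambda)\sigma$ under $\tau$ yields $\psi=(I^{-1}\otimes\sigma^\vee)\circ\psi'\circ\sigma$, and applying $\OL$-equivariance of $\psi'$ twice (through $\psi'\circ\iota(s)=(I^{-1}\otimes\iota^\vee(s))\circ\psi'$) collapses this to $\psi=\psi'\circ\iota(s^2)$. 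Writing $\psi=\psi_0\circ\iota(u)$ and $\psi'=\psi_0\circ\iota(u')$ with $u,u'\in\OL^\times$, we see that $\psi\sim\psi'$ if and only if $u\equiv u'\pmod{(\OL^\times)^2}$. Therefore $\Psi/{\sim}$ injects into $\OL^\times/(\OL^\times)^2$, and Dirichlet's unit theorem applied to the totally real field $L$ of degree $g$ gives $\OL^\times\cong\{\pm1\}\times\ZZ^{g-1}$, whence $|\OL^\times/(\OL^\times)^2|=2^g$, completing the argument.

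The principal obstacle is the centralizer identification in step two: this is exactly where the hypothesis $k(S)=\QQ$ enters the proof, through Lemma~\ref{lem:gl2avstructure} and its reliance on Ribet's isogeny decomposition together with Lemma~\ref{lem:Centralizer}. Once the description of $\Aut_{\OL}(A)$ is in place, the remaining steps are formal manipulations with $\OL$-equivariance and a direct application of Dirichlet's theorem.
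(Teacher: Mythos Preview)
Your proof is correct and follows the same overall architecture as the paper's: fix a base point $\psi_0$, inject $\Psi$ into $\Aut_{\OL}(A)$ via $\psi\mapsto\psi_0^{-1}\psi$, identify the $\OL$-automorphism group using Lemma~\ref{lem:gl2avstructure}(iii), and reduce the equivalence relation to squares of units before applying Dirichlet.

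There is one genuine simplification in your argument relative to the paper's. The paper only records that $\End_{\OL}(A)$ is \emph{some} order $\Gamma$ of $L$, and then, to show that the involution $f\mapsto\psi_0^{-1}(I^{-1}\otimes f^\vee)\psi_0$ is trivial on $\Gamma$, it identifies this involution with the Rosati involution attached to a polarization $\varphi_0(\lambda)$ and invokes the fact that Rosati acts trivially on a totally real field. You instead observe that $\Gamma$ must equal $\iota(\OL)$ (it contains the maximal order, hence equals it), so every $\sigma\in\Aut_{\OL}(A)$ is $\iota(s)$ for some $s\in\OL^\times$; then $\sigma^\vee=\iota^\vee(s)$ by definition of $\iota^\vee$, and the $\OL$-equivariance of $\psi'$ directly gives $(I^{-1}\otimes\sigma^\vee)\psi'\sigma=\psi'\iota(s^2)$. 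This bypasses the Rosati detour entirely. The paper's route is a bit more conceptual and, as it remarks at the end of its proof, extends to connected Dedekind bases whose function field is merely algebraic over $\QQ$ (where the CM case can occur), but for the statement as written your direct computation is shorter and perfectly adequate. One cosmetic point: you say ``applying $\OL$-equivariance of $\psi'$ twice,'' but in fact a single application suffices.
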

\begin{proof}
To estimate the cardinality of $\Psi/{\sim}$,  we may and do assume that there exists an isomorphism  $\psi_0:A\isomto I^{-1}\otimes A^\vee$ which lies in $\Psi$.  If $\iota:\OL\to \End(A)$ is the $\OL$-module structure of the $\OL$-abelian scheme $A$, then $\iota\otimes_\ZZ\QQ$ embeds $L$ into $\End^0(A)$ and the centralizer of $L$ in $\End^0(A)$ contains $\End^0_\mathcal C(A)=\End_\mathcal C(A)\otimes_\ZZ\QQ$. Further, $\OL$ is commutative and our $A/S$ satisfies the assumptions of Lemma~\ref{lem:gl2avstructure}. Thus Lemma~\ref{lem:gl2avstructure}~(iii) implies that $\End_\mathcal C^0(A)\cong L$. Hence $\End_{\mathcal C}(A)$ identifies with an order $\Gamma$ of $L$ whose group of units $\Gamma^\times$ identifies with $\Aut_\mathcal C(A)$, and $\psi \mapsto \psi_0^{-1}  \psi$ defines an injective map
$
j \colon \Psi \hookrightarrow \Gamma^\times$. Next we consider the subgroup $\Gamma^{\times 2}= \{\gamma^2\ |\ \gamma \in \Gamma^\times\}$ of $\Gamma^\times$. We claim that $j$ induces an embedding 
\begin{equation}\label{eq:InclusionModSqaures}
\Psi/{\sim} \hookrightarrow \Gamma^\times / \Gamma^{\times 2}.
\end{equation} 
To prove this claim, we suppose that $\psi,\psi'\in\Psi$ and we denote by $\varphi,\varphi'\in \Phi$ their preimages under $\tau$ respectively. Then $\psi\sim\psi'$  if and only if there exists  $\sigma \in \Aut_\mathcal C(A)$ with $
\varphi' =\sigma^\vee \varphi \sigma
$. Moreover, on applying the formal arguments of the proof of Lemma~\ref{lem:polformal}, we compute that $
\varphi' =\sigma^\vee \varphi \sigma
$ if and only if  
$
\psi' = (I^{-1}\otimes\sigma^\vee) \psi \sigma
$.
The latter equality is equivalent to 
$
\psi_0^{-1} \psi' = \psi_0^{-1} ( I^{-1}\otimes \sigma^\vee) \psi_0 \psi_0^{-1}  \psi  \sigma,
$
which in turn is equivalent to
$
j(\psi') = \sigma^\star j(\psi) \sigma  \in \Gamma^\times
$ 
for
$$f^\star = \psi_0^{-1}  (I^{-1}\otimes f^\vee)  \psi_0,\quad f \in \End_{\mathcal C}(A).$$
Let $\lambda\in I_+$ and denote by $^*$ the usual Rosati involution on $\End^0(A)$ associated to the polarization $\varphi(\lambda)$ of $A$, where $\varphi=\tau^{-1}(\psi_0)$ lies in $\Phi$ by Lemma~\ref{lem:polformal}. Then our $^\star$ coincides on $\Gamma$ with the restriction of the Rosati involution $^*$ to $\End_\mathcal C^0(A)$. Indeed, on using that any $f$ in $\End_\mathcal C(A)\cong \Gamma$ is a morphism of $\OL$-abelian schemes, we compute $f^\vee m_\lambda=m_\lambda (I^{-1}\otimes f^\vee)$ and then we deduce that $f^*=\varphi(\lambda)^{-1}f^\vee \varphi(\lambda)=f^\star$ inside $\End_\mathcal C^0(A)$ since $m_\lambda\psi_0=\varphi(\lambda)$ by \eqref{eq:mlambda}.  Our number field $L$ is totally real, and hence it follows for example from \cite[p.5]{lang:cm} that the positive involution
$^*$ acts as the identity on $L$ inside $\End^0(A)\subseteq \End^0(A_\CC)$. 
Thus, on exploiting that $\Gamma$ is commutative, we conclude that $\psi\sim\psi'$ if and only if $j(\psi') = \sigma^2 j(\psi)$ in  $\Gamma^\times$
for some $\sigma\in \Gamma^\times$.
This proves our claim in \eqref{eq:InclusionModSqaures}.  Dirichlet's unit theorem~\cite[p.81]{neukirch:ant} gives that the free part of $\Gamma^\times$ has rank $g-1$, since $L$ is totally real. Therefore $|\Gamma^\times / \Gamma^{\times 2}|$ is at most $|\mu(L)|\cdot 2^{g-1}$, where $\mu(L)=\{1,-1\}$ are the only roots of unity in $L$. 
It follows that $|\Gamma^\times / \Gamma^{\times 2}| \leq 2^g$ and then \eqref{eq:InclusionModSqaures} implies the lemma. We mention that our proof gives more generally a bound for any connected Dedekind scheme $S$ whose function field $k$ is algebraic over $\QQ$. Indeed  $\End^0_{\mathcal C}(A)$ embeds into $F=\End^0_\mathcal C(A_{\CC})$ via $k\to \CC$, a complex Lie algebra argument shows that either $F=L$ or $F/L$ is a quadratic CM extension, and the above arguments embed $\Psi/\sim$ into either $\Gamma^\times/\Gamma^{\times 2}$ or into $\Gamma^\times/N(\Gamma^{\times})$ for $N=N_{F/L}$ the norm; here we used that $^\star=^{ \ *}$ acts as complex conjugation on $\Gamma$.
\end{proof}

Finally, on combining the above lemmas we obtain Proposition~\ref{prop:polbound}.

\section{Endomorphism structures}\label{sec:endo}

 Let $\order$ be an order of an arbitrary number field $L$ of degree $g=[L:\QQ]$, and let $S$ be an open subscheme of $\spec(\ZZ)$.  In this section we study $\order$-structures on any abelian scheme $A$ over $S$ of relative dimension $g$. In particular, we explicitly control the number of such $\order$-structures by using inter alia isogeny estimates based on transcendence.

We start with a definition. Let $R$ be a not necessarily commutative ring and let $\Hom(\order,R)$ be the set of ring morphisms $\order\to R$. The unit group $R^\times$ of $R$ acts on  $\Hom(\order,R)$  by conjugation, that is $(r,f)\mapsto (\gamma\mapsto rf(\gamma)r^{-1})$ for $r\in R^\times$ and $f:\order\to R$ a ring morphism. We call an element $\rho$ of $\Hom(\order,R)/R^\times$ a $\order$-structure on $R$, and if $R$ is the endomorphism ring of an abelian scheme $A$ then we call $\rho$ a $\order$-structure on $A$.

\begin{theorem}\label{thm:endobound}
There are at most $d^{(2g+1)g}N(\idf)^{g+1} hl$  distinct $\order$-structures on any abelian scheme over $S$ of relative dimension $g$, where $d= (14g)^{(12g)^5}N_S^{(37g)^3}$. 
\end{theorem}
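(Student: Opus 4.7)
Fix an abelian scheme $A/S$ of relative dimension $g$ and let $\Sigma(A)$ denote its set of $\Gamma$-structures. We may assume $\Sigma(A)$ is nonempty, whence $L\hookrightarrow\End^0(A)$ and $A$ is of $\GL_2$-type. Following the strategy sketched in the introduction, the first and most technical step is a reduction to a key case in which $\End(A)$ is effectively replaced by $R=\uM_n(\OL_K)$ for some subfield $K\subseteq L$ with $[L:K]=n$. By Lemma~\ref{lem:gl2avstructure}~(i),(iv) there exists an abelian scheme $B/S$ with $\End(B)=\OL_K$ Dedekind and an isogeny $A\sim B^n$; by the uniform isogeny estimate \eqref{eq:mindegboundshaf}, I may pick such an isogeny $\psi\colon A\to B^n$ of degree $\delta\leq d$. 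Lemma~\ref{lem:endorelation} then yields $\delta\cdot\psi^*(\End(A))\subseteq\End(B^n)=\uM_n(\OL_K)$, and I would use this containment to transport any $\Gamma$-structure $\iota\colon\Gamma\to\End(A)$ to a ring morphism $\ZZ[\delta\Gamma]\to R$. The $\End(A)^\times$-conjugation on $\Sigma(A)$ matches $\psi^*(\End(A))^\times$-conjugation in $R$ up to an index loss controlled by a small power of $\delta$.

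\textbf{Key case.} It then remains to count $\Gamma_0$-structures on $R=\uM_n(\OL_K)$, where $\Gamma_0=\ZZ[\delta\Gamma]\subseteq L$ is an order whose conductor in $\OL$ is bounded in terms of $N(\idf)$ and $\delta$. Any ring morphism $\Gamma_0\to R$ tensors to an embedding $L\hookrightarrow\uM_n(K)$; restricting to the center $K$ of $\uM_n(K)$ picks out an embedding $\varphi\in\Hom(K,L)$, so the set of $\Gamma_0$-structures on $R$ decomposes into $|\Hom(K,L)|\leq g/n$ subsets $\Sigma_\varphi$ of $\varphi$-compatible structures. Fixing $\varphi$, the space $V=K^n$ becomes a free $L$-module of rank one, and Skolem--Noether allows me to identify $\Sigma_\varphi$ with the set $J_\varphi$ of isomorphism classes of $\Gamma_0$-submodules of $L$ of full $L$-rank; equivalently, with equivalence classes of fractional $\Gamma_0$-ideals of $L$ under $L^\times$-scaling.

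\textbf{Class monoid and assembly.} This embeds $J_\varphi$ into the monoid $C_{\Gamma_0}$ of such equivalence classes of (possibly non-invertible) fractional $\Gamma_0$-ideals. Following Lenstra's idea, to be formalised in Lemma~\ref{lem:classnumberbound}, I would write $C_{\Gamma_0}=\Pic(\Gamma_0)\cdot\mathcal I$, where $\mathcal I$ is an explicit finite set of representatives whose size is controlled by $N(\idf(\Gamma_0))$. The standard conductor exact sequence relating the class groups of $\Gamma_0$ and $\OL$ then bounds $|\Pic(\Gamma_0)|$ in terms of $h=|\Pic(\OL)|$ and $N(\idf(\Gamma_0))$. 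Combining these with the factor $|\Hom(K,L)|$, the index loss from the reduction step, and summing over the (bounded) choices of $(K,n,B)$, produces a bound of the claimed form $d^{(2g+1)g}N(\idf)^{g+1}hl$.

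\textbf{Main obstacle.} The hardest step is the reduction: one must track carefully how distinct $R^\times$-conjugacy classes in $\Sigma(A)$ map under $\psi^*$ to conjugacy classes on the matrix-algebra side, and control the conductor of $\ZZ[\delta\Gamma]$ in $\OL$ in terms of $\delta$ and $\idf$ without losing uniformity, so that the final bound remains polynomial in $d$ and $N(\idf)$. A secondary delicate point is the identification $\Sigma_\varphi\isomto J_\varphi$ in the key case: one must verify that the $\GL_n(\OL_K)$-conjugation action on $\varphi$-compatible embeddings coincides with $L^\times$-scaling on fractional $\Gamma_0$-ideals, which uses that $V$ has $L$-rank one and that the commutant of $L$ in $\uM_n(K)$ is $L$ itself (an instance of Lemma~\ref{lem:Centralizer}).
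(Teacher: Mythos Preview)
Your proposal is correct and follows essentially the same route as the paper: reduce via a minimal-degree isogeny to $B^n$ with $\End(B^n)=\uM_n(\OL_K)$, replace $\Gamma$ by $\ZZ[\delta\Gamma]$ using the two-sided commensurability from Lemma~\ref{lem:endorelation} (formalized as Proposition~\ref{prop:ordercompa}), decompose by $\varphi\in\Hom(K,L)$, embed each $\Sigma_\varphi$ into the class monoid $C_{\Gamma_0}$, and bound via Lenstra's decomposition $C_{\Gamma_0}=\Pic(\Gamma_0)\cdot\mathcal I$. Two minor points: there is no need to ``sum over choices of $(K,n,B)$'' since these are uniquely determined by $A$ up to isogeny (Lemma~\ref{lem:gl2avstructure}), and while your bound $|\Hom(K,L)|\leq g/n$ is sharper, the paper uses the cruder $|\Hom(K,L)|\leq l$, which is where the factor $l$ in the final statement comes from.
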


Here $l$ denotes the degree over $\QQ$ of a normal closure of $L/\QQ$, and $h$ is the class number of the ring of integers of $L$. Further $N_S=\prod p$ is the product of all rational primes $p$ not in $S$, and $N(\mathfrak f)=N_{L/\QQ}(\mathfrak f)$ is the norm of the conductor ideal $\mathfrak f$ of $\order$ (see \cite[p.79]{neukirch:ant}).

Let $A_\QQ$ be an abelian variety over $\QQ$ of dimension $g$, and let $d(A_\QQ)$ be the `minimal' isogeny degree of $A_\QQ$ defined in Remark~\ref{rem:abendo}. We  shall prove in \eqref{eq:abendomindeg} that there are at most $d(A_\QQ)^{(2g+1)g}N(\idf)^{g+1}hl$ distinct $\order$-structures on $\End(A_\QQ)$.  This together with \eqref{eq:neron} provides a more precise version of Theorem~\ref{thm:endobound} for any given abelian scheme $A$ over $S$.  On the other hand, the bound in Theorem~\ref{thm:endobound} is uniform for all abelian schemes over $S$ of relative dimension $g$. {This additional uniformity is crucial for  the following corollary which controls the degree of the natural forgetful map $\phi_\iota:\abomult\to \absg$ defined in \eqref{def:forgetendo}.  

\begin{corollary}\label{cor:forgetendo}
If $S$ is an open subscheme of $\spec(\ZZ)$, then $\deg \phi_\iota(S)\leq d^{(2g+1)g}hl$.
\end{corollary}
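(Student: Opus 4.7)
The plan is to identify the fibers of $\phi_\iota(S)$ with sets of $\OL$-structures on abelian schemes and then reduce to Theorem~\ref{thm:endobound} applied with $\order=\OL$.

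First, I would unpack the definitions. Recall that $\abomult(S)$ is the set of isomorphism classes of pairs $(A,\iota)$, where $A$ is an abelian scheme over $S$ of relative dimension $g$ and $\iota\colon\OL\to\End(A)$ is a ring morphism, and $\phi_\iota$ is induced by forgetting $\iota$. Two pairs $(A,\iota)$ and $(A,\iota')$ with the same underlying $A$ define the same class in $\abomult(S)$ if and only if there exists $\sigma\in\Aut(A)=\End(A)^\times$ such that $\iota'(x)=\sigma\iota(x)\sigma^{-1}$ for all $x\in\OL$. Hence for every class $[A]\in\absg(S)$ there is a canonical injection
\[
\phi_\iota(S)^{-1}([A])\hookrightarrow \Hom(\OL,\End(A))/\End(A)^\times,
\]
and the right-hand side is by definition the set of $\OL$-structures on the endomorphism ring $\End(A)$ in the sense introduced just before Theorem~\ref{thm:endobound}.

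Next, I would apply Theorem~\ref{thm:endobound} with $\order=\OL$. Since $\OL$ is the ring of integers of $L$, it is the maximal order of $L$, and therefore its conductor ideal $\idf$ in $\OL$ equals $\OL$ itself, so $N(\idf)=1$. Theorem~\ref{thm:endobound} then bounds the number of $\OL$-structures on any abelian scheme over $S$ of relative dimension $g$ by
\[
d^{(2g+1)g}\cdot N(\idf)^{g+1}\cdot hl = d^{(2g+1)g}\cdot hl.
\]
Combining this with the embedding above and taking the supremum over $[A]\in\absg(S)$ yields the claimed inequality $\deg\phi_\iota(S)\leq d^{(2g+1)g}hl$.

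There is no serious obstacle: the corollary is a formal consequence of Theorem~\ref{thm:endobound}, with the only two points requiring attention being the identification of the fibers of $\phi_\iota(S)$ with $\OL$-structures up to $\Aut(A)$-conjugation, and the observation that the conductor contribution trivializes for the maximal order, so that $N(\idf)^{g+1}=1$.
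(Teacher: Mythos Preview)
Your proposal is correct and follows essentially the same route as the paper: identify the fiber of $\phi_\iota(S)$ over $[A]$ with (a subset of) the set of $\OL$-structures on $A$, then apply Theorem~\ref{thm:endobound} with $\order=\OL$ and note that $N(\idf)=1$ since $\OL$ is the maximal order. The paper in fact shows that the fiber is in bijection with the set of $\OL$-structures on $A$ (not merely an injection), but your injection already suffices for the bound.
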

\begin{proof}
We denote by $\OL$ the ring of integers of $L$. To bound the degree of $f=\phi_\iota(S)$, we may and do assume that there exists a point $P=[A]$ in $\absg(S)$. Let $\iota$ and $\iota'$ be ring morphisms $\mathcal O\to \End(A)$. The $\OL$-abelian schemes $(A,\iota)$ and $(A,\iota')$ are isomorphic if and only if there exists $r\in \Aut(A)$ with $r\iota(\gamma)r^{-1}=\iota'(\gamma)$ for all $\gamma\in \mathcal O$. Thus $\iota$ and $\iota'$ define the same $\OL$-structure on $A$ if and only if $(A,\iota)$ and $(A,\iota')$ coincide in $\abomult(S)$.  Hence, after applying suitable isomorphisms of $\OL$-abelian schemes over $S$, we see that the fiber $f^{-1}(P)$ over $P$ identifies with the set of pairs $(A,\rho)$ where $\rho$ is an $\OL$-structure on $A$. This shows that  $|f^{-1}(P)|$ coincides with  the number $n(A)$ of $\OL$-structures on $A$. We deduce  
\begin{equation}\label{eq:degsupbound}
\deg(f)\leq \sup_{A/S}n(A)
\end{equation}
with the supremum taken over all abelian schemes $A$ over $S$ of relative dimension $g$. For any such abelian scheme $A$, an application of Theorem~\ref{thm:endobound} with $\order=\OL$ gives an upper bound for $n(A)$ depending only on $g$, $h$, $l$ and $N_S$.  This bound together with \eqref{eq:degsupbound} leads to an estimate for $\deg(f)=\deg(\phi_\iota(S))$ as claimed in Corollary~\ref{cor:forgetendo}.\end{proof}

In the remaining of this section we prove Theorem~\ref{thm:endobound} by using the strategy outlined in the introduction. In Section~\ref{sec:endokeycase} we establish a sharper version of Theorem~\ref{thm:endobound} in the key case. Then  we show in Section~\ref{sec:endoreduction} how to reduce the problem to the key case, and finally we deduce the theorem by putting everything together in Section~\ref{sec:endoproof}. 
\subsection{The key case of Theorem~\ref{thm:endobound}}\label{sec:endokeycase}

We continue our notation and terminology. Let $\order$ be an order of an arbitrary number field $L$ of degree $g=[L:\QQ]$, and let $K\subseteq L$ be a subfield of relative degree $n=[L:K]$.   In this section, we assume throughout that $R=\uM_n(\mathcal O)$ where $\OL$ denotes\footnote{Throughout this section we write $\OL=\OL_K$ in order to simplify notation. We warn the reader that our notation is different in other sections where $\OL$ often denotes the ring of integers of $L$.} the ring of integers of $K$  and we study $\order$-structures on $R$.
In particular, we prove the following result.

\begin{proposition}\label{prop:endoboundmo}
There are at most $N(\mathfrak f)^{g}h(\order)t$ distinct $\order$-structures on $R$.
\end{proposition}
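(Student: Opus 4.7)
The plan has three main steps.

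First, partition $\Sigma$, the set of $\order$-structures on $R = \uM_n(\OL)$, according to embeddings $K \hookrightarrow L$. Given a ring morphism $\iota : \order \to R$, tensoring with $\QQ$ yields $\iota_\QQ : L \to \uM_n(K)$, and by Lemma~\ref{lem:Centralizer} the $K$-vector space $V = K^n$ becomes a one-dimensional $L$-vector space via $\iota_\QQ$. Comparing the tautological $K$-action on $V$ (from the scalars in $\uM_n(K)$) with the restriction of the $L$-action produces a unique embedding $\varphi : K \hookrightarrow L$ such that the two $K$-actions agree via $\varphi$. Since conjugation by $R^\times$ preserves $\varphi$, this yields a well-defined partition $\Sigma = \bigsqcup_\varphi \Sigma_\varphi$ indexed by $\Hom(K,L)$, and it suffices to prove $|\Sigma_\varphi| \leq N(\mathfrak f)^g \cdot h(\order)$ for each fixed $\varphi$.

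Second, identify $\Sigma_\varphi$ with a subset $J_\varphi$ of a class monoid. Fix $\varphi$. For $\iota \in \Sigma_\varphi$ the column module $\OL^n$ becomes a finitely generated torsion-free $\order$-module $M_\iota$ via $\iota$, whose rationalization $M_\iota \otimes_\ZZ \QQ$ is free of rank one over $L$; moreover $\varphi$-compatibility forces its underlying $\OL$-structure to coincide with the standard one on $\OL^n$. Two morphisms $\iota,\iota'$ are $R^\times = \GL_n(\OL)$-conjugate if and only if $M_\iota \cong M_{\iota'}$ as $\order$-modules, and conversely every such $\order$-module of the correct type arises from some $\iota$ after choosing an $\OL$-basis. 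Scaling by $L^\times$ then embeds $J_\varphi$ into the class monoid $C_\order$ of finitely generated $\order$-submodules of $L$ modulo principal equivalence, reducing the problem to bounding $|C_\order|$.

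Third, bound $|C_\order|$ via Lenstra's decomposition $C_\order = \Pic(\order) \cdot \mathcal I$, where $\mathcal I$ is a controlled set of $\order$-submodules lying between the conductor ideal $\mathfrak f$ and the maximal order $\OL_L$ of $L$. Any fractional $\order$-ideal $I$ can, after multiplication by a suitable element of $L^\times$ and twisting by an invertible ideal class, be arranged to satisfy $\mathfrak f \subseteq I \subseteq \OL_L$; the number of such $I$ is at most the number of subgroups of the finite group $\OL_L/\mathfrak f$, which has cardinality $N(\mathfrak f)$ and is generated by at most $g$ elements as an abelian group. A standard counting argument yields $|\mathcal I| \leq N(\mathfrak f)^g$, whence $|C_\order| \leq h(\order) N(\mathfrak f)^g$, and combining with Steps~1--2 gives $|\Sigma| \leq t \cdot N(\mathfrak f)^g h(\order)$ as claimed.

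The main obstacle is Step~2: one has to verify rigorously that $\varphi$-compatibility forces $M_\iota$ to be $\OL$-free of rank $n$ (not merely projective) so that the identification $R = \End_\OL(\OL^n)$ translates $\iota$ into a genuine $\order$-module structure on $\OL^n$, and conversely that $\GL_n(\OL)$-conjugation on morphisms corresponds \emph{exactly} to $\order$-module isomorphism of the resulting modules. Once this formal dictionary is in place, Step~3 is a bookkeeping exercise on the finite ring $\OL_L/\mathfrak f$, and Step~1 is a clean application of Lemma~\ref{lem:Centralizer}.
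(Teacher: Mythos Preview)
Your proposal is correct and follows essentially the same three-step strategy as the paper: decompose by embeddings $\varphi:K\to L$ (Lemma~\ref{lem:decomp}), identify each piece with isomorphism classes of $\Gamma$-modules and inject into $C_\Gamma$ (Proposition~\ref{prop:bijection} and Lemma~\ref{lem:isoembedding}), and bound $|C_\Gamma|$ via Lenstra's decomposition $C_\Gamma=\Pic(\Gamma)\cdot\mathcal I$ (Lemma~\ref{lem:classnumberbound} and \eqref{eq:orderinvarest}).

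The one technical point where your write-up differs from the paper is the handling of the obstacle you flag in Step~2. You assert that a $\Gamma$-module isomorphism $M_\iota\cong M_{\iota'}$ is automatically $\OL$-linear, i.e.\ lies in $\GL_n(\OL)=R^\times$. This is true, but note that $\varphi(\OL)$ need not lie in $\Gamma$, so there is no intrinsic $\OL$-structure on an abstract $\Gamma$-module to ``coincide with the standard one''. The paper circumvents this by introducing an auxiliary suborder $\mathcal O'=\ZZ[z\OL]\subseteq\OL$ with $\varphi(\mathcal O')\subseteq\Gamma$, so that every $\Gamma$-module has a canonical $\mathcal O'$-structure, and then uses $\End(\OL^n|_{\mathcal O'})=R$. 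Your approach can instead be completed directly: rationalize, use $\varphi$-compatibility of both $\iota_\QQ$ and $\iota'_\QQ$ to see that $f_\QQ$ intertwines the scalar $K$-actions and hence lies in $\GL_n(K)$, and then observe that a $K$-linear bijection $\OL^n\to\OL^n$ lies in $\GL_n(\OL)$. Either route works; yours is marginally cleaner once this point is made explicit.
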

Here $t$ denotes the number of ring morphisms $K\to L$, and $h(\order)=|\Pic(\order)|$ is the class number of $\order$ defined in Section~\ref{sec:fracideals}. Further we recall that  $N(\mathfrak f)$ is the norm of the conductor ideal $\mathfrak f$ of  $\order$. We observe that Proposition~\ref{prop:endoboundmo} is sharper than Theorem~\ref{thm:endobound}  in the key case when the endomorphism ring of the abelian scheme is $\uM_n(\mathcal O)$.  

The strategy of proof of Proposition~\ref{prop:endoboundmo} is as follows. In Section~\ref{sec:compatiblestruct}, we first  decompose the set of $\order$-structures on $R$ into ``compatible'' subsets and then  we identify in Proposition~\ref{prop:bijection} each of these subsets with the set of isomorphism classes of certain $\order$-module structures on $\mathcal O^n$. This allows us in Section~\ref{sec:fracideals} to construct an injective map
\begin{equation}\label{eq:fundendoinjection}
\Hom(\order,R)/R^\times\hookrightarrow \cup_\varphi C_\order
\end{equation}
with the disjoint union taken over all ring morphisms $\varphi:K\to L$. Here $C_\order$ denotes the monoid of (not necessarily invertible) fractional ideals of $\order$. Further, we use an idea of Lenstra  to describe $C_\order=\Pic(\order)\cdot\mathcal I$ as a product of $\Pic(\order)$ with a certain controlled set $\mathcal I$. This description of $C_\order$ then allows us to deduce Proposition~\ref{prop:endoboundmo} from \eqref{eq:fundendoinjection}.

\subsubsection{Compatible morphisms and classes of $\order$-module structures on $\mathcal O^n$}\label{sec:compatiblestruct}

We continue our notation and terminology. Recall that $\order$ is an order of an arbitrary number field $L$ and $\mathcal O$ is the ring of integers of a number field $K\subseteq L$ with $n=[L:K]$.  In this section we first decompose the set of $\order$-structures on $R=\uM_n(\mathcal O)$  into subsets consisting of morphisms which are compatible with some ring morphism $K\to L$. Then we identify each of these subsets with the set of isomorphism  classes of certain $\order$-modules.

\paragraph{Compatible morphisms.} 
Let $\varphi:K\to L$ be a morphism of rings and consider $L$ as a $K$-algebra via $\varphi$. We say that a ring morphism $\rho \colon \order \to R$ is $\varphi$-compatible if tensoring with $\QQ$ induces a morphism $
\rho_\QQ:L\to \uM_n(K)
$
 of $K$-algebras. Here we used the canonical identifications $\order\otimes_\ZZ\QQ=L$ and $R\otimes_\ZZ\QQ= \uM_n(K)$. 
If $\rho:\order\to R$ is a ring morphism which is $\varphi$-compatible and if $r\in R^\times$, then the ring morphism sending $\gamma$ to $r\rho(\gamma) r^{-1}$ is $\varphi$-compatible as well. Therefore the unit group $R^\times$ acts via conjugation on the set
$
\Hom_\varphi(\order, R)
$
of  $\varphi$-compatible ring morphisms $\order\to R$. We obtain the following lemma.
\begin{lemma}\label{lem:decomp}
For each $\rho \in \Hom(\order, R)$ there exists a unique ring morphism $\varphi \colon K \to L$ such that $\rho$ is $\varphi$-compatible. In particular, it holds $$\Hom(\order,R)/R^\times=\cup_{\varphi}\bigl(\Hom_\varphi(\order,R)/R^\times\bigl)$$ with the disjoint union taken over all ring morphisms $\varphi:K\to L$. 
\end{lemma}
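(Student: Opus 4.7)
The plan is to exploit the fact that $\rho \colon \order \to R$ rationalizes to an injection $\rho_\QQ \colon L \hookrightarrow \uM_n(K)$, and then to recover $\varphi$ intrinsically as the inverse of $\rho_\QQ$ restricted to the center. First I would note that $\rho(1)=1$, so $\rho_\QQ$ is a nonzero $\QQ$-algebra morphism from the field $L$ to $\uM_n(K)$, hence injective. Since $[L:\QQ]=g=n[K:\QQ]$, Lemma~\ref{lem:Centralizer} applied to the subring $\rho_\QQ(L) \subseteq \uM_n(K)$ gives that $\rho_\QQ(L)$ equals its own centralizer in $\uM_n(K)$, and in particular contains the center $K \cdot I_n$ of $\uM_n(K)$.

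For existence, I would then define $\varphi \colon K \to L$ by $\varphi(x) = \rho_\QQ^{-1}(x \cdot I_n)$, which is well-defined by the previous step and is a ring morphism as the composition of the two ring morphisms $x \mapsto x\cdot I_n$ and $\rho_\QQ^{-1}|_{K \cdot I_n}$. To verify $\varphi$-compatibility, it suffices to check $K$-linearity of $\rho_\QQ$ with $L$ viewed as a $K$-algebra via $\varphi$: for $x \in K$ and $\ell \in L$, the multiplicativity of $\rho_\QQ$ gives
\[
\rho_\QQ(\varphi(x)\ell)=\rho_\QQ(\varphi(x))\rho_\QQ(\ell)=(x\cdot I_n)\rho_\QQ(\ell)=x\cdot \rho_\QQ(\ell).
\]

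For uniqueness, if $\rho$ is both $\varphi$- and $\varphi'$-compatible, then $\rho_\QQ(\varphi(x)) = x\cdot I_n = \rho_\QQ(\varphi'(x))$ for all $x \in K$, and the injectivity of $\rho_\QQ$ forces $\varphi=\varphi'$. For the displayed decomposition it only remains to check that the sets $\Hom_\varphi(\order,R)/R^\times$ are mutually disjoint, i.e.\ that $R^\times$-conjugation preserves $\varphi$-compatibility. This is immediate: for $r \in R^\times$, the element $(r\rho r^{-1})_\QQ(\varphi(x))$ equals $r(x\cdot I_n)r^{-1} = x\cdot I_n$ since the scalar matrices are central in $\uM_n(K)$, so $r\rho r^{-1}$ remains $\varphi$-compatible with the same $\varphi$. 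Combined with existence this gives the disjoint union, finishing the lemma.

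There is essentially no obstacle here once Lemma~\ref{lem:Centralizer} is available; the only subtle point is the observation that the center $K \cdot I_n$ automatically sits inside $\rho_\QQ(L)$, which turns on the degree equality $[L:\QQ]=n[K:\QQ]$ forced by the hypotheses on $\order$ and $R$.
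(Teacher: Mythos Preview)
Your proof is correct and follows essentially the same approach as the paper: rationalize $\rho$ to an injection $\rho_\QQ\colon L\hookrightarrow \uM_n(K)$, invoke Lemma~\ref{lem:Centralizer} to see that the center $K\cdot I_n$ lands in the image, and define $\varphi$ as $\rho_\QQ^{-1}$ restricted to that center. Your uniqueness computation (taking $\ell=1$) is the special case $x=1$ of the paper's formula $\varphi(k)=\rho_\QQ^{-1}(k\rho_\QQ(x))x^{-1}$, and your explicit check that $R^\times$-conjugation preserves $\varphi$-compatibility matches what the paper notes just before the lemma.
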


\begin{proof}
We take $\rho\in \Hom(\order,R)$. The induced morphism $\rho_\QQ:L\to\uM_n(K)$  is injective since $L$ is a field. Then we obtain a $K$-vector space structure on $L'=\rho_\QQ(L)$ by using the bijection $\rho_\QQ:L\isomto L'$ and the $K$-action on $L$ given by multiplication with elements in $K\subseteq L$.  The dimension of this $K$-vector space $L'$ is $n=[L:K]$. Hence an application of Lemma~\ref{lem:Centralizer} with the subring $L'$ of $\uM_n(K)$ gives that $L'$ is  the centralizer of $L'$ in $\uM_n(K)$. In particular $L'$ contains the field $K$. Let $\varphi:K\to L$ be the restriction to $K\subseteq L'$ of the isomorphism $\rho_\QQ^{-1}:L'\isomto L$. Then we see that our $\rho$ is $\varphi$-compatible. If $\rho$ is in addition $\varphi'$-compatible for some ring morphism $\varphi':K\to L$, then it holds that $\varphi=\varphi'$. Indeed we compute that $\varphi(k)=\rho_\QQ^{-1}(k\rho_\QQ(x))x^{-1}=\varphi'(k)$ for each $k\in K$ and any nonzero $x\in L$.  It follows that $\Hom(\order,R)$ equals $\cup_{\varphi}\bigl(\Hom_\varphi(\order,R)\bigl)$ with the disjoint union taken over all ring morphisms $\varphi:K\to L$. This completes the proof of the lemma. 
\end{proof}

The above  decomposition shows that in order to determine the set of $\order$-structures on $R$ it suffices to know the sets  $\Hom_\varphi(\order, R)/R^\times$. In light of this, we next compute these sets in terms of isomorphism classes of certain $\order$-module structures on $\mathcal O^n$.

\paragraph{Classes of $\order$-module structures on $\mathcal O^n$.} Let $\varphi:K\to L$ be a ring morphism. We now construct a `large' subring $\mathcal O'\subseteq \mathcal O$ such that the restriction of $\varphi$ to $\mathcal O'$ is a ring morphism $\mathcal O'\to\order$.  Suppose that $\{\alpha_i\}$ generates $\mathcal O$ as a $\Z$-module. 
 Then, on using that $\order$ is an order of $L$, we may and do choose $z_i\in\Z$ with $z_i\varphi(\alpha_i)\in\order$. We put $z=\prod z_i$ and we 
let $\mathcal O' = \Z[z\mathcal O]$ be the subring of $\mathcal O$ generated by the subset $z\mathcal O \subseteq \mathcal O$. 
The restriction of $\varphi$ to $\mathcal O'$ is a ring morphism $\mathcal O'\to\order$, which gives any $\order$-module $X$ an $\mathcal O'$-module structure $X|_{\mathcal O'}$. We denote by $\mathcal O^n|_{\mathcal O'}$ the $\mathcal O'$-module given by $\mathcal O^n$ together with the inclusion $\mathcal O'\hookrightarrow \End_\ZZ(\mathcal O^n)$ coming from $\mathcal O'\subseteq \mathcal O\subseteq \End_\ZZ(\mathcal O^n)$. To simplify notation, we write here $\End_\ZZ=\End_{\mathcal C}$ for $\mathcal C$ the category of $\ZZ$-modules. Let
$\iS_{\varphi,z}$ be the set of isomorphism classes $[X]$ of $\order$-modules $X$ such that the $\mathcal O'$-modules $X|_{\mathcal O'}$ and $\mathcal O^n|_{\mathcal O'}$ are isomorphic. 
We write 
\begin{equation}\label{def:isoclassesofgamma}
\iS_{\varphi}=\iS_{\varphi,z}
\end{equation}
since the latter set does not depend on the choice of $\alpha_i$ and $z_i$. 
Indeed, if $z, z'$ are rational integers with $\varphi(z\mathcal O)$ and $\varphi(z'\mathcal O)$  both contained in $\order$, then it turns out that $\iS_{\varphi, z}\cong\iS_{\varphi, z'}$. 
We now identify $\iS_\varphi$ with the set of $\varphi$-compatible $\order$-structures on $R$.

\begin{proposition}\label{prop:bijection}
There exists a bijection $\Hom_\varphi(\order,R)/R^\times\cong \iS_\varphi.$ 
\end{proposition}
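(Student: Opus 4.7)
\emph{Plan of proof.} The plan is to construct mutually inverse maps directly. Given a $\varphi$-compatible ring morphism $\rho \colon \order \to R$, I would equip $X_\rho = \mathcal{O}^n$ with the $\order$-module structure $\gamma \cdot v = \rho(\gamma)(v)$, using that $R = \uM_n(\mathcal{O})$ acts standardly on $\mathcal{O}^n$. The $\varphi$-compatibility forces $\rho_\QQ(\varphi(k)) = k \cdot \Id$ inside $\uM_n(K)$ for every $k \in K$; in particular, for each $c \in \mathcal{O}'$ the element $\varphi(c)$ lies in $\order$ and $\rho(\varphi(c))$ acts as scalar multiplication by $c$ on $\mathcal{O}^n$. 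Thus $X_\rho|_{\mathcal{O}'}$ coincides with $\mathcal{O}^n|_{\mathcal{O}'}$, yielding a well-defined class $[X_\rho]$ in $\iS_\varphi$. Any $r \in R^\times = \GL_n(\mathcal{O})$ visibly defines an $\order$-module isomorphism $X_\rho \isomto X_{r\rho r^{-1}}$, so the construction descends to a map $\bar\Phi \colon \Hom_\varphi(\order,R)/R^\times \to \iS_\varphi$.

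For surjectivity, given $[X] \in \iS_\varphi$, I would fix an $\mathcal{O}'$-linear isomorphism $\psi \colon X|_{\mathcal{O}'} \isomto \mathcal{O}^n|_{\mathcal{O}'}$ and transport the $\order$-action on $X$ to a ring morphism $\rho \colon \order \to \End_\ZZ(\mathcal{O}^n)$. By construction, $\rho_\QQ$ restricts via $\varphi$ on $K \cong \mathcal{O}' \otimes_\ZZ \QQ$ to $K$-scalar multiplication, so $\rho_\QQ$ is $K$-linear and lands in $\uM_n(K)$. Combined with $\rho(\gamma)(\mathcal{O}^n) \subseteq \mathcal{O}^n$, this places $\rho$ inside $\End_{\mathcal{O}}(\mathcal{O}^n) = R$ and yields $\rho \in \Hom_\varphi(\order, R)$; a different choice of $\psi$ alters $\rho$ by $R^\times$-conjugation, so the resulting class in $\Hom_\varphi(\order, R)/R^\times$ is well-defined and maps back to $[X]$ under $\bar\Phi$.

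The main obstacle is the injectivity of $\bar\Phi$, which rests on the identity $\End_{\mathcal{O}'}(\mathcal{O}^n) = \End_\mathcal{O}(\mathcal{O}^n) = R$. Any $\order$-module isomorphism $X_\rho \isomto X_{\rho'}$ is in particular $\mathcal{O}'$-linear; after tensoring with $\QQ$ it becomes a $\QQ$-linear automorphism of $K^n$ that commutes with $\mathcal{O}'$, hence with the field $K = \mathcal{O}' \otimes_\ZZ \QQ$, and therefore with its subring $\mathcal{O}$. Such a map restricts to an element of $\GL_n(\mathcal{O}) = R^\times$ and realizes the required conjugation $\rho' = r\rho r^{-1}$. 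The specific choice $\mathcal{O}' = \ZZ[z\mathcal{O}]$, which ensures $\mathcal{O}' \otimes_\ZZ \QQ = K$, is precisely what makes this identity hold and is therefore the technical heart of the construction.
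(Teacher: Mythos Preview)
Your proposal is correct and follows essentially the same approach as the paper: both construct the forward map $[\rho]\mapsto[X_\rho]$ by endowing $\mathcal O^n$ with a $\Gamma$-module structure via $\rho$, build the inverse by transporting the $\Gamma$-action along a chosen $\mathcal O'$-linear isomorphism, and hinge everything on the identity $\End_{\mathcal O'}(\mathcal O^n)=\End_{\mathcal O}(\mathcal O^n)=R$, which you correctly identify as the technical heart. The only cosmetic difference is that the paper verifies the two maps are mutual inverses, whereas you argue surjectivity and injectivity of $\bar\Phi$ separately; the underlying computations are the same.
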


\begin{proof}
We explicitly construct a bijection between these two sets. To obtain a map $\Hom_\varphi(\order,R)/R^\times\to \iS_\varphi$, we take a $\varphi$-compatible $\order$-structure $[\rho]$ on $R$ and we give the abelian group $X=\mathcal O^n$ a $\order$-module structure $X_\rho$ via $\rho \colon \order \to R \subseteq \End_{\Z}(X)$. Suppose now that $\rho'$ lies in $[\rho]$. Then there exists $r\in R^\times$ with $r\rho=\rho'r$ and it follows that the morphism $X_{\rho} \to X_{\rho'}$ defined by $x \mapsto rx$ is an isomorphism of $\order$-modules. 
The identity morphism of $\mathcal O^n$ is  an isomorphism of $\mathcal O'$-modules $X_\rho|_{\mathcal O'}\cong\mathcal O^n|_{\mathcal O'}$, since $\rho$ is $\varphi$-compatible. 
Hence the $\order$-isomorphism class $[X_\rho]$ of $X_\rho$ lies in $\iS_\varphi$ and thus we obtain a map
$$\Hom_\varphi(\order, R)/R^\times\to \iS_\varphi, \ \ [\rho]\mapsto [X_\rho].$$
To construct an inverse of this map, we consider $[X]\in \iS_\varphi$. Suppose that the $\order$-module structure on $X$ is given by $\rho \colon \order\to\End_\Z(X)$, and let $\tau \colon X|_{\mathcal O'} \isomto \mathcal O^n|_{\mathcal O'}$ be an isomorphism.  We define $\rho_X \colon \order \to \End_{\Z}(\mathcal O^n)$ by $\gamma \mapsto \tau \rho(\gamma) \tau^{-1}$ for $\gamma\in \order$. It turns out that $\rho_X(\order)$ is contained in  $\End(\mathcal O^n|_{\mathcal  O'})$, since $\varphi(\mathcal O')$ is contained in the commutative ring $\order$ and since $\tau$ is $\mathcal O'$-compatible.  Hence $\rho_X$ maps $\order$ into $\End(\mathcal O^n|_{\mathcal  O'}) \subseteq \End_{\Z}(\mathcal O^n)$. 
Furthermore, on using that $z\mathcal O\subseteq \mathcal O'$ and that $\mathcal O$ is an integral domain, we deduce that $\End(\mathcal O^n|_{\mathcal O'})$ coincides with the endomorphism ring $R$ of the $\mathcal O$-module $\mathcal O^n$. 
Thus  $\rho_X$ is in fact a ring morphism $\order \to R$. Suppose now that  $X'$ lies in the class $[X]$. Then there exists an isomorphism $\nu \colon X' \isomto X$ of $\order$-modules  and an isomorphism $\tau' \colon X'|_{\mathcal O'} \isomto \mathcal O^n|_{\mathcal O'}$ of $\mathcal O'$-modules. 
Let $\rho'\colon \order\to \End_\Z(X')$ be the structure morphism of the $\order$-module $X'$ and let  $\rho_{X'}\colon \order\to R$ be defined by $\gamma \mapsto \tau' \rho'(\gamma) \tau'^{-1}$ for $\gamma\in \order$.  Consider the automorphism $r= \tau \nu\tau'^{-1}$  of the abelian group $\mathcal O^n$. We see that $r$ is compatible with the diagonal action of the subring $\mathcal O'$ of $\mathcal O$, since $\nu$ is $\order$-compatible and since $\tau,\tau'$ are $\mathcal O'$-compatible. 
Hence $r$ lies in $\End(\mathcal O^n|_{\mathcal O'})^\times =R^\times$. 
Furthermore, on using again that $\nu$ is $\order$-compatible, we compute that  $r\rho_{X'}r^{-1}=\rho_X$. 
This shows that $\rho_X$ and $\rho_{X'}$ coincide in $\Hom_\varphi(\order,R)/R^\times$ and hence we obtain a map 
$$
\iS_{\varphi}\to \Hom_\varphi(\order,R)/R^\times, \ \ [X]\mapsto [\rho_X].$$
To prove that this map is a bijection, we take 
$[X] \in \iS_\varphi$ and we choose an isomorphism
$\tau \colon X|_{\mathcal O'} \isomto \mathcal O^n|_{\mathcal O'}$. Recall that $X_{\rho_X}$ is the abelian group $\mathcal O^n$ equipped with the $\order$-module structure given by $\rho_X\colon \order\to R\subseteq \End_\Z(\mathcal O^n)$. It turns out that the isomorphism $\tau$ is in fact an isomorphism of $\order$-modules $X\isomto X_{\rho_X}$ 
and thus $[X]=[X_{\rho_X}]$. This proves that the composition $[X]\mapsto [\rho_X]\mapsto [X_{\rho_X}]$ is the identity. 
It remains to show that $[X]\mapsto [\rho_X]$ is the inverse of $\rho\mapsto [X_\rho]$. Let $\rho$ be an element of $\Hom_{\varphi}(\order,R)$ and  consider the $\order$-module $X_\rho$ whose underlying abelian group is $\mathcal O^n$ and whose $\order$-module structure is given by $\rho \colon\order\to R\subseteq \End_\Z(\mathcal O^n)$.  We showed that the identity morphism of $\mathcal O^n$ is an isomorphism of  $\mathcal O'$-modules $X_\rho|_{\mathcal O'}\cong \mathcal O^n|_{\mathcal O'}$, and  that   $[\rho_{X_\rho}]$ does not depend on the specific choice of an $\mathcal O'$-isomorphism $\tau:X_\rho|_{\mathcal O'}\cong \mathcal O^n|_{\mathcal O'}$. Therefore, on taking the isomorphism $\tau=\textnormal{id}$, we obtain that $[\rho]=[\rho_{X_\rho}]$ and thus the composition $[\rho]\mapsto [X_\rho]\mapsto [\rho_{X_\rho}]$ is the identity. We conclude that the displayed map is a bijection, proving the proposition. 
\end{proof}

The above result shows that in order to control the number of $\varphi$-compatible $\order$-structures on $R$ it suffices to control $|\iS_\varphi|$. We next embed $\iS_\varphi$ into a certain  monoid $C_\order$  whose cardinality can be bounded via the  theory of orders of number fields.

\subsubsection{Fractional ideals of $\order$}\label{sec:fracideals} 
We continue our notation and terminology. Let $\order$ be an  order of an arbitrary number field $L$. In this section, we study (not necessarily invertible) fractional ideals of $\Gamma$.

We start with some definitions. Let $I$ be a nonzero $\order$-submodule of $L$. We say that $I$ is a fractional ideal of $\order$ if there exists a nonzero $x \in L$ with $x I \subseteq \order$. Then $I$ is a fractional ideal of $\order$ if and only if $I$ is a finitely generated $\Gamma$-submodule of $L$. 
We say that fractional ideals $I,I' $ of $\order$ are equivalent if there exists a nonzero element $x \in L$ such that $I' = xI$. 
We denote by $C_\order$  the set of fractional ideals of $\Gamma$ modulo equivalence. The product of ideals gives $C_\order$ the structure of a commutative monoid with identity element given by the equivalence class of $\order$. 
This equivalence class consists of the fractional ideals $x\order$ with $x\in L$ nonzero. The Picard group $\Pic(\order)$ of $\order$ is the set of invertible elements in $C_\order$.

Let $K$ be a number field which is contained in $L$. For any ring morphism $\varphi:K\to L$, we constructed in \eqref{def:isoclassesofgamma} a set $\iS_\varphi$ consisting of isomorphism classes of certain $\order$-modules. The following result shows that  $\iS_\varphi$ is contained in the monoid $C_\order$.

\begin{lemma}\label{lem:isoembedding}
For any ring morphism $\varphi:K\to L$, there exists an injection $\iS_\varphi \hookrightarrow C_\order$.
\end{lemma}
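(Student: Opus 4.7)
The plan is to send each class $[X]\in\iS_\varphi$ to the equivalence class of the fractional ideal obtained by generically trivialising the $\order$-module $X$. The key observation is that $\iS_\varphi$ selects exactly those $\order$-modules whose rational hull is free of $L$-rank one, after which every fractional-ideal manipulation becomes standard.

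First I would compute ranks. Since $X|_{\mathcal O'}\cong\mathcal O^n|_{\mathcal O'}$ as $\mathcal O'$-modules, the underlying abelian group of $X$ is finitely generated, torsion-free, and of $\ZZ$-rank $n[K:\QQ]=[L:\QQ]$. The $\order$-action on $X$ extends $\QQ$-linearly to an $L=\order\otimes_\ZZ\QQ$-action on $V=X\otimes_\ZZ\QQ$, and since $\dim_\QQ V=[L:\QQ]$ I deduce $\dim_L V=1$; hence $V\isomto L$ as $L$-modules.

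Next I would define the map. For each representative $X$ of a class in $\iS_\varphi$, choose an $L$-linear isomorphism $\nu\colon V\isomto L$; restricting $\nu$ along the natural inclusion $X\hookrightarrow V$ identifies $X$ with a finitely generated $\order$-submodule $I_X\subseteq L$, that is, with a fractional ideal of $\order$. Any other choice $\nu'$ of $L$-linear isomorphism $V\isomto L$ differs from $\nu$ by multiplication by some $x\in L^\times$, so produces $xI_X$, whose class in $C_\order$ coincides with $[I_X]$. Likewise, if $f\colon X\isomto X'$ is a $\order$-isomorphism, then $\nu\circ (f\otimes\QQ)^{-1}\colon V'\isomto L$ realises $X'$ as the very same fractional ideal $I_X$, so $[I_X]\in C_\order$ depends only on $[X]\in\iS_\varphi$. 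This yields a map $\iS_\varphi\to C_\order$, $[X]\mapsto[I_X]$.

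For injectivity, suppose $[I_X]=[I_{X'}]$. Replacing $\nu'$ by $x\nu'$ for a suitable $x\in L^\times$, I may arrange $\nu(X)=\nu'(X')$ as subsets of $L$; then $(\nu')^{-1}\circ\nu$ restricts to a $\order$-linear bijection $X\isomto X'$, proving $[X]=[X']$ in $\iS_\varphi$. I do not anticipate a substantive obstacle: once the $L$-dimension count $\dim_L V=1$ is in hand, the rest is the routine identification of torsion-free rank-one $\order$-modules with fractional ideals of $\order$, which is exactly the structural content required to embed $\iS_\varphi$ into $C_\order$.
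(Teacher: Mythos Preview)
Your proof is correct and follows essentially the same approach as the paper: both arguments use the $\mathcal O'$-isomorphism $X|_{\mathcal O'}\cong\mathcal O^n|_{\mathcal O'}$ to show that $X\otimes_\ZZ\QQ$ is a one-dimensional $L$-vector space (you compute the $\QQ$-dimension directly as $n[K:\QQ]=[L:\QQ]$, while the paper computes the $K$-dimension as $n$ via $\mathcal O'\otimes_\ZZ\QQ=K$), then choose an $L$-linear trivialisation to produce a fractional ideal of $\order$, and verify well-definedness and injectivity in the same way.
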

\begin{proof}
We continue the notation and terminology introduced in the proof of Proposition~\ref{prop:bijection}. To construct a map from $\iS_\varphi$ to $C_\order$, we take $[X] \in \iS_\varphi$. It holds that $\order\otimes_\ZZ\QQ\cong L$. Thus the $\order$-module structure of $X$ gives  $X_\Q = X\otimes_\ZZ \Q$ an $L$-vector space structure and then a $K$-vector space structure $X_\QQ|_K$ via the inclusion $K\subseteq L$. On using that $[X]$ lies in $\iS_\varphi$, we obtain an isomorphism of $\mathcal O'$-modules $X|_{\mathcal O'}\cong \mathcal O^n|_{\mathcal O'}$ which tensored with $\QQ$ becomes an isomorphism of $K$-vector spaces $X_\QQ|_K\cong K^n$. Thus the dimension of the $K$-vector space $X_\QQ|_K$ is $n$. This proves that $X_\QQ$ is a one dimensional $L$-vector space, since the subfield $K\subseteq L$ has relative degree $n=[L:K]$. Hence there exists an isomorphism $\mu \colon X_\Q \isomto L$ of $L$-vector spaces. The image $\mu(X)$ of $X \subset X_\Q$ in $L$ is a nonzero $\order$-submodule of $L$.  On using that $X|_{\mathcal O'}\cong \mathcal O^n|_{\mathcal O'}$, we see that $X$ and thus $\mu(X)$ are finitely generated $\Z$-modules. Therefore $\mu(X)$ is a fractional ideal of $\order$. To see that  $$[X]\mapsto [\mu(X)]$$ defines a map $\iS_\varphi\to C_\order$, we suppose that $X'$ lies in $[X]$.  Then there exists an isomorphism $\nu:X\to X'$ of $\order$-modules and an isomorphism $\mu':X'_\QQ\to L$ of $L$-vector spaces. The map $\mu'(\nu\otimes_\Z\Q)\mu^{-1}:L\to L$ is an automorphism of the one dimensional $L$-vector space $L$, and thus this map is multiplication with a nonzero element $x\in L$. It follows that $\mu'(X')=x\mu(X)$ and hence the fractional ideals $\mu(X)$ and $\mu'(X')$ of $\order$ are equivalent. This shows that $[X]\mapsto [\mu(X)]$ indeed defines a map $\iS_\varphi\to C_\order$.  To see that this map is injective, we assume that the classes $[X']$ and $[X]$ in $\iS_\varphi$  have the same image in $C_\order$. Then there exists  a nonzero $x\in L$ with $\mu(X)=x\mu'(X')$. It follows that $\mu^{-1}(\cdot x)\mu':X'\to X$ is an isomorphism of $\Gamma$-modules and thus  $[X']=[X]$. We conclude that $[X]\mapsto [\mu(X)]$ defines an injection  $\iS_\varphi\hookrightarrow C_\order$. 
This completes the proof of the lemma. 
\end{proof}

To control the cardinality of $C_\order$ we follow a strategy of Hendrik Lenstra; we would like to thank him for explaining us his idea. In a first step we express $C_\order$ as a product of $\Pic(\order)$ with the set $\mathcal I\subseteq C_\order$ of classes which contain a fractional ideal $I$ of $\order$ with $\idf \subseteq I \subseteq \mathcal O_L$, where $\idf$ is the conductor ideal of $\order$ and  $\mathcal O_L$ is the ring of integers of $L$.

\begin{lemma}\label{lem:classnumberbound}
It holds $C_\order=\Pic(\order)\cdot\mathcal I$.
\end{lemma}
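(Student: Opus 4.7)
The plan is to show, for any fractional $\order$-ideal $I$, the existence of an invertible $\order$-ideal $J$ and a fractional ideal $M$ with $\idf\subseteq M\subseteq\mathcal O_L$ such that $[I] = [J]\cdot[M]$ in $C_\order$. The strategy is first to find an invertible $J$ whose extension $J\mathcal O_L$ coincides with $I\mathcal O_L$ as a fractional $\mathcal O_L$-ideal, and then to take $M := IJ^{-1}$; the required sandwich condition on $M$ will follow automatically from the identity $M\mathcal O_L = \mathcal O_L$.

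The key auxiliary step is the surjectivity of the extension map $\Pic(\order)\to\Pic(\mathcal O_L)$, $[J]\mapsto[J\mathcal O_L]$. To prove it I would take a fractional $\mathcal O_L$-ideal $\mathfrak A$ and, by weak approximation in the Dedekind domain $\mathcal O_L$, replace it within its class by an integral $\mathcal O_L$-ideal $\mathfrak A'$ coprime to $\idf$. Writing $\mathfrak A' = \prod\mathfrak P_i^{n_i}$ and using the classical fact that $\order$ and $\mathcal O_L$ have the same localization at primes coprime to the conductor, each contraction $\mathfrak p_i := \mathfrak P_i\cap\order$ satisfies $\mathfrak p_i\mathcal O_L = \mathfrak P_i$ and $\order_{\mathfrak p_i}$ is a DVR, so $\mathfrak p_i$ is an invertible prime of $\order$. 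The product $\prod\mathfrak p_i^{n_i}$ is then an invertible $\order$-ideal whose extension lies in the class of $\mathfrak A$, establishing surjectivity.

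Applying this to $\mathfrak A = I\mathcal O_L$ and rescaling by a suitable element of $L^\times$ produces an invertible $\order$-ideal $J$ with $J\mathcal O_L = I\mathcal O_L$. Setting $M := IJ^{-1}$, I would then compute $M\mathcal O_L = (I\mathcal O_L)(J^{-1}\mathcal O_L) = (J\mathcal O_L)(J\mathcal O_L)^{-1} = \mathcal O_L$, where the middle equality uses $J^{-1}\mathcal O_L = (J\mathcal O_L)^{-1}$, itself a consequence of extending $JJ^{-1}=\order$ to $\mathcal O_L$. The inclusion $M\subseteq M\mathcal O_L = \mathcal O_L$ is then immediate. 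For the reverse inclusion $\idf\subseteq M$, the relation $1 = \sum a_j m_j$ (with $a_j\in\mathcal O_L$, $m_j\in M$) coming from $1\in M\mathcal O_L$ yields, for any $c\in\idf$, the expression $c = \sum(ca_j)m_j$ with $ca_j\in\idf\mathcal O_L = \idf\subseteq\order$ (by the defining property of the conductor), hence $c\in\order\cdot M = M$. The main obstacle is the surjectivity claim in the middle paragraph, which is where the arithmetic of the conductor enters; the rest is formal ideal-theoretic manipulation.
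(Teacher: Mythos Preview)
Your proof is correct and follows essentially the same approach as the paper's: both reduce to the surjectivity of the extension map $\Pic(\order)\to\Pic(\mathcal O_L)$, pick an invertible $\order$-ideal whose extension matches $I\mathcal O_L$ (your $J$ is the paper's $A^{-1}$), and then verify the sandwich $\idf\subseteq IJ^{-1}\subseteq\mathcal O_L$ from the identity $(IJ^{-1})\mathcal O_L=\mathcal O_L$ together with the fact that $\idf$ is an $\mathcal O_L$-ideal. The only difference is cosmetic: the paper cites the surjectivity from Neukirch, whereas you sketch it via weak approximation and localisation away from the conductor.
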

\begin{proof}
To simplify notation we write $\mathcal O=\mathcal O_L$ in this proof. On recalling that the monoid $C_\order$ is closed under ideal multiplication, we see that $C_\order$ contains $\Pic(\order)\cdot\mathcal I$. Hence the lemma directly follows from the following claim: Any fractional ideal $I$ of $\order$ takes the form $I=AB$ with fractional ideals $A,B$ of $\order$ such that $[A]$ lies in $\Pic(\order)$ and such that $\idf\subseteq B\subseteq \mathcal O$. 
To prove this claim we take a fractional ideal $I$ of $\order$. The ring $\OL$ is the normalization of $\order$ in $L$,  
 and ideal multiplication with $\OL$ defines a group morphism $\pi:\Pic(\order) \to \Pic(\OL)$.  On using that $\order\subseteq \OL$, we see that  $J = I\OL$  is a fractional ideal of $\OL$ and thus $[J]$ lies in $C_\OL=\Pic(\OL)$. Here $C_\OL$ coincides with $\Pic(\OL)$ since $\OL$ is a Dedekind domain. 
  Further, it follows for example from \cite[p.78]{neukirch:ant} that $\pi$ is surjective. 
Hence there exists  $[A]$ in $\Pic(\order)$  with $A\OL=J^{-1}$. We obtain that $A \OL I = J^{-1} J = \OL,$ which in turn implies $\idf = A\idf I$ since $\idf$ is an ideal of $\mathcal O$.
Thus the inclusion $\idf I\subseteq I$ shows that $\idf$ is contained in $B=AI$. Here   $\idf I$ is contained in $I$, since $I$ is a $\order$-submodule of $L$ and since $\idf$ is contained in $\order$.  Further the equality $A\OL I=\OL$ shows that $B=AI$ is contained in $\OL$. We conclude that $I=A^{-1}B$ where $[A^{-1}]\in \Pic(\order)$ and $\idf\subseteq B\subseteq \OL$. This proves our claim.\end{proof}

Let $\idf'$ be an ideal of $\mathcal O_L$ which is  contained in $\order$, and denote by $\mathcal I'$ the set obtained by replacing $\idf$ with $\idf'$ in the definition of $\mathcal I$. We observe that $\idf'\subseteq\idf$, which gives $\mathcal I\subseteq\mathcal I'$ and hence Lemma~\ref{lem:classnumberbound} implies that $C_\order=\Pic(\order)\cdot \mathcal I'$. However,  to optimize the upper bound for $ |\Pic(\order)|\cdot |\mathcal I'|\geq |C_\order|$, we would like to choose the set $\mathcal I'$ as small as possible and thus we work with  $\mathcal I$. One can explicitly bound  $|\mathcal I|$ and $|\Pic(\order)|$ in terms of the norm $N(\idf)$ of $\idf$, the degree $g=[L:\QQ]$ and the class number $h$ of $\OL_L$. For example it holds
\begin{equation}\label{eq:orderinvarest}
|\mathcal I|\leq N(\idf)^g \ \ \ \textnormal{ and }  \ \ \ |\Pic(\Gamma)|\leq N(\idf)h.
\end{equation}
To see the first inequality, we use that  any group $I$ with $\idf\subseteq I\subseteq \mathcal O_L$ is a free $\ZZ$-module of rank $g$. 
Hence $|\mathcal I|$ is at most the number of  subgroups of $\OL_L/\idf$ which are generated by $g$ elements. This implies the first inequality $|\mathcal I|\leq N(\idf)^g$ in \eqref{eq:orderinvarest}. The second inequality  in \eqref{eq:orderinvarest}  follows from the formula in \cite[p.81]{neukirch:ant} which relates $h$ to the cardinality of $\Pic(\order)$. 
Finally Lemma~\ref{lem:classnumberbound} together with \eqref{eq:orderinvarest}   gives the relatively simple bound  $|C_\order|\leq N(\idf)^{g+1}h$ which is sufficiently strong for the applications in this paper.

\subsubsection{Proof of Proposition~\ref{prop:endoboundmo}}
We continue our notation and terminology. The goal of this section is to prove Proposition~\ref{prop:endoboundmo}. Let $K$, $L$, $\order$ and $R$ be as in the statement of the proposition. 

\begin{proof}[Proof of Proposition~\ref{prop:endoboundmo}]
To bound the number of $\order$-structures on $R$, we use the decomposition of Lemma~\ref{lem:decomp} into $\varphi$-compatible morphisms: The set $\Hom(\order,R)/R^\times$ of $\order$-structures on $R$ takes the form $\cup_{\varphi} \bigl(\Hom_\varphi(\order,R)/R^\times\bigl)$ with the disjoint union taken over all ring morphisms $\varphi:K\to L$.   Proposition~\ref{prop:bijection} gives a bijection $\Hom_\varphi(\order,R)/R^\times\cong\iS_\varphi$ for $\iS_\varphi$ as in \eqref{def:isoclassesofgamma}, and Lemma~\ref{lem:isoembedding} injects $\iS_\varphi$ into the set $C_\order$ of fractional ideals of $\order$ modulo equivalence. Hence, on putting these results together, we obtain an injective map
\begin{equation}\label{eq:endoinjection}
\Hom(\order,R)/R^\times\hookrightarrow \cup_\varphi C_\order
\end{equation}
with the disjoint union taken over all ring morphisms $\varphi:K\to L$.
Lemma~\ref{lem:classnumberbound} gives that
$C_\order=\Pic(\order)\cdot \mathcal I$ for $\mathcal I$ the subset of $C_\order$ defined above Lemma~\ref{lem:classnumberbound}. Then, on combining \eqref{eq:endoinjection} with the bound for  $|\mathcal I|$ given in \eqref{eq:orderinvarest}, we deduce Proposition~\ref{prop:endoboundmo}.
\end{proof}





\subsection{Comparing $\order$-structures on commensurable rings}\label{sec:endoreduction}

In this section we work in a more general setting.  Let  $R$ and $R'$ be not necessarily commutative subrings of an arbitrary $\QQ$-algebra, and let $\order$ be an order of an arbitrary number field. We denote by $d\geq 1$  a rational integer and we consider the subring $\order'=\ZZ[d\order]$ of $\order$ generated by $d\order$. 
The goal of this section is to establish the following:







\begin{proposition}\label{prop:ordercompa}
Suppose that $d R' \subseteq R$ and $d R \subseteq R'$. Then the number of $\Gamma$-structures on $R$ is at most $|R'/dR'|$ times the number of $\order'$-structures on $R'$.
\end{proposition}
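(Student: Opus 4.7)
The plan is to introduce the multiplicative group $H = (1 + dR') \cap R'^\times$ as a common subgroup of $R^\times$ and $R'^\times$, and then to chain comparisons of quotient sizes through $H$. To see that $H$ is indeed a group, I note that $(1+d\alpha)(1+d\beta) = 1 + d(\alpha+\beta+d\alpha\beta) \in 1+dR'$, and that if $1+d\alpha \in R'^\times$ then solving $(1+d\alpha)(1+d\beta)=1$ yields $\beta = -\alpha(1+d\alpha)^{-1} \in R'$. The hypothesis $dR' \subseteq R$ then gives $1+d\alpha \in R$ and $1+d\beta \in R$, so $H \subseteq R^\times \cap R'^\times$.

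Next I set up the restriction map. For any $\rho \in \Hom(\order, R)$ and any $\gamma \in \order$, we have $d\gamma \in \order' = \ZZ[d\order]$ and $\rho(d\gamma) = d\rho(\gamma) \in dR \subseteq R'$; since $\order'$ is generated as a ring by $d\order$ and $R'$ is a subring of the ambient algebra, $\rho|_{\order'}$ takes values in $R'$. Hence $\rho \mapsto \rho|_{\order'}$ defines a map $r \colon \Hom(\order, R) \to \Hom(\order', R')$. If $\rho_1|_{\order'} = \rho_2|_{\order'}$, then $d\rho_1(\gamma) = d\rho_2(\gamma)$ in $R$ for every $\gamma \in \order$, and since $R$ is $\ZZ$-torsion-free (being a subring of a $\QQ$-algebra) this gives $\rho_1 = \rho_2$, so $r$ is injective. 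Moreover, the conjugation actions of $H$ on $\Hom(\order, R)$ (through $H \subseteq R^\times$) and on $\Hom(\order', R')$ (through $H \subseteq R'^\times$) are both realised by conjugation in the ambient $\QQ$-algebra, so $r$ is $H$-equivariant and descends to an injection $\Hom(\order, R)/H \hookrightarrow \Hom(\order', R')/H$.

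To assemble the bound, I would combine four elementary inequalities. Since $H \subseteq R^\times$, every $R^\times$-orbit in $\Hom(\order, R)$ is a union of $H$-orbits, so $|\Hom(\order, R)/R^\times| \leq |\Hom(\order, R)/H|$; by the injection from the previous step this is at most $|\Hom(\order', R')/H|$. Each $R'^\times$-orbit in $\Hom(\order', R')$ decomposes into at most $|R'^\times/H|$ distinct $H$-orbits (the $H$-orbits in $R'^\times \cdot y$ are parametrised by $H\backslash R'^\times/\Stab_{R'^\times}(y)$), giving
\[
|\Hom(\order', R')/H| \leq |R'^\times/H| \cdot |\Hom(\order', R')/R'^\times|.
\]
Finally, reduction modulo the two-sided ideal $dR' \subseteq R'$ defines a group homomorphism $R'^\times \to (R'/dR')^\times$ whose kernel is exactly $(1 + dR') \cap R'^\times = H$, and therefore $|R'^\times/H| \leq |R'/dR'|$.

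The main obstacle is that the natural conjugation actions of $R^\times$ and $R'^\times$ live on different sets and neither unit group is contained in the other, so the naive map $[\rho] \mapsto [\rho|_{\order'}]$ is not well-defined from $\Hom(\order, R)/R^\times$ to $\Hom(\order', R')/R'^\times$. The group $H$, sitting simultaneously inside $R^\times$ and $R'^\times$, bridges the two quotients; the factor $|R'/dR'|$ then appears as the measure of how much larger $R'^\times$ is than the common subgroup $H$.
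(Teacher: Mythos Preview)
Your proof is correct. Both your argument and the paper's share the same skeleton---restrict from $\order$ to $\order'$, pass through a common subgroup of units, and bound the resulting index by $|R'/dR'|$---but they choose different bridges. The paper introduces the intermediate \emph{ring} $R_0 = R \cap R'$ and its unit group $R_0^\times$: first Lemma~\ref{lem:propimproved} (the case $R_0 \subseteq R$, $dR \subseteq R_0$) gives $|\Hom(\order,R)/R^\times| \le |\Hom(\order',R_0)/R_0^\times|$, then the inclusion $R_0 \subseteq R'$ induces a map to $\Hom(\order',R')/R'^\times$ of degree at most $|R'^\times/R_0^\times|$, and finally Lemma~\ref{lem:orderunitscompa} bounds $|R'^\times/R_0^\times|$ by $|R'/dR'|$ via an injective correspondence that is not quite a map. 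Your choice $H = (1+dR')\cap R'^\times$ is instead the kernel of the reduction homomorphism $R'^\times \to (R'/dR')^\times$, hence automatically normal in $R'^\times$; this makes the index bound $|R'^\times/H| \le |(R'/dR')^\times| \le |R'/dR'|$ immediate and sidesteps the correspondence argument entirely. Since $H \subseteq R^\times \cap R'^\times \subseteq R_0^\times$, your bridge sits inside the paper's, so your chain is formally a touch coarser at the intermediate step but lands on the same final estimate. Your version is more streamlined; the paper's is more modular, with Lemmas~\ref{lem:propimproved} and~\ref{lem:orderunitscompa} stated separately for reuse.
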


We worked out the above result for fairly general rings $R$ and $R'$, since this will be useful for future work and since the arguments do not simplify in the special situations which we encounter in our proof of Theorem~\ref{thm:endobound}. Further, in our applications the ring  $R'$ is a free $\ZZ$-module of finite rank $m$ and for such rings $R'$ it holds that 
\begin{equation}\label{eq:cardofquotient}
|R'/dR'|=d^m.
\end{equation}
Our strategy of proof of Proposition~\ref{prop:ordercompa} is as follows. In Lemma~\ref{lem:propimproved} we first prove a stronger statement in the special case when $R'\subseteq R$. Then we reduce the general case to this special case by working with the ring $R_0=R'\cap R$ and by ``embedding'' via Lemma~\ref{lem:orderunitscompa} the set $R'^\times/R_0^\times$ into $R'/dR'$. Here $R^\times$ denotes the unit group of $R$.  

We start with the following completely elementary observation which describes a useful property of the set $\Hom(\order,R)$ of ring morphisms $\order\to R$.

\begin{lemma}\label{lem:restrictembed}
Suppose that $dR\subseteq R'$. Then the restriction  to the subring $\order'$ of $\order$ defines an injective map from $\Hom(\order,R)$ into $\Hom(\order',R')$. 
\end{lemma}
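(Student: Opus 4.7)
The plan is to verify two things: first, that the restriction map is well-defined as claimed, meaning it actually lands in $\Hom(\order',R')$ rather than merely in $\Hom(\order',R)$; and second, that it is injective.

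For well-definedness I would use that by construction $\order' = \ZZ[d\order]$ is generated as a subring of $\order$ by the subset $d\order$. Hence for any ring morphism $\rho \colon \order \to R$, the image $\rho(\order')$ equals the subring of $R$ generated by $\rho(d\order) = d\rho(\order)$, which sits inside $dR$. Since $R'$ is a subring of the ambient $\QQ$-algebra (so contains $\ZZ \cdot 1$ and is closed under the ring operations) and contains $dR$ by the hypothesis $dR \subseteq R'$, it must contain the subring of the ambient $\QQ$-algebra generated by $dR$. Therefore $\rho(\order') \subseteq R'$, and restriction yields a well-defined map $\Hom(\order,R) \to \Hom(\order',R')$.

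For injectivity I would argue as follows. Suppose $\rho_1,\rho_2 \in \Hom(\order,R)$ have equal restrictions to $\order'$. For any $\gamma \in \order$ the element $d\gamma$ lies in $d\order \subseteq \order'$, so
$$d\rho_1(\gamma) = \rho_1(d\gamma) = \rho_2(d\gamma) = d\rho_2(\gamma)$$
inside $R$. Since $R$ is a subring of a $\QQ$-algebra it is torsion-free as an abelian group, so multiplication by the nonzero integer $d$ is injective on $R$. Cancelling $d$ gives $\rho_1(\gamma) = \rho_2(\gamma)$ for every $\gamma \in \order$, so $\rho_1 = \rho_2$.

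The argument is short and entirely formal; there is no real obstacle. The only point that requires a moment's attention is the well-definedness step, where one must use both that $R'$ is closed under the ring operations (not merely under addition) and that it contains $\ZZ$; neither is an issue, since $R'$ is by hypothesis a subring of a $\QQ$-algebra. Note also that the stronger hypothesis $dR' \subseteq R$ appearing in Proposition~\ref{prop:ordercompa} plays no role here; only $dR \subseteq R'$ is needed.
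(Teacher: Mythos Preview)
Your proof is correct and follows essentially the same approach as the paper: both establish well-definedness via $\rho(d\order)=d\rho(\order)\subseteq dR\subseteq R'$ together with the fact that $\order'$ is generated as a ring by $d\order$, and both prove injectivity by evaluating at $d\gamma\in\order'$ and cancelling $d$ using that $R$ sits in a $\QQ$-algebra. Your write-up is slightly more explicit about why the subring generated by $dR$ lies in $R'$, but the argument is the same.
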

\begin{proof}
We take $\rho\in \Hom(\order, R)$. It holds that $d \rho(\order) \subseteq d R \subseteq R'$ which proves  $\rho(d\order) \subseteq R'$. It follows that $\rho(\order')$ is contained in $R'$ and thus the restriction $\rho\mapsto\rho|_{\order'}$ defines a map  $\Hom(\order,R)\to\Hom(\order',R')$. To prove that this map is injective, we assume that there exists $\rho'$ in $\Hom(\order,R)$ with $\tau=\rho|_{\order'}=\rho'|_{\order'}$. If $\gamma\in \order$ then  $d\gamma$ lies in $\order'$ and we deduce that $d\rho(\gamma)=\tau(d\gamma)=d\rho'(\gamma)$. It follows that $\rho=\rho'$, since $R$ is contained in a $\QQ$-algebra. We conclude that $\rho\mapsto\rho|_{\order'}$ injects $\Hom(\order,R)$ into $\Hom(\order',R')$ as desired.\end{proof}


We are now ready to prove a stronger version of Proposition~\ref{prop:ordercompa} assuming that $R'\subseteq R$. 

\begin{lemma}\label{lem:propimproved}
Suppose that $d R \subseteq R'$ and assume that  $R' \subseteq R$. Then the number of $\Gamma$-structures on $R$ is at most the number of $\order'$-structures on $R'$.
\end{lemma}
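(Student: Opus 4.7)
The plan is to combine the injection from Lemma~\ref{lem:restrictembed} with a simple comparison of conjugation actions, exploiting the inclusion $R'\subseteq R$. Since $dR\subseteq R'$, Lemma~\ref{lem:restrictembed} produces an injective restriction map
$$r\colon \Hom(\order,R)\hookrightarrow \Hom(\order',R'),\qquad \rho\mapsto \rho|_{\order'}.$$
The assumption $R'\subseteq R$ gives $R'^\times\subseteq R^\times$, so $R'^\times$ acts on both sides by conjugation (on the right by its own conjugation action, on the left via its inclusion into $R^\times$), and the map $r$ is tautologically $R'^\times$-equivariant because restriction commutes with conjugation by a fixed element.

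Next I would show that $r$ descends to an injection $\bar r\colon \Hom(\order,R)/R'^\times\hookrightarrow \Hom(\order',R')/R'^\times$. Indeed, suppose $r(\rho_1)$ and $r(\rho_2)$ lie in the same $R'^\times$-orbit, so $r(\rho_1)=u'\, r(\rho_2)\,u'^{-1}$ for some $u'\in R'^\times$. Viewing $u'$ as an element of $R^\times$, the conjugate $u'\rho_2 u'^{-1}$ is a well-defined element of $\Hom(\order,R)$, and by equivariance $r(u'\rho_2 u'^{-1})=u'\,r(\rho_2)\,u'^{-1}=r(\rho_1)$. Injectivity of $r$ then forces $\rho_1=u'\rho_2 u'^{-1}$, so $\rho_1,\rho_2$ already lie in the same $R'^\times$-orbit in $\Hom(\order,R)$, proving that $\bar r$ is injective.

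Finally, since $R'^\times\subseteq R^\times$, every $R^\times$-orbit on $\Hom(\order,R)$ is a union of $R'^\times$-orbits, hence
$$|\Hom(\order,R)/R^\times|\leq |\Hom(\order,R)/R'^\times|\leq |\Hom(\order',R')/R'^\times|,$$
which is precisely the claimed inequality between the number of $\order$-structures on $R$ and the number of $\order'$-structures on $R'$. There is no real obstacle here: the argument is purely formal manipulation of orbits under the conjugation action, and the work has already been done in Lemma~\ref{lem:restrictembed}; the only point to watch is that the inclusion $R'\subseteq R$ is used twice, once to make sense of the $R'^\times$-action on $\Hom(\order,R)$ and once to compare $R^\times$- with $R'^\times$-orbits.
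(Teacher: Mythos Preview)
Your proof is correct and follows essentially the same approach as the paper's: both use Lemma~\ref{lem:restrictembed} to obtain the injective restriction map, observe its $R'^\times$-equivariance to descend to an injection on $R'^\times$-orbits, and then use $R'^\times\subseteq R^\times$ to pass from $R'^\times$-orbits to $R^\times$-orbits on the source. The only difference is that you spell out the descent step a bit more explicitly than the paper does.
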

\begin{proof}
Our assumption $R'\subseteq R$ implies that the subgroup $R'^\times$ of $R^\times$ acts on the sets $\Hom(\order, R)$ and $\Hom(\order',R')$ by conjugation. The ring $R'$ contains $d R$ by assumption. Therefore Lemma~\ref{lem:restrictembed} gives that the restriction to  $\order'$ defines an embedding  $\Hom(\order,R)\hookrightarrow \Hom(\order',R')$. This embedding is compatible with the conjugation action of $R'^\times$ on these sets 
and hence we see that the restriction to $\order'$ induces an injective map
\begin{equation*}
\Hom(\order, R)/R'^\times\hookrightarrow \Hom(\order', R')/R'^\times.
\end{equation*} 
On using again that $R'^\times$ is a subgroup of $R^\times$, we obtain a canonical surjection from $\Hom(\order, R)/R'^\times$ onto $\Hom(\order, R)/R^\times$. Thus the number of $\order$-structures on $R$ is at most the cardinality of $\Hom(\order, R)/R'^\times$, which in turn is at most the number of $\order'$-structures on $R'$ in view of the displayed embedding. This implies Lemma~\ref{lem:propimproved}.\end{proof}

In Proposition~\ref{prop:ordercompa} we make the assumption $dR'\subseteq R$. To reduce in a controlled way to the special situation $R'\subseteq R$ of the above lemma, we shall use the following result.

\begin{lemma}\label{lem:orderunitscompa}
Suppose that $dR\subseteq R'$. If $R'\subseteq R$ then $|R^\times/R'^\times|$ is at most $|R/dR|$.
\end{lemma}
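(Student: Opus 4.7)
The plan is to construct an injection from the coset space $R^\times/R'^\times$ into the additive abelian group $R/dR$, which immediately yields the claimed bound. The natural candidate is the reduction map $\phi\colon R^\times\to R/dR$, $u\mapsto u+dR$, coming from the inclusion $R^\times\subseteq R$. The content of the lemma then reduces to showing that $\phi$ separates left cosets of $R'^\times$ in $R^\times$: whenever $\phi(u)=\phi(v)$ for $u,v\in R^\times$, the elements $u$ and $v$ should lie in the same coset $uR'^\times = vR'^\times$, equivalently $u^{-1}v\in R'^\times$.

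To establish this separation property, I would start from $u-v\in dR$ and rewrite
\[
1-u^{-1}v \;=\; u^{-1}(u-v) \;\in\; u^{-1}(dR).
\]
The key identity is $u^{-1}(dR)=dR$, which holds because $d$ is a rational integer and therefore central, while $u^{-1}R=R$ since $u^{-1}\in R^\times$. The hypothesis $dR\subseteq R'$ then gives $1-u^{-1}v\in R'$, and since $R'$ shares the unit $1$ with $R$ we deduce $u^{-1}v\in R'$. Swapping the roles of $u$ and $v$ yields $v^{-1}u\in R'$ by the same argument. As $u^{-1}v$ and $v^{-1}u$ are mutually inverse inside $R$, they witness each other as units of $R'$, so $u^{-1}v\in R'^\times$, as required.

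With the separation property in hand, choosing one representative $u_i$ from each left coset of $R'^\times$ in $R^\times$ produces elements whose images $\phi(u_i)\in R/dR$ are pairwise distinct, yielding the inequality $|R^\times/R'^\times|\leq |R/dR|$. The only step with any genuine content is the centrality argument $u^{-1}(dR)=dR$, which crucially uses $d\in\ZZ$; the remaining steps are formal combinations of the hypotheses $R'\subseteq R$ and $dR\subseteq R'$, plus the elementary fact that an element of $R^\times$ which together with its $R$-inverse lies in $R'$ must belong to $R'^\times$.
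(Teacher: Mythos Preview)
Your proof is correct and essentially identical to the paper's: both use the reduction map $R^\times\to R/dR$ and show that equal images force the elements into the same $R'^\times$-coset by writing $1-u^{-1}v$ (respectively $1-xy^{-1}$ in the paper) as an element of $dR\subseteq R'$. The only cosmetic differences are that the paper works with $xy^{-1}$ rather than $u^{-1}v$ and phrases the conclusion as an ``injective correspondence'' rather than an injection on coset representatives.
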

\begin{proof}
To prove the statement, we construct an injective correspondence from the set ${R^\times}/{R'^\times}$ to the set $R/d R$. We consider the map $\psi \colon R^\times \to  {R}/{d R}$ obtained by composing the natural projection $R\to R/dR$ with the inclusion $R^\times \subseteq R$. To compute the fibers of $\psi$, we assume that $x, y \in R^\times$ satisfy $\psi(x) = \psi(y)$. Then it holds that $y = x + dr$ for some $r \in R$. We now show that $xy^{-1} \in R'^\times$. For this purpose, we observe that $xy^{-1}=1 - d ry^{-1}$. 
Our assumption $d R \subseteq R'$ assures that $d (ry^{-1})$ lies in $R'$, which proves that $xy^{-1}=1 - d ry^{-1}$ lies in $R'$. Similarly, we see that $yx^{-1}  = 1 + drx^{-1}$ lies in $R'$. It follows that  $z=xy^{-1}$ is a unit of $R'$. In other words, if $x, y \in R^\times$ satisfy $\psi(x) = \psi(y)$ then $x = zy$ for some $z \in R'^\times$. Thus the map $\psi:R^\times \to  {R}/{d R}$ induces an injective correspondence from the set ${R^\times}/{R'^\times}$ to the set $R/d R$, which implies the lemma.  \end{proof}

We remark that the correspondence appearing in the above proof is not necessarily a map, since the construction depends on the choice of a representative of an orbit in $R^\times/R'^\times$. To deduce Lemma~\ref{lem:orderunitscompa}, one can avoid to work with a correspondence.  Indeed the above arguments give that $\psi^{-1}$ defines a surjective map $\psi(R^\times)\to  R^\times/R'^\times$.  We now combine the above results in order to prove Proposition~\ref{prop:ordercompa}.

\begin{proof}[Proof of Proposition~\ref{prop:ordercompa}]
We recall that $R$ and  $R'$ are subrings of a $\QQ$-algebra which we denote by $\Omega$. Our assumptions  $d R \subseteq R'$ and $d R' \subseteq R$ assure that $d R$ and $dR'$ are both contained in the subring $R_0=R\cap R'$ of $\Omega$. 
Furthermore, it holds that $R_0 \subseteq R$. Thus an application of Lemma~\ref{lem:propimproved} with the subrings  $R=R$ and $R'=R_0$ of $\Omega$ gives 
$$
|\Hom(\order, R)/R^\times| \leq |\Hom(\order', R_0)/R_0^\times|.
$$
The identity induces a map from $\Hom(\order',R_0)/R_0^\times$ to $\Hom(\order',R')/R'^\times$, since  $R_0 \subseteq R'$ and $R_0^\times\subseteq R'^\times$. The degree of this map is at most $|R'^\times/R_0^\times|$ 
and then we deduce
$$
|\Hom(\order', R_0)/R_0^\times| \leq |R'^\times/R_0^\times| \cdot |\Hom(\order', R')/R'^\times|.
$$
We already observed  that $dR'\subseteq R_0$, and it holds that $R_0\subseteq R'$. Hence an application of Lemma~\ref{lem:orderunitscompa} with the subrings $R=R'$ and $R'=R_0$ of $\Omega$ gives that $|R'^\times/R_0^\times|\leq |R'/dR'|$. This together with the displayed inequalities implies Proposition~\ref{prop:ordercompa}.
\end{proof}

\subsection{Proof of Theorem~\ref{thm:endobound}}\label{sec:endoproof}

In this section we combine the results established in previous sections in order to prove Theorem~\ref{thm:endobound}. We recall that in Theorem~\ref{thm:endobound} the base scheme $S$ is an open subscheme of  $\spec(\ZZ)$,   and  $\order$ is an order of an arbitrary number field $L$ of degree $g=[L:\QQ]$. Let $A$ be an abelian scheme over $S$ of relative dimension $g$. We now prove an explicit upper bound for the number of $\order$-structures on $A$ by using the strategy outlined in the introduction.

\begin{proof}[Proof of Theorem~\ref{thm:endobound}]
To bound the number of $\order$-structures on $A$, we may and do assume that there exists a ring morphism $\iota:\order\to \End(A)$. The ring $\order\otimes_\ZZ\QQ$ identifies with $L$ and thus tensoring $\iota$ with $\QQ$ gives an embedding $\iota_\QQ:L\hookrightarrow\End^0(A)$. This shows that our abelian scheme $A$ over $S$ of relative dimension $g=[L:\QQ]$ is of $\GL_2$-type. 

1. In a first step we construct an isogenous abelian scheme whose endomorphism ring is of special shape.  Any nonempty open subscheme of $\spec(\ZZ)$ is a connected Dedekind scheme with field of fractions $\QQ$.  Our $A$ over $S$ is of $\GL_2$-type, and $S$ is nonempty. Therefore an application of Lemma~\ref{lem:gl2avstructure}~(iv) with $A$ gives an isogenous abelian scheme $A'$ over $S$ of the form $A'=B^n$, where $B$ is an abelian scheme over $S$ such that $\End(B)$ is a Dedekind ring and such that the generic fiber of $B$ is simple.  In particular, there is an isogeny $\varphi:A\to A'$ with the property that any isogeny $A\to A'$ has degree at least $\deg(\varphi)$. The field $L$ is isomorphic to the subring $L'=\iota_\QQ(L)$ of $\End^0(A)$. Thus an application of Lemma~\ref{lem:gl2avstructure}~(ii) with the number field $L'/\Q$ of degree $g$ provides that $\End^0(B)$ is isomorphic to a subfield $K$ of $L$ such that $L/K$ has  degree $n$ and  $K/\QQ$ has degree $\dim B$. 


2. Next we  compare  $\Gamma$-structures on $A$ with $\order'$-structures on $A'$. Here  $\order'=\ZZ[d\order]$ denotes the subring of $\order$ generated by $d\order$ for $d$  the degree of the isogeny $\varphi:A\to A'$. Let $\varphi^*$ be  the isomorphism of $\QQ$-algebras $\End^0(A)\isomto\End^0(A')$ constructed in \eqref{def:phiupstar}. We  consider the subrings  $R=\varphi^*(\End(A))$ and $R'=\End(A')$ of the $\QQ$-algebra $\End^0(A')$. Lemma~\ref{lem:endorelation}~(i) shows that $dR'\subseteq R$, and inside the $\QQ$-algebra $\End^0(A)$ we obtain the inclusion $d\cdot \End(A)\subseteq \varphi_*(R')$ by Lemma~\ref{lem:endorelation}~(ii).  Here $\varphi_*$ is the isomorphism of $\QQ$-algebras $\End^0(A')\isomto\End^0(A)$ constructed in \eqref{def:philowstar}.  On using that $\varphi_*$ is the inverse of $\varphi^*$, we deduce that $dR\subseteq \varphi^*\varphi_*(R')=R'$. It follows that $R$ and $R'$ are subrings of the $\QQ$-algebra $\End^0(A')$ with $dR\subseteq R'$ and $dR'\subseteq R$. Hence an application of Proposition~\ref{prop:ordercompa} with $R$ and $R'$ gives that the number of $\order$-structures on $R$ is at most $|R'/dR'|$ times the number of $\order'$-structures on $A'$.  To determine the cardinality of $R'/dR'$, we recall that $\End^0(B)\cong K$ and $A'=B^n$. Then we see that the $\QQ$-algebras $\End^{0}(A')$ and $\uM_n(K)$ are isomorphic 
and thus the $\QQ$-dimension of $\End^0(A')$ is $n^2[K:\Q]=ng$. 
Hence the order $R'=\End(A')$  of $\End^0(A')$ is a free $\ZZ$-module of rank $ng$. Therefore \eqref{eq:cardofquotient} gives that $|R'/dR'|=d^{ng}$. Further, the isomorphism $\varphi^*:\End(A)\isomto R$ identifies  $\order$-structures on $A$ with  $\order$-structures on $R$. We conclude that the number of distinct $\order$-structures on $A$ is at most $d^{ng}$ times the number of distinct $\order'$-structures on $A'$.

3. In the next step we control the number of $\order'$-structures on $A'$. We recall that $\End(B)$ is a Dedekind domain and that  $\End^0(B)$ is isomorphic to the number field $K$. It follows that  $\End(B)$ identifies with the ring of integers $\OL$ of $K$ 
and thus  $R'=\End(B^n)$ is isomorphic to $\uM_n(\OL)$. Therefore an application of Proposition~\ref{prop:endoboundmo} with the order $\order'$ of $L$ and the subfield $K\subseteq L$ implies that the number of $\order'$-structures on $A'$ is at most $N(\idf')^{g} h(\order')t.$ Here $N(\idf')$ denotes the norm of the conductor ideal $\idf'$ of $\order'$, and $h(\order')=|\Pic(\order')|$ is the class number of $\order'$. Further  $t$ denotes the number of ring morphisms $K\to L$. To estimate $t$  we consider the tower of fields $\QQ\subseteq K\subseteq L\subseteq L^{cl}$, where  $L^{cl}$ denotes a normal closure of $L/\QQ$. Field theory gives that the set of ring morphisms $K\to L$ injects into the automorphism group $\Aut(L^{cl}/\QQ)$ of the normal extension $L^{cl}/\QQ$. 
Thus $t$ is at most the degree $l=[L^{cl}:\QQ]$. We conclude that the number of $\order'$-structures on $A'$ is at most $N(\idf')^{g} h(\order')l.$

4. In the last step we put everything together and we control the quantities $N(\idf')$, $h(\order')$ and $d$. We begin to estimate $N(\idf')$. Let $\mathcal O_L$ be the ring of integers of $L$, and recall that for any order $\order^*$ of $L$ the  conductor ideal of $\order^*$ consists of the elements $x\in\OL_L$ with $x\OL_L\subseteq \order^*$. This implies that any element $x$ of the conductor ideal $\idf$ of $\order$ satisfies $d x \OL_L \subseteq d \order \subseteq \order'$, which shows that $d \idf\subseteq\idf'$. 
It follows that $N(\idf') \leq N(d\idf)$ and hence we obtain that $N(\idf')\leq d^gN(\idf)$. Further, an application of \eqref{eq:orderinvarest} with the order $\order'$ of $L$ gives that $h(\order')\leq N(\idf')h$ for $h$ the class number of $\mathcal O_L$. Therefore we see that the results proved in steps 2. and 3. imply that the number of $\order$-structures on $A$ is at most
\begin{equation}\label{eq:endoboundwithmindeg}
d^{(2g+1)g}N(\idf)^{g+1}h l. 
\end{equation} 
Here we used in addition that $n=[L:K]$ is at most $[L:\QQ]=g$. It remains to bound the degree $d$. We recall that our abelian scheme $A$ over $S$ is of $\GL_2$-type. Therefore our minimal choice of the isogeny $\varphi:A\to A'$ assures that $d=\deg(\varphi)$ is at most the degree $\deg(\psi)$ of the isogeny $\psi:A\to A'$ which appears in  \eqref{eq:mindegboundshaf}. Thus we see that the upper bound \eqref{eq:mindegboundshaf} for $\deg(\psi)\geq d$  together with \eqref{eq:endoboundwithmindeg} implies Theorem~\ref{thm:endobound}.\end{proof}

\begin{remark}\label{rem:abendo}
Let $\order$ be an order of an arbitrary number field $L$ of degree $g=[L:\QQ]$, and let $A_\QQ$ be an abelian variety over $\QQ$ of dimension $g$. The `minimal' isogeny degree $d(A_\QQ)$ of $A_\QQ$ is the smallest $d\in \ZZ$ with the following property: If $A'_\QQ$ is an abelian variety over $\QQ$ which is isogenous to $A_\QQ$, then there exists an isogeny $A_\QQ\to A'_\QQ$ of degree at most $d$. Then we claim that the number of $\order$-structures on $\End(A_\QQ)$ is at most 
\begin{equation}\label{eq:abendomindeg}
d(A_\QQ)^{(2g+1)g}N(\idf)^{g+1}h l,
\end{equation}
where the quantities $h$, $l$ and $N(\idf)$ are as in Theorem~\ref{thm:endobound}. Indeed this claim follows from \eqref{eq:endoboundwithmindeg} and \eqref{eq:neron}, since $A_\QQ$ extends to an abelian scheme over an open subscheme of $\spec(\ZZ)$. We further mention that the most recent version (due to Gaudron--R\'emond~\cite{gare:isogenies}) of the Masser--W\"ustholz~\cite{mawu:abelianisogenies,mawu:factorization} isogeny estimates provides a fully explicit upper  bound for $d(A_\QQ)$ in terms of $g$ and the stable Faltings height of $A_\QQ$.
\end{remark}

\subsection{A general finiteness result}\label{sec:generalgammastr}

Let $\order$ be an order of a number field. In this section we study more generally $\Gamma$-structures on any abelian scheme $A$ over an arbitrary connected Dedekind scheme $S$. We emphasize that now we do not assume anymore that the relative dimension of $A$ over $S$ is the degree of the number field. We shall see below that the following (non-effective) finiteness result  is a consequence of a special case of the theorem of Jordan--Zassenhaus.

\begin{lemma}\label{lem:generalgammastr}
There are at most finitely many $\Gamma$-structures on $A$.
\end{lemma}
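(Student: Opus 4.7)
The plan is to reformulate $\order$-structures on $A$ as isomorphism classes of modules over a single auxiliary order, and then invoke the Jordan--Zassenhaus theorem. First, I would reduce to the case where $S=\spec(k)$ with $k$ the function field of $S$, using the identifications in \eqref{eq:neron}, which preserve endomorphism rings. Set $R=\End(A)$, which is an order in the finite dimensional semisimple $\QQ$-algebra $R_\QQ=\End^0(A)$, and $L=\order\otimes_\ZZ\QQ$, which is a number field.

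Next, I would recast $\order$-structures on $A$ in bimodule terms. A ring morphism $\rho\colon\order\to R$ equips $R$ with the structure of a $(\order,R)$-bimodule (left action through $\rho$, right action by right multiplication), equivalently of a left module over $\Lambda=\order\otimes_\ZZ R^{\textup{op}}$. Any $\Lambda$-linear isomorphism $\phi\colon(R,\rho)\isomto(R,\rho')$ is in particular right $R$-linear, hence of the form $\phi(x)=rx$ for a unique $r=\phi(1)\in R^\times$, and left $\order$-linearity then reads $r\rho(\gamma)r^{-1}=\rho'(\gamma)$ for every $\gamma\in\order$. Consequently two morphisms $\rho,\rho'$ define the same $\order$-structure if and only if they yield isomorphic left $\Lambda$-modules; in particular, the set of $\order$-structures on $A$ injects into the set of isomorphism classes of left $\Lambda$-lattices of $\QQ$-rank $\dim_\QQ R_\QQ$.

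I would then verify the hypotheses of Jordan--Zassenhaus and conclude. Since $\order$ is a free $\ZZ$-module of finite rank and $R$ is a $\ZZ$-lattice, $\Lambda$ is a finitely generated torsion-free $\ZZ$-algebra; its rationalization $L\otimes_\QQ R_\QQ^{\textup{op}}$ is semisimple because tensoring a semisimple $\QQ$-algebra with the separable field extension $L/\QQ$ preserves semisimplicity. Thus $\Lambda$ is an order in a semisimple $\QQ$-algebra, and the Jordan--Zassenhaus theorem (see e.g.\ \cite{reiner:orders}) yields the finiteness of isomorphism classes of $\Lambda$-lattices of any bounded $\QQ$-rank; applied with the rank $\dim_\QQ R_\QQ$, this finishes the argument.

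The step I expect to be the crux is the bimodule reformulation: identifying the correct auxiliary order $\Lambda$ and checking that its isomorphism equivalence matches conjugation by $R^\times$ exactly. Once that is in place, the right $R$-action is absorbed into the definition of $\Lambda$ and the finiteness becomes a direct consequence of a classical theorem, with no further control required on the shape of $\End(A)$ since the statement imposes no relation between the relative dimension of $A$ and $[L:\QQ]$.
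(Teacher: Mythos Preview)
Your proposal is correct and follows essentially the same route as the paper: reduce via \eqref{eq:neron} to $R=\End(A)$ being an order in a semisimple $\QQ$-algebra, reinterpret $\order$-structures as $\Lambda$-lattice structures on $R$ for $\Lambda=\order\otimes_\ZZ R^{\textup{op}}$, verify that $\Lambda_\QQ\cong L\otimes_\QQ R_\QQ^{\textup{op}}$ is semisimple, and conclude by Jordan--Zassenhaus. Your identification of $\Lambda$-isomorphism with $R^\times$-conjugation is exactly the paper's key computation.
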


The scheme $S$ is a connected Dedekind scheme and hence \eqref{eq:neron} gives a ring isomorphism $\End(A)\cong \End(A_k)$ for $A_k$ the generic fiber of $A$. Since $A_k$ is an abelian variety, we obtain that  $\End(A)\cong \End(A_k)$ is a finite torsion-free $\ZZ$-algebra and $\End^0(A)\cong \End^0(A_k)$ is a semi-simple $\QQ$-algebra. 
Thus Lemma~\ref{lem:generalgammastr} follows from the next result. 

\begin{lemma}Suppose that $R$ is a finite torsion-free $\ZZ$-algebra such that $R\otimes_\ZZ\QQ$ is a semi-simple $\QQ$-algebra. Then there are at most finitely many $\Gamma$-structures on $R$.\end{lemma}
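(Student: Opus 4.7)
The plan is to reduce the statement to the Jordan--Zassenhaus finiteness theorem for lattices over $\ZZ$-orders in semi-simple $\QQ$-algebras, by reformulating $R^\times$-conjugacy classes of ring morphisms $\Gamma\to R$ as isomorphism classes of modules over a suitable enveloping order. Write $L=\Gamma\otimes_\ZZ\QQ$ and $\Omega=R\otimes_\ZZ\QQ$, so that $L$ is a number field and $\Omega$ is semi-simple by hypothesis, and set $\Lambda=\Gamma\otimes_\ZZ R^{op}$. Then $\Lambda$ is a finite torsion-free $\ZZ$-algebra that embeds as a $\ZZ$-order in $\Lambda\otimes_\ZZ\QQ=L\otimes_\QQ\Omega^{op}$.

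The key step would be to identify $R^\times$-conjugacy classes of ring morphisms $\rho\colon\Gamma\to R$ with isomorphism classes of $\Lambda$-module structures carried by the $\ZZ$-module underlying $R$ (whose rank is fixed). The right multiplication of $R$ on itself is fixed, and any $\rho$ adds a commuting left $\Gamma$-action via $\gamma\cdot x=\rho(\gamma)x$, yielding a $\Lambda$-module structure. Conversely, any $\Lambda$-linear self-map $\phi$ of $R$ is automatically right $R$-linear, hence of the form $\phi(x)=ux$ with $u=\phi(1)\in R$; invertibility of $\phi$ forces $u\in R^\times$, and the left $\Gamma$-compatibility $\phi(\rho(\gamma)x)=\rho'(\gamma)\phi(x)$, evaluated at $x=1$, gives $u\rho(\gamma)u^{-1}=\rho'(\gamma)$. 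This sets up an injection of the set of $\Gamma$-structures on $R$ into the set of isomorphism classes of $\Lambda$-lattices of $\ZZ$-rank equal to $\mathrm{rank}_\ZZ R$.

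To conclude, it remains to check that $L\otimes_\QQ\Omega^{op}$ is semi-simple so that Jordan--Zassenhaus (see \cite[$\S$26]{reiner:orders}) applies; writing $\Omega^{op}=\prod_i \uM_{n_i}(D_i)$ with each $D_i$ a division algebra of center $K_i$, the separability of $L/\QQ$ makes each $L\otimes_\QQ K_i$ a product of fields, so $L\otimes_\QQ \uM_{n_i}(D_i)$ is a product of central simple algebras. The hardest part of the proof is the module-theoretic translation in the middle paragraph; everything else is formal.
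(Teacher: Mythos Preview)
Your proof is correct and follows essentially the same route as the paper: both reduce to Jordan--Zassenhaus by forming $\Lambda=\Gamma\otimes_\ZZ R^{op}$, turning $R$ into a $\Lambda$-lattice via left multiplication by $\rho(\gamma)$ and right multiplication by $R$, and showing that isomorphic $\Lambda$-lattices force the underlying ring morphisms to be $R^\times$-conjugate (any $\Lambda$-isomorphism being left multiplication by a unit). Your verification of the semi-simplicity of $L\otimes_\QQ\Omega^{op}$ via the Wedderburn decomposition and separability of $L/\QQ$ is a bit more explicit than the paper's, but the argument is the same.
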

\begin{proof}
Composing any ring morphism $\rho:\Gamma\to R$ with left multiplication of $R$ on $R$, defines a $\Gamma$ action on $R$ which commutes with the action of $R^{op}$ on $R$ given by right multiplication, where $R^{op}$ is the opposite ring of $R$. Hence any ring morphism $\rho:\Gamma\to R$ defines a $\Lambda$-lattice structure on the free $\ZZ$-module $R$, where $\Lambda=\Gamma\otimes_\ZZ R^{op}$ is an order in the $\QQ$-algebra $\Lambda_\QQ=\Lambda\otimes_\ZZ\QQ$. Here $\Lambda_\QQ\cong L\otimes_\QQ R^{op}_\QQ$ is in fact a finite semi-simple $\QQ$-algebra, since $L=\Gamma\otimes_\ZZ \QQ$ is a number field and since the opposite algebra $R^{op}_\QQ$ of the finite semi-simple $\QQ$-algebra $R_\QQ=R\otimes_\ZZ\QQ$ is again semi-simple and finite.

Suppose that $\rho,\rho':\Gamma\to R$ define isomorphic $\Lambda$-lattice structures on $R$. Then there exists an invertible $\tau\in \End_\ZZ(R)$ such that for all $(\gamma,r)\in \Gamma\times R$ it holds that $(\rho(\gamma)\otimes r)\tau=\tau(\rho'(\gamma)\otimes r)$ inside $\End_\ZZ(R)$. Taking here $\gamma=1$ we see that $\tau$ is compatible with right multiplication of $R$ on $R$ and thus $\tau$ identifies with $\tau(1)\in R^{\times}$. On the other hand, taking here $r=1$ and evaluating at $1$ the resulting morphism in $\End_\ZZ(R)$, we obtain that $\rho=s\rho's^{-1}$ for $s=\tau(1)\in R^\times$; here we used that $\tau(\rho(\gamma)\cdot 1)=\tau(1\cdot \rho(\gamma))=\tau(1)\rho(\gamma)$ since $\tau$ is (right) $R$-compatible. Therefore  $\rho$ and $\rho'$ define the same $\Gamma$-structure on $R$. 

We conclude that  infinitely many distinct $\Gamma$-structures on $R$ would define infinitely many distinct isomorphism classes of $\Lambda$-lattices with underlying $\ZZ$-module isomorphic to $R$. But the latter set is finite by the theorem of Jordan--Zassenhaus, which implies that there can be at most finitely many distinct $\Gamma$-structures on $R$ as claimed. 
\end{proof}

The above finiteness result is not effective since its proof uses the Jordan--Zassenhaus theorem (JZ), see \cite[p.7]{mawu:factorization} for discussions of the effectivity of (JZ). However, in the proof of their factorization estimates, Masser--W\"ustholz were able to make (JZ) effective in the fundamental case when $\End^0(A)$ is a division algebra. In fact factorization estimates were used in our proof of Theorem~\ref{thm:endobound} which gives an explicit uniform version of Lemma~\ref{lem:generalgammastr} for a class of nonsimple abelian schemes $A$ and we expect that such estimates will continue to play an important role in future attempts to make Lemma~\ref{lem:generalgammastr} explicit.

\section{Proof of main results}\label{sec:proofs}

In this section we combine the results obtained in the previous sections to prove our main results stated in Theorem~\ref{thm:main} (i) and (ii). As a by product, we also obtain a proof of Proposition~\ref{prop:northcott} on the Northcott property of the height $h_\phi$.

Let $Y$ be a scheme and let $\mathcal M$ be a Hilbert moduli stack. Suppose that $Y$ is  a Hilbert moduli scheme of some presheaf $\mathcal P$ on $\mathcal M$. Then an application of \eqref{eq:forgetfuldecomp} gives that the natural forgetful map $\phi:Y\to \absg$ of the Hilbert moduli scheme  $Y=M_{\mathcal P}$ decomposes as
$$\phi:Y\to^{\phi_\alpha}M\to^{\phi_\varphi} \abomult\to^{\phi_\iota} M_{\GL_2,g}\hookrightarrow \absg.$$
Here the morphisms $\phi_\alpha$, $\phi_\varphi$, $\phi_\iota$ between the presheaves $Y$, $M$, $\abomult$, $M_{\GL_2,g}$, $\absg$ on $\sch$ are as in Section~\ref{sec:parsin}. 
Let $S$ be a connected Dedekind scheme whose function field is algebraic over $\QQ$. On considering the above decomposition of $\phi$ over $S$, we deduce 
\begin{equation}\label{eq:proofofcardbound}
|Y(S)|\leq \deg(\phi)|M_{\GL_2,g}(S)| \quad \textnormal{and} \quad \deg(\phi)\leq |\mathcal P|_S\deg(\phi_\varphi)\deg(\phi_\iota).
\end{equation} 
Here all morphisms are considered over $S$ and we used the inequality $\deg(\phi_\alpha)\leq |\mathcal P|_S$ obtained in Lemma~\ref{lem:degforgetlvl}. Moreover, on combining the above displayed decomposition of $\phi$ with results proven in previous sections, we obtain the following lemma.
\begin{lemma}\label{lem:finiteiff}
The map $\phi(S)$ is finite if and only if $\mathcal P$ is finite over $\mathcal M(S)$.
\end{lemma}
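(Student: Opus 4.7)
The plan is to exploit the factorization $\phi(S) = \phi_\iota(S) \circ \phi_\varphi(S) \circ \phi_\alpha(S)$ followed by the inclusion $M_{\GL_2,g}(S) \hookrightarrow \absg(S)$, and to establish both implications by a fiber-by-fiber analysis of the three composed maps.

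For the backward direction, suppose $|\mathcal P(x)| < \infty$ for every $x \in \mathcal M(S)$. The proof of Lemma~\ref{lem:degforgetlvl} in fact yields the fiberwise bound $|\phi_\alpha(S)^{-1}([x])| \leq |\mathcal P(x)|$, so $\phi_\alpha(S)$ has finite fibers. For $\phi_\varphi(S)$, I would invoke the polarization analysis of Lemma~\ref{lem:psipolbound}, including the remark at the end of its proof which covers the CM case that can arise when $k(S)$ is a nontrivial algebraic extension of $\QQ$; in either case the set of equivalence classes of $I$-polarizations on a given $(A,\iota)$ is finite, so the fibers of $\phi_\varphi(S)$ are finite. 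For $\phi_\iota(S)$, the argument in the proof of Corollary~\ref{cor:forgetendo} identifies each fiber over $[A]$ with the set of $\OL$-structures on $A$, which is finite by the Jordan--Zassenhaus result Lemma~\ref{lem:generalgammastr}. Composing these three finiteness statements gives that $\phi(S)$ has finite fibers.

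For the forward direction, suppose $\phi(S)$ has finite fibers. Given $x \in \mathcal M(S)$ with $\mathcal P(x) \neq \emptyset$, the fiber $\phi_\alpha(S)^{-1}([x])$ maps into the single fiber of $\phi(S)$ over $[A]$, where $A$ is the abelian scheme underlying $x$, and is therefore finite. I would then analyze the canonical surjection $\mathcal P(x) \twoheadrightarrow \phi_\alpha(S)^{-1}([x])$ sending $\alpha \mapsto [(x,\alpha)]$, whose fibers are precisely the orbits of $\Aut_{\mathcal M(S)}(x)$ acting on $\mathcal P(x)$ through $\mathcal P$. Fixing any $\lambda_0 \in I_+$, the group $\Aut_{\mathcal M(S)}(x)$ embeds into the automorphism group of the polarized abelian scheme $(A,\varphi(\lambda_0))$, which is classically finite, so $|\mathcal P(x)| \leq |\Aut_{\mathcal M(S)}(x)| \cdot |\phi_\alpha(S)^{-1}([x])| < \infty$. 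When $\mathcal P(x)$ is empty there is nothing to prove, and so $\mathcal P$ is finite over $\mathcal M(S)$.

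The main non-formal ingredient, and the step I expect to require the most care, is the finiteness of the fibers of $\phi_\iota(S)$: this rests on the Jordan--Zassenhaus theorem via Lemma~\ref{lem:generalgammastr}, which is exactly where effectivity is lost and which explains the qualitative (as opposed to quantitative) nature of the present statement. The remaining ingredients are either formal consequences of the decomposition \eqref{eq:forgetfuldecomp} or have already been set up explicitly in Sections~\ref{sec:polarizations}--\ref{sec:endo}.
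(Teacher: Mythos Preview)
Your proposal is correct and follows essentially the same approach as the paper: both directions use the decomposition $\phi = \phi_\iota \phi_\varphi \phi_\alpha$, with the backward direction combining finiteness of $\phi_\varphi(S)$ via Corollary~\ref{cor:forgetpol}/Lemma~\ref{lem:psipolbound} and of $\phi_\iota(S)$ via Lemma~\ref{lem:generalgammastr}, and the forward direction reducing to finiteness of $\Aut_{\mathcal M(S)}(x)$. The only cosmetic difference is that the paper obtains the latter finiteness abstractly from the fact that $\mathcal M$ is a Deligne--Mumford stack and $S$ is quasi-compact, whereas you give the concrete embedding into the automorphism group of the polarized abelian scheme $(A,\varphi(\lambda_0))$---which is of course the underlying reason the diagonal of $\mathcal M$ is unramified in the first place.
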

\begin{proof}
We first assume that the presheaf $\mathcal P$ is finite over $\mathcal M(S)$. By construction via \eqref{quadriso} the fiber of $\phi_\alpha(S):Y(S)\to \hilbmod(S)$ over any $[x]\in \hilbmod(S)$ identifies with the set of isomorphism classes formed by $(x,\alpha)$ with $\alpha\in \mathcal P(x)$, and the set $\mathcal P(x)$ is finite by assumption. This implies that the map $\phi_\alpha(S)$ is finite. We now use that $S$ is a connected Dedekind scheme whose function field is algebraic over $\QQ$. Then the maps $\phi_\varphi(S)$ and $\phi_\iota(S)$ are finite by the proof of Corollary~\ref{cor:forgetpol} and by Lemma~\ref{lem:generalgammastr} respectively.  Thus $\phi=\phi_\iota\phi_\varphi\phi_\alpha$ is a composition of maps which are finite over $S$ and hence $\phi(S)$ is finite.

To prove the converse, we use that $\mathcal M$ is a Deligne--Mumford stack and that $S$ is quasi-compact. This assures that the automorphism group of any object in $\mathcal M(S)$ is finite. 
We now assume that $\mathcal P$ is not finite over $\mathcal M(S)$. Then there exists $x\in\mathcal M(S)$ with $\mathcal P(x)$ infinite. The fiber of $\phi_\alpha(S)$ over $[x]\in\hilbmod(S)$ identifies with the set of classes formed by the infinitely many pairs $(x,\alpha)$ with $\alpha$ in the infinite set $\mathcal P(x)$, and these infinitely many pairs form infinitely many classes since the automorphism group of $x$ is finite. This implies that $\phi_\alpha$ and $\phi=\phi_\iota\phi_\varphi\phi_\alpha$ are not finite over $S$. We conclude the converse: If $\phi(S)$ is finite, then $\mathcal P$ is finite over $\mathcal M(S)$. This completes the proof of the lemma.  
\end{proof}

The above lemma is crucial for our proof of Proposition~\ref{prop:northcott} on the Northcott property, but it will not be used in the following proof of  Theorem~\ref{thm:main}.

\subsection{Proof of Theorem~\ref{thm:main}}
We  continue our notation. As in Theorem~\ref{thm:main}, we assume that $S\subseteq \spec(\ZZ)$ is nonempty open, that $Y$ is a variety over $\ZZ$ and  that there is a nonempty open $T\subseteq \spec(\ZZ)$ such that $Y_T=M_{\mathcal P}$ is a Hilbert moduli scheme of some presheaf $\mathcal P$ on $\mathcal M$. 

\begin{proof}[Proof of Theorem~\ref{thm:main}]
We first show that $Y(S)\subseteq Y_T(U)$ where $U=T\cap S$. The schemes $U$ and $S$ are nonempty open subschemes of the irreducible scheme $B=\spec(\ZZ)$. Thus $U\subseteq S$ is a dense open subscheme, which implies that $Y(S)\subseteq Y(U)$ since $Y$ is separated over $B$. Further it holds that $Y(U)\cong Y_T(U)$, since $U\subseteq T$ are both open subschemes of $B$ and $B$ is the terminal object in $\sch$. It follows that $Y(S)\subseteq Y_T(U)$ as desired. 

We now prove (i). On recalling the construction of the height $h_\phi$ on $Y(S)$ appearing in (i), we see that this height is the restriction to the subset $Y(S)\subseteq Y_T(U)$ of the height $h_\phi$ on $Y_T(U)$ defined in \eqref{def:height}. The latter height is the pullback of the stable Faltings height $h_F$ along the forgetful map $\phi:Y_T\to \absg$  of the Hilbert moduli scheme $Y_T=M_{\mathcal P}$, and the forgetful map $\phi$ factors through $M_{\GL_2,g}$ by \eqref{eq:forgetfuldecomp}. Therefore any point $P$ in $Y(S)\subseteq Y_T(U)$ satisfies $h_\phi(P)\leq \sup h_F(A)$ with the supremum taken over all $A\in M_{\GL_2,g}(U)$. Then, on applying the effective Shafarevich conjecture in \eqref{eq:esgl2} with the nonempty open subscheme $U\subseteq \spec(\ZZ)$, we obtain for all $A\in M_{\GL_2,g}(U)$ an upper bound for $h_F(A)$ in terms of $N_U$ and $g$ which together with $h_\phi(P)\leq \sup h_F(A)$ directly proves  (i).

To show the upper bound for $|Y(S)|$ claimed in (ii), we use the inclusion $Y(S)\subseteq Y_T(U)$ and we bound $|Y_T(U)|$. The scheme $U$ is a connected Dedekind scheme with function field $\QQ$. Therefore Corollary~\ref{cor:forgetpol} gives that $\deg(\phi_\varphi)\leq 2^g$ over $U$ and then an application of \eqref{eq:proofofcardbound} with the forgetful map $\phi$ of the Hilbert moduli scheme $Y_T=M_{\mathcal P}$ implies  $$|Y_T(U)|\leq \deg(\phi)|M_{\GL_2,g}(U)| \quad \textnormal{and} \quad \deg(\phi)\leq 2^g|\mathcal P|_U\deg(\phi_\iota)$$ 
over $U$; here $\deg(\phi_\iota)=1$ when $g=1$. To obtain bounds for $|M_{\GL_2,g}(U)| $ and $\deg(\phi_\iota)$ over $U$, we apply with $S=U$ the quantitative Shafarevich conjecture \eqref{eq:qesgl2} and Corollary~\ref{cor:forgetendo} respectively. These bounds together with the above displayed inequalities lead to 
\begin{equation}\label{eq:mainthmii}
|Y(S)|\leq |Y_T(U)|\leq |\textnormal{Pic}(\OL)||\mathcal P|_U (4g)^{(8g)^7}N_U^{(12g)^5},
\end{equation}
where $\OL$ denotes the ring of integers appearing in the definition of $\mathcal M$. This implies the bound for $|Y(S)|$ claimed in (ii) and hence completes the proof of Theorem~\ref{thm:main}. 
\end{proof}

We remark that one can improve up to a certain extent the exponents in Theorem~\ref{thm:main}~(i) and \eqref{eq:mainthmii} without introducing substantial new ideas. For example, one can go into the proofs of \cite{rvk:modularhms} in which the explicit inequalities \eqref{eq:esgl2}, \eqref{eq:qesgl2} and \eqref{eq:mindegboundshaf} were shown.

On the other hand, substantial new ideas are required to generalize to arbitrary number fields $K$ our explicit Theorem~\ref{thm:main}. For instance, in our proofs over $K=\QQ$ we crucially exploit a geometric version of modularity based inter alia on the Tate conjecture, while for an arbitrary $K$ such a geometric version of modularity is not available in general. However, Diophantine applications of modularity are certainly not restricted to $K=\QQ$: On using  modularity results for elliptic curves, 
several authors obtained Diophantine applications over certain $K\neq \QQ$; see for example Jarvis--Meekin~\cite{jame:fermat}, Freitas--Siksek~\cite{frsi:fermat56}, Pasten~\cite{pasten:shimabc} and Freitas--Kraus--Siksek~\cite{frkrsi2:asymfermat} and the references therein.

\subsection{Proof of Proposition~\ref{prop:northcott}}
We continue our notation. Let $K$ be a number field field with ring of integers $\mathcal O_K$. As in Proposition~\ref{prop:northcott}, we assume that $Y$ is a variety over $\mathcal O_K$ and we suppose that $Y$ is a Hilbert moduli scheme of some presheaf $\mathcal P$ on $\mathcal M$ such that $\mathcal P$ is finite over $\mathcal M(\bar{K})$. Let $\phi=\phi_\mathcal P$ be the forgetful map of $Y=M_{\mathcal P}$ defined in \eqref{def:forgetfulmap} and write $f=\phi(\bar{K})$.

\begin{proof}[Proof of Proposition~\ref{prop:northcott}]The presheaf $\mathcal P$ is finite over $\mathcal M(\bar{K})$ by assumption, and $S=\spec(\bar{K})$ is a connected Dedekind scheme whose function field $\bar{K}$ is algebraic over $\QQ$. Therefore Lemma~\ref{lem:finiteiff}  gives that the forgetful map $f:Y(S)\to \absg(S)$ has finite fibers. Let $d\in\ZZ$ be positive and let $c>0$ be a real number. Denote by $\mathcal B$ the set of points $P\in Y(\bar{K})$ such that $[K(P):K]\leq d$ and such that $h_\phi(P)\leq c$. The forgetful map $\phi:Y\to \absg$ is a morphism of presheaves on $\sch$. Hence, for any $P\in Y(\bar{K})$, the class $f(P)$ contains an abelian variety over $\bar{K}$ which is the base change of an abelian variety over $K(P)$. Thus  the classes in $f(\mathcal B)$ are generated by abelian varieties $A$ over $\bar{K}$ of dimension $g$ such that $h_F(A)\leq c$ and such that $A$ is the base change of an abelian variety defined over the number field $K(P)$ which has degree $[K(P):\QQ]\leq d[K:\QQ]$. Hence the Northcott property \cite[p.169]{fach:deg} of $h_F$ on $\absg(\bar{\QQ})\cong \absg(\bar{K})$ implies that $f(\mathcal B)$ is finite. This shows that $f^{-1}(f(\mathcal B))$ is finite since $f$ has finite fibers, and then $\mathcal B\subseteq f^{-1}(f(\mathcal B))$ is finite. We conclude that $h_\phi$ has the Northcott property. This completes the proof of Proposition~\ref{prop:northcott}.   
\end{proof}

\subsection{Proof of Corollary~\ref{cor:p1n}}\label{sec:proofp1n}

We continue our notation. In this section we deduce Corollary~\ref{cor:p1n} in which we consider the presheaf $\mathcal P_1(\mathfrak n)$ on $\mathcal M$ defined in \eqref{def:p1n} for any ideal $\mathfrak n\subseteq \OL$ of norm $n\geq 1$.

\begin{proof}[Proof of Corollary~\ref{cor:p1n}]
Write $\mathcal P=\mathcal P_1(\mathfrak n)$ and  $T=\spec(\ZZ[1/n])$. By assumption the restriction of the presheaf $\mathcal P$ on $\mathcal M$ to the open substack $\mathcal M_{T}$ is representable by an object $y$ of $\mathcal M_{T}(Y_T)$. In particular $Y_T=M_{\mathcal P'}$ is a Hilbert moduli scheme of the (naive extension) presheaf $\mathcal P'$ on $\mathcal M$ represented by the object $y$ of $\mathcal M$.  The two presheaves $\mathcal P'$ and $\mathcal P$ on $\mathcal M$ coincide over $\mathcal M_{T}$, and the nonempty open subscheme $U=T\cap S$ of $\spec(\ZZ)$ satisfies $N_U\leq \prod p$ with the product taken over all rational primes $p\mid nN_S$. Thus an application of Theorem~\ref{thm:main} with $Y$, $T$ and $\mathcal P'$ directly implies the height bound stated in (i). Further, Lemma~\ref{lem:geomlvlred} gives that $|\mathcal P'|_U\leq |\mathcal P'|_\CC=|\mathcal P|_\CC$, and it follows from  \eqref{def:p1n} that $|\mathcal P|_\CC\leq n^{2g}$. Then the slightly more precise version of Theorem~\ref{thm:main}~(ii) obtained in \eqref{eq:mainthmii} leads to the upper bound for the number of $S$-points stated in (ii). This completes the proof. 
\end{proof}

\section{A finiteness result for arbitrary number fields}\label{sec:generalfin}

Let $K$ be a number field field with ring of integers $\mathcal O_K$, let  $Y$ be a variety over $B=\spec(\mathcal O_K)$ and let $S\subseteq B$ be nonempty open. As discussed, substantial new ideas are required to generalize to a general $K$ the explicit results for $Y(S)$ obtained in Theorem~\ref{thm:main} when $K=\QQ$. However, in this section we show how to modify the proof of Theorem~\ref{thm:main} in order to obtain the following finitess result for $Y(S)$ when $K$ is a general number field:  

\begin{proposition}\label{prop:fin}
The set $Y(S)$ is finite if there is a nonempty open $T\subseteq B$ such that $Y_T$ is a Hilbert moduli scheme of a presheaf $\mathcal P$ on a Hilbert moduli stack  with $|\mathcal P|_{S\cap T}<\infty$. 
\end{proposition}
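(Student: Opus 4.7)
The plan is to follow the roadmap of the proof of Theorem~\ref{thm:main}, but replace the explicit Shafarevich bounds \eqref{eq:esgl2} and \eqref{eq:qesgl2} by the qualitative unpolarized Shafarevich conjecture of Faltings~\cite{faltings:finiteness} and Zarhin~\cite{zarhin:shafarevich}, which is available over an arbitrary number field.

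First I would reduce to showing that $|Y_T(U)|$ is finite, where $U=S\cap T$. Since $B=\spec(\mathcal O_K)$ is irreducible, $U\subseteq S$ is a dense open subscheme; and because $Y$ is separated over $B$, restriction to the dense open $U$ gives an injection $Y(S)\hookrightarrow Y(U)\cong Y_T(U)$, exactly as in the first paragraph of the proof of Theorem~\ref{thm:main}. In particular, it suffices to prove finiteness of $Y_T(U)$.

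Next I would apply the Par\v{s}in decomposition \eqref{eq:forgetfuldecomp} of the natural forgetful map $\phi=\phi_{\mathcal P}$ of the Hilbert moduli scheme $Y_T=M_{\mathcal P}$. The scheme $U$ is a nonempty open subscheme of $\spec(\mathcal O_K)$, hence a connected Dedekind scheme whose function field $K$ is algebraic over $\QQ$. Thus Lemma~\ref{lem:finiteiff} applies directly: since $|\mathcal P|_U=|\mathcal P|_{S\cap T}<\infty$ by hypothesis, the presheaf $\mathcal P$ is finite over $\mathcal M(U)$, and hence the map $\phi(U):Y_T(U)\to\absg(U)$ has finite fibers. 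Concretely this is a combination of three inputs that are all valid over $U$: Lemma~\ref{lem:degforgetlvl} bounds the fibers of $\phi_\alpha$ by $|\mathcal P|_U<\infty$; the general form of Lemma~\ref{lem:psipolbound} (noted at its end) bounds the fibers of $\phi_\varphi$ over any connected Dedekind scheme whose function field is algebraic over $\QQ$; and Lemma~\ref{lem:generalgammastr}, which uses the Jordan--Zassenhaus theorem, bounds the fibers of $\phi_\iota$ over any connected Dedekind scheme.

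It then remains only to observe that $\absg(U)$ is finite, which is precisely the unpolarized Shafarevich conjecture proven by Faltings and Zarhin: the set of isomorphism classes of abelian schemes of fixed relative dimension $g$ over a ring of $S$-integers in an arbitrary number field is finite. Combining finiteness of $\absg(U)$ with the finite-fiber statement above expresses $Y_T(U)$ as a finite union of finite sets, so $|Y_T(U)|<\infty$ and hence $|Y(S)|<\infty$.

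There is no serious obstacle: the qualitative nature of Faltings--Zarhin lets us sidestep every place where the proof of Theorem~\ref{thm:main} uses that $k(U)=\QQ$. The genuinely delicate step is hidden in Lemma~\ref{lem:generalgammastr}, whose current proof via Jordan--Zassenhaus is ineffective — this is exactly why Proposition~\ref{prop:fin} is only a finiteness statement and not an explicit bound; making it explicit over a general $K$ would require, among other things, an effective version of (JZ) analogous to Theorem~\ref{thm:endobound} and an effective Shafarevich-type input over $K$, neither of which is available in the required generality.
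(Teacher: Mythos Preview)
Your proposal is correct and essentially identical to the paper's own proof: reduce to $U=S\cap T$, apply Lemma~\ref{lem:finiteiff} (via Lemma~\ref{lem:generalgammastr}) to get that $\phi(U)$ has finite fibers, and conclude via the unpolarized Shafarevich conjecture of Faltings--Zarhin. The only cosmetic difference is that the paper writes $Y(U)=\Hom_B(U,Y)\subseteq\Hom_{\sch}(U,Y_T)$ rather than $Y(U)\cong Y_T(U)$, since for $B=\spec(\mathcal O_K)$ the justification ``$B$ is terminal in $\sch$'' from the proof of Theorem~\ref{thm:main} that you cite no longer applies verbatim --- but an inclusion is all you need.
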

We shall see below that many interesting cases of this finiteness result are already in the literature; although the statements are formulated in a different way and the translation usually requires some work. The presheaf $\mathcal P$ in Proposition~\ref{prop:fin} satisfies\footnote{This follows from Lemma~\ref{lem:geomlvlred}, since $S\cap T$ is nonempty open in the integral scheme $T$ with function field $k(T)=K$ and since $Y_T=M_{\mathcal P}$ is a variety (over the affine scheme $T$ which is of finite type) over $\ZZ$.} $|\mathcal P|_{S\cap T}\leq |\mathcal P|_{F}$ for any field $F$ such that $K$ embeds into $F$, and for those moduli problems  $\mathcal P$ of interest in arithmetic it is usually possible  to show that $|\mathcal P|_{\CC}<\infty$ via  geometric arguments. 

To obtain Proposition~\ref{prop:fin}, it suffices to modify our proof of Theorem~\ref{thm:main} as follows: We can not anymore apply the quantitative Shafarevich conjecture for $M_{\GL_2,g}$ in \eqref{eq:qesgl2} and the explicit Theorem~\ref{thm:endobound}, since they both assume that $K=\QQ$. Instead we use the Shafarevich conjecture for unpolarized abelian varieties over any $K$ proven by Faltings--Zarhin~\cite{faltings:finiteness,zarhin:shafarevich} and we apply Lemma~\ref{lem:generalgammastr} based on the Jordan--Zassenhaus theorem.

\begin{proof}[Proof of Proposition~\ref{prop:fin}]
First, we observe that $U=S\cap T$ is a connected Dedekind scheme whose function field is $K$, since $U$ is a nonempty open subscheme of $B=\spec(\OL_K)$. Our assumption $|\mathcal P|_{U}<\infty$ implies that $\mathcal P$ is finite over $\mathcal M(U)$ for $\mathcal M$ the involved Hilbert moduli stack. Then the natural forgetful map $\phi:Y_T=M_\mathcal P\to \absg$ is finite over $U$ by  Lemma~\ref{lem:finiteiff} which is based on Lemma~\ref{lem:generalgammastr}. Moreover, an application of the unpolarized Shafarevich conjecture \cite[Thm 1]{zarhin:shafarevich} with the open $U\subseteq B$ gives that $\absg(U)$ is finite. Thus the set $\Hom_{\sch}(U,Y_T)$ is finite, since it is the preimage of $\absg(U)$ under the finite map $\phi(U)$.  Next, on using that $U\subseteq S$ are both nonempty open in the irreducible $B$ and that $Y$ is separated over $B$, we obtain that $Y(S)\subseteq Y(U)$. Then $Y(S)$ is finite, since $U\subseteq T$ and thus $Y(U)=\Hom_{B}(U,Y)$ is contained in the finite set $\Hom_{\sch}(U,Y_T)$.
\end{proof}

We now briefly discuss related finiteness results. For each $Y$ and $S$ as above, finiteness of $Y(S)$  follows directly from the polarized Shafarevich conjecture \cite[Satz 6]{faltings:finiteness} when there exist a number field $K'$, an open $S'\subseteq\spec(\mathcal O_{K'})$ and a finite map of sets
$$
Y(S)\to A_{g,d}(S').
$$
For example, Deligne--Szpiro and Ullmo used \'etale covers and other tools in order to construct such finite maps $Y(S)\to A_{g,d}(S')$ in the following two cases: $Y_\CC$ is a compact quotient of the Siegel space by a discrete group without torsion (see \cite[Rem 5.3]{szpiro:faltings}) and $Y_\CC=\Gamma\backslash X^+$ is an adjoint (connected) Shimura variety $(X,G)$ of abelian type with $\Gamma\subset G(\QQ)$ neat (see\footnote{The precise statement requires additional assumptions on the integral model $Y$.} \cite[Thm 3.2]{ullmo:ratpoints}). In particular, in these cases Deligne--Szpiro and Ullmo obtained finiteness of $Y(S)$ and then finiteness of  $Y(K)=Y(S)$ when $Y$ is in addition projective over $B$. These finiteness results of Deligne--Szpiro and Ullmo are quite remarkable, since for higher dimensional varieties it is very difficult to obtain finiteness results for $Y(S)$ or even $Y(K)$; see for example Corvaja--Zannier~\cite{coza:intpointssurfaces} and Levin~\cite{levin:siegelpicard} for some results using a very different method based on Diophantine approximations.

We point out that one can use the fairly general result \cite[Thm 3.2]{ullmo:ratpoints} to deduce Proposition~\ref{prop:fin} for many $Y$ of interest. However, there are also many interesting $Y$ for which the finiteness result of $Y(S)$ obtained in Proposition~\ref{prop:fin} is new; note that neatness implies representability of the moduli problem but the converse is false in general. Finally, we mention that in general, that is for each $Y$ and $S$ as in Proposition~\ref{prop:fin}, it is not clear to us how to obtain a map $f:Y(S)\to A_{g,d}(S')$ for which one can directly verify the finiteness of $f$. In light of this, we avoided the approach (used by Deligne--Szpiro and Ullmo) involving $A_{g,d}$ and instead we proved Proposition~\ref{prop:fin} by combining the strategy of Theorem~\ref{thm:main} with the unpolarized Shafarevich conjecture proven by Faltings and Zarhin.

\newpage

{\scriptsize
\bibliographystyle{amsalpha}
\bibliography{../../../literature}
}

\vspace{5cm}

\vspace{0.1cm}
\noindent Rafael von K\"anel, IAS Tsinghua University, Beijing

\noindent E-mail address: {\sf rafaelvonkanel@gmail.com}

\vspace{1cm}

\noindent Arno Kret, University of Amsterdam, Amsterdam

\noindent E-mail address: {\sf  arnokret@gmail.com}

\end{document}